\documentclass{amsart}
\usepackage{titletoc}
\usepackage{graphicx}
\usepackage{amsmath}
\usepackage{amssymb}
\usepackage{color}
\usepackage{mathdots}
\usepackage{optidef}
\usepackage{amsfonts}
\usepackage{mathrsfs}
\usepackage{hyperref}
\usepackage{kbordermatrix}
\usepackage{mathtools}

\contentsmargin{0pt}

\vfuzz2pt 
\hfuzz2pt 
\newtheorem{thm}{Theorem}[section]
\newtheorem{cor}[thm]{Corollary}

\newtheorem{lem}[thm]{Lemma}

\theoremstyle{definition}
\newtheorem{defn}[thm]{Definition}
\theoremstyle{remark}
\newtheorem{rem}[thm]{Remark}
\theoremstyle{definition}

\newtheorem{example}[thm]{Example}

\numberwithin{equation}{section}

\newcommand{\sA}{\mathscr{A}}

\newcommand{\cI}{\mathcal{I}}

\newcommand{\cA}{\mathcal{A}}
\newcommand{\cB}{\mathcal{B}}
\newcommand{\cC}{\mathcal{C}}

\newcommand{\cM}{\mathcal{M}}
\newcommand{\cN}{\mathcal{N}}

\newcommand{\cP}{\mathcal{P}}
\newcommand{\cZ}{\mathcal{Z}}

\newcommand{\cV}{\mathcal{V}}

\newcommand{\RR}{\mathbb{R}}

\newcommand{\CC}{\mathbb{C}}
\newcommand{\NN}{\mathbb{N}}

\newcommand{\wt}{\widetilde}

\newcommand{\col}{{\rm col}}
\newcommand{\row}{{\rm row}}
\newcommand{\rank}{{\rm rank \,}}
\newcommand{\card}{{\rm card \,}}
\newcommand{\supp}{{\rm supp \,}}

\begin{document}
\title[The indefinite truncated multidimensional moment problem]{On a minimal solution for the indefinite truncated multidimensional moment problem} 
\keywords{Truncated moment problem, Pontryagin spaces}
\subjclass[2010]{47A57 (46C20)} 
\author[DP Kimsey]{David P. Kimsey}
\address{School of Mathematics, Statistics and Physics\\
Newcastle University\\
Newcastle upon Tyne NE1 7RU UK}
\email{david.kimsey@ncl.ac.uk}

\begin{abstract} 

We will consider the indefinite truncated multidimensional moment problem. Necessary and sufficient conditions for a given truncated multisequence to have a signed representing measure $\mu$ with $\card \supp \mu$ as small as possible are given by the existence of a rank preserving extension of a multivariate Hankel matrix (built from the given truncated multisequence) such that the corresponding associated polynomial ideal is real radical. This result is a special case of a more general characterisation of truncated multisequences with a minimal complex representing measure whose support is symmetric with respect to complex conjugation (which we will call {\it quasi-complex}). One motivation for our results is the fact that positive semidefinite truncated multisequence need not have a positive representing measure. Thus, our main result gives the potential for computing a signed representing measure $\mu = \mu_+ - \mu_-$, where $\card \mu_-$ is small. We illustrate this point on concrete examples.



\end{abstract}

\maketitle

\tableofcontents

\section{Introduction}
\label{sec:INTRO}

In this paper, we will investigate the {\it indefinite truncated multidimensional moment problem}. Given a real-valued truncated multisequence $s = (s_{\gamma})_{\stackrel{0 \leq |\gamma| \leq m}{\gamma \in \NN_0^d} }$, we wish to find necessary and sufficient conditions on $s$ for the existence of a signed measure $\mu$ on $\RR^d$, with convergent moments, such that
\begin{equation}
\label{eq:RM}
s_{\gamma} = \int_{\RR^d} x^{\gamma} d\mu(x) := \int \cdots \int_{\RR^d} \prod_{j=1}^d x_j^{\gamma_j} d\mu(x_1, \ldots, x_d)
\end{equation}
for all $0 \leq |\gamma| \leq m$
and
\begin{equation}
\label{eq:MINIMALITY}
\text{$\card \supp \mu$ is as small as possible.}
\end{equation} 
If \eqref{eq:RM} holds, then $\mu$ is called a {\it signed representing measure} for $s$. If \eqref{eq:RM} and \eqref{eq:MINIMALITY} are in force, then $\mu$ is called {\it minimal signed representing measure for $s$}. We will also be interested in the case when the representing measure $\mu$ is supported on $\CC^d$ with the requirement that
$$z \in \supp \mu \Longleftrightarrow \bar{z} \in \supp \mu.$$
In this case, \eqref{eq:RM} takes the form
\begin{equation}
\label{eq:QCM}
s_{\gamma} = \int_{\CC^d} z^{\gamma} d\mu(z) := \int \cdots \int_{\CC^d} \prod_{j=1}^d z_j^{\gamma_j} d\mu(z_1, \ldots, z_d)  \quad \quad {\rm for} \quad 0 \leq |\gamma | \leq m.
\end{equation}
In this case, $\mu$ is called a {\it quasi-complex representing measure} for $s$.

It is well-known that a truncated multisequence $s = (s_{\gamma})_{0 \leq |\gamma| \leq m}$ always has a signed representing measure. Indeed, one can always choose points $$w^{(1)}, \ldots, w^{( \ell )} \in \RR^d,$$ 
where
$$\ell := \ell(m,d) := \binom{m+d}{d},$$
such that the Vandermonde matrix $V$ based on $w^{(1)}, \ldots, w^{( \ell )}$ and the indexing set $\{ \lambda \in \NN_0^d: 0 \leq |\lambda | \leq m \}$ (see Definition \ref{def:VM}) is invertible. Consequently, $\varrho_1, \ldots, \varrho_{\ell}$ are given by
\begin{equation}
\label{eq:RHOS}
 {\rm col}(\varrho_a)_{a=1}^{ \ell } = (V^T)^{-1} {\rm col}(s_{\gamma})_{0 \leq |\gamma| \leq m}
\end{equation}
and $\mu = \sum_{a=1}^{\ell} \varrho_a \delta_{w^{(a)}}$ is a signed representing measure for $s$. Thus, the indefinite truncated moment problem is nontrivial if the representing measure $\mu$ satisfies
$$\card \supp \mu < \ell(m,d).$$

We shall see that if we let $M(n)$ be the $d$-Hankel matrix based (see Definition \ref{def:Feb13dH}) on $s = (s_{\gamma})_{0 \leq |\gamma| \leq 2n}$ and $M(n)$ (which is not necessarily positive semidefinite) be the corresponding multivarite Hankel matrix built from $s$, then $s$ has a minimal signed representing measure (i.e., $r$-atomic, where $r = \rank M(n)$) if and only if $M(n)$ has a rank preserving extension $M(n+1)$ such that {\it variety of $M(n+1)$} (see Definition \ref{def:Variety}) satisfies
\begin{equation}
\label{eq:CCs}
\card \cV(M(n+1)) = r
\end{equation}
and
\begin{equation}
\label{eq:Varietycondition}
\cV(M(n+1)) \subseteq \RR^d
\end{equation}
(see Theorem \ref{thm:Feb7m1} for a precise statement). It turns out that \eqref{eq:CCs} is equivalent to checking that the certain shift matrices which are determined by the extension are diagonalisable. It is also worth noting that in the particular case that the extension is positive semidefinite, then \eqref{eq:CCs} and \eqref{eq:Varietycondition} hold automatically and the corresponding minimal signed representing measure will be positive and we recover the celebrated {\it flat extension theorem} of Curto and Fialkow that began in an equivalent complex moment setting in \cite{CF_memoir} (and has been further investigated in \cite{CF_memoir, CF_analytic, CF_RGR, CF_quadratic, CF_quartic, CF_THMP,CF_several, Fialkow_FV, F_CV, FN_H, CF_bivariate, F_Parallel, F_Limits, F_Survey, F_CVariety}). For recent approaches to the truncated multidimensional moment problem which are independent of the flat extension theorem, see, e.g., the {\it core variety} approach of Fialkow ((\cite{Fialkow-CV}) and Blekherman and Fialkow (\cite{blekherman:2018}) and also an LMI type condition given by the author and Putinar \cite{KP}.

We shall also see that if we enlarge the class of representing measures to include complex measures $\mu$ such that
\begin{equation}
\lambda \in \supp \mu \Longleftrightarrow \bar{\lambda} \in \supp \mu, 
\end{equation}
then $s$ has a minimal quasicomplex representing measure if and only if $M(n)$ has a rank preserving extension such that
$$\card \cV(M(n+1)) = r$$
(see Theorem \ref{thm:Feb7m1} for a precise statement).

\begin{rem}
\label{rem:CMP}
In principle, the main results (formulated precisely in Theorem \ref{thm:Feb7m1}), can be formulated and proved in a complex truncated multidimensional moment problem setting. We will not pursue this here. 
\end{rem}

As far as we know, indefinite moment problems have been investigated in a univariate setting with the requirement that the representing measure $\mu$ has a Jordan decomposition $\mu = \mu_+ - \mu_-$ with 
$$\card \supp \mu_- = \kappa_-$$ 
for some given $\kappa_- \geq 0$ (see, e.g., Krein and Langer \cite{KL}, Berg, Christensen and Maverick \cite{BCM} and Derkach, Hassi and de Snoo \cite{DHdS1}, in a trigonometric moment problem setting (see, e.g., Bakonyi and Lopushanskaya \cite{BL} and also Bakonti and Woerdeman's book \cite{BW}) and also an abstract multidimensional semigroup setting (see, e.g., \cite{BS} and the book of Sasv\'ari \cite{Sasvari}). However, the problem of obtaining condition(s) for the existence of a minimal representing measure for a truncated multidimensional moment sequence seems to have been left open for quite some time.

\noindent {\bf Motivation}

Given a truncated multisequence $s = (s_{\gamma})_{0 \leq |\gamma| \leq 2n}$, it is well known that the positivity of the $d$-Hankel matrix $M(n)$ based on $s$ does not imply the existence of a {\it positive} representing measure for $s$ (see, e.g., Example 3.11 in \cite{Fialkow_FV}). However, Theorem \ref{thm:Feb7m1} provides one with a tool for computing a signed representing measure $\mu = \mu_+ - \mu_-$ for $s$, where
$$\text{$\card \mu_-$ is ``small'' relative to $\rank M(n)$.}$$
In this care, $\mu = \mu_+ - \mu_-$ may be an acceptable substitute for a positive representing measure for $s$.

The paper is organised as follows. In Section \ref{sec:MHM}, we will provide basic results for $d$-Hankel matrices. In Section \ref{sec:QCM}, we will introduce a seemingly new class of representing measures which we will call quasi-complex. In Section \ref{sec:Pontryagin}, we will construct a Pontryagin space associated with a multisequence whose infinite $d$-Hankel matrix has finite rank. In Section \ref{sec:full}, we will produce an integral representation for a multisequence whose infinite $d$-Hankel matrix has finite rank. In Section \ref{sec:RPE}, we will prove a number of key results on extensions of $d$-Hankel matrices for future use. In Section \ref{sec:CFana}, we will formulate and prove both indefinite analogues of Curto and Fialkow's flat extension theorem described above. In Section \ref{sec:Ex}, we will compute a $12$ atomic quasicomplex and a $15$ atomic signed representing measure for a bivariate degree $6$ sequence such that $M(3)$ is positive semidefinite yet the sequence does not have a positive representing measure which appeared in \cite{Fialkow_FV}. Finally, in Appendix \ref{app:P}, we will provide some requisite background on Pontryagin spaces.

For the convenience of the reader we have compiled a list of commonly used notation that appears throughout the paper.

\smallskip

\noindent {\bf Notation}

\smallskip

\noindent $\RR^{\ell \times \ell}$ and $\CC^{\ell \times \ell}$ denote the vector spaces of real and complex matrices of size $\ell \times \ell$, respectively

\smallskip

\noindent $\RR^{\ell} := \RR^{\ell \times 1}$ and $\CC^{\ell} :=\CC^{\ell \times 1}$

\smallskip

\noindent {\bf 0} denotes the zero vector in $\CC^{\ell}$.

\smallskip

\noindent $A^*$ and $A^T$ denote the conjugate transpose and transpose, respectively, of a matrix $A \in \CC^{\ell \times \ell}$

\smallskip

\noindent $\cC_A$ and $\cN_A$ denote the column space and null space, respectively, of a matrix $A \in \CC^{\ell \times \ell}$

\noindent $i_{\pm}(A)$ denote the number of positive and negative eigenvalues (counting algebraic multiplicity), respectively, of a Hermitian matrix $A$

\smallskip

\noindent $\sigma(A)$ denotes the spectrum of a matrix $A \in \CC^{\ell \times \ell}$

\smallskip

\noindent $A \succeq 0$ if $A$ is positive semidefinite

\smallskip

\noindent $\col(c_{\lambda})_{\lambda \in \Lambda}$ and $\row(c_{\lambda})_{\lambda \in \Lambda}$ denote the column and row vectors with entries $(c_{\lambda})_{\lambda \in \Lambda}$, respectively.

\noindent $\NN_0^d$, $\RR^d$ and $\CC^d$ denote the set of $d$tuples of nonnegative integers, real numbers and complex numbers, respectively.

\smallskip

\noindent $|\gamma| := \displaystyle\sum_{j=1}^d \gamma_j$ for $\gamma = (\gamma_1, \ldots, \gamma_d) \in \NN_0^d$

\smallskip

\noindent $z^{\gamma} := \displaystyle\prod_{j=1}^d x_j^{\gamma_j}$ for $z = (z_1, \ldots, z_d) \in \CC^d$ and $\gamma = (\gamma_1, \ldots, \gamma_d) \in \NN_0^d$

\smallskip

\noindent ${\rm e}_j := \in \NN_0^d$ denotes the $d$tuple of $0$s with a $1$ in the $j$ position

\smallskip

\noindent $0_d := (0, \ldots, 0) \in \NN_0^d$

\smallskip

\noindent $\pi_j(\gamma) := \gamma_j$ for for $\gamma = (\gamma_1, \ldots, \gamma_d) \in \NN_0^d$.

\smallskip

\noindent $\preceq_L$ denotes the graded lexicographic ordering on $\NN_0^d$

\smallskip

\noindent $\ell(m, d) := \binom{m+d}{d} = \frac{(m+d)! }{n! \, d!}$

\smallskip

\noindent $\RR[x_1, \ldots, x_d]$ denotes the ring of polynomials in $d$ indeterminate variables with real coefficients

\smallskip

\noindent $\RR^d_n[{\bf x}] := \{ p \in \RR[x_1, \ldots, x_d]: p(x) = \sum_{0 \leq |\gamma| \leq n} p_{\gamma} x^{\gamma} \}$

\noindent $\mu = \mu_+ - \mu_-$ will denote the Jordan decomposition (see, e.g., \cite[p. 119]{Rudin}) of a signed Borel measure $\mu$.

\section{$d$-Hankel matrices} 
\label{sec:MHM}
In this section we will define $d$-Hankel matrices (also known as moment matrices in the literature) and introduce a number of basic facts for future use.

\begin{defn}[$d$-Hankel matrix]
\label{def:Feb13dH}
Given a real-valued truncated multisequence \\$s = (s_{\gamma})_{\stackrel{0 \leq |\gamma| \leq 2n}{\gamma \in \NN_0^d}}$, let
\begin{equation}
\label{eq:ell3}
\ell(n,d) = \binom{n+d}{d} = \frac{(n+d)!}{n!\, d!}
\end{equation}
and $M(n) \in \RR^{\ell(n,d) \times \ell(n,d)}$ denote the {\it d-Hankel} matrix based on $s$ constructed in the following way. First order the monomials 
$$(X^{\lambda})_{0 \leq |\lambda| \leq n} = (X_1^{\lambda_1}\cdots X_d^{\lambda_d})_{\stackrel{\lambda = (\lambda_1, \ldots, \lambda_d)}{0 \leq |\lambda| \leq n} }$$
in a graded lexicographic manner, i.e., use the ordering
$$X^{\lambda} \preceq_L X^{\gamma} \quad {\rm with} \quad \lambda = (\lambda_1, \ldots, \lambda_d) \quad {\rm and} \quad \gamma = (\gamma_1, \ldots, \gamma_d)$$
if $|\lambda| < |\gamma|$, and if $|\lambda| = |\gamma|$, then
$\lambda_j \geq \gamma_j$ for $j=1,\ldots, d$ and $\lambda_d \leq \gamma_d$. 
For example, if, $d=2$ and $n=3$, then
$$1, X_1, X_2, X_1^2, X_1X_2, X_2^2, X_1^3, X_1^2X_2, X_1X_2^2, X_2^3.$$
Note that 
$$\card(X^{\lambda})_{0 \leq |\lambda| \leq n} = \ell(n,d) = \binom{n+d}{d}.$$
Thus, we may index the rows and columns of $M(n)$ by $(X^{\lambda})_{0 \leq |\lambda| \leq n}$ (ordered by $\preceq_L$) and let the entry in the row indexed by $X^{\lambda}$ and the column indexed by $X^{\xi}$ be given by
$$s_{\lambda+\xi}.$$
Following \cite{CF_memoir} (and subsequent papers of Curto and Fialkow), we shall use $X^{\lambda} \in \cC_{M(n)}$, where $\cC_{M(n)}$ denotes the column space of $M(n)$, to refer to the column vector in $\RR^{\ell(n,d)}$. 
\end{defn}


\begin{example}
\label{ex:MV1}
If $d = 2$ and $s = (s_{\gamma})_{0 \leq |\gamma|\leq 4}$, then
$$M(2) = \kbordermatrix{ & 1 & X_1 & X_2 & X_1^2 & X_1X_2 & X_2^2 \\
1 & s_{00} & s_{10} & s_{01} & s_{20} & s_{11} & s_{02} \\ 
X_1 & s_{10} & s_{20} & s_{11} & s_{30} & s_{21} & s_{12} \\
X_2 & s_{01} & s_{11} & s_{02} & s_{21} & s_{12}  & s_{03} \\
X_1^2 & s_{20} & s_{30} & s_{21} & s_{40} & s_{31} & s_{22} \\
X_1 X_2 & s_{11} & s_{21} & s_{12} & s_{31} & s_{22} & s_{13} \\
X_2^2 & s_{02} & s_{12} & s_{03} & s_{22} & s_{13} & s_{04} 
}.$$
\end{example}

Fix $d \in \NN$ and $n \in \NN_0$. We may index the rows and columns of any given Hermitian matrix $M \in \RR^{\ell(n,d) \times \ell(n,d)}$, where $\ell(n,d)$ is as in \eqref{eq:Feb10po3} by the monomials $(X^{\gamma})_{0 \leq |\gamma| \leq n}$ (ordered in a graded lexicographic manner). Thus, we may write
$$M = (m_{\lambda, \xi})_{0 \leq |\lambda|, |\xi| \leq n}$$
and introduce the Hermitian form on $\RR^d_n[{\bf x}]$, i.e., the space of polynomials of total degree at most $n$ with complex coefficients, given by 
\begin{equation}
\label{eq:Feb10po3}
(x^{\lambda}, x^{\xi})_M = m_{\lambda, \xi}.
\end{equation}

In the following lemma, $\pi_j(\lambda)$ will denote the canonical mapping of $\NN_0^d$ onto $\NN_0$ given by
$$\pi_j(\lambda) = \lambda_j \quad\quad {\rm for} \quad \lambda = (\lambda_1, \ldots, \lambda_d) \in \NN_0^d.$$

\begin{lem}
\label{lem:dH}
Fix $d \in \NN$ and $n \in \NN_0$. Let $M \in \RR^{\ell(n,d) \times \ell(n,d)}$ and write $M= (m_{\lambda,\xi})_{\stackrel{0 \leq |\lambda|, |\xi| \leq n}{\lambda, \xi \in \NN_0^d}}$ and $(\cdot, \cdot)_M$ be as above. $M$ is $d$-Hankel if and only if 
\begin{equation}
\label{eq:dhan3}
(x^{\lambda}, x^{\xi} )_M = (x^{\gamma}, x^{\eta})_M \quad\quad {\rm for} \quad 0 \leq |\lambda|, |\xi|, |\gamma|, |\eta| \leq n
\end{equation}
whenever 
$$
\pi_j(\lambda) + \pi_j(\xi) = \pi_j(\gamma) + \pi_j(\eta)
$$
for $0 \leq |\lambda|, |\xi|, |\gamma|,|\eta| \leq n$ and $j=1,\ldots, d$.
\end{lem}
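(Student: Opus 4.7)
The plan is to observe that the hypothesis $\pi_j(\lambda)+\pi_j(\xi)=\pi_j(\gamma)+\pi_j(\eta)$ for all $j=1,\ldots,d$ is precisely the statement that $\lambda+\xi=\gamma+\eta$ as elements of $\NN_0^d$. With this reformulation, the lemma simply asserts that $M$ is $d$-Hankel if and only if the entry $m_{\lambda,\xi}$ depends only on the multi-index sum $\lambda+\xi$, which is essentially the definition of ``moment matrix.'' The Hermitian form $(\cdot,\cdot)_M$ is just a bookkeeping device: by \eqref{eq:Feb10po3} we have $(x^\lambda,x^\xi)_M = m_{\lambda,\xi}$, so the whole statement is about matrix entries.

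For the forward direction, suppose $M=M(n)$ is built from a truncated multisequence $s=(s_\gamma)_{0\le|\gamma|\le 2n}$ as in Definition \ref{def:Feb13dH}, so that $m_{\lambda,\xi}=s_{\lambda+\xi}$. Given indices satisfying $\pi_j(\lambda)+\pi_j(\xi)=\pi_j(\gamma)+\pi_j(\eta)$ for each $j$, componentwise we have $\lambda+\xi=\gamma+\eta$, hence
\[
(x^\lambda,x^\xi)_M = m_{\lambda,\xi} = s_{\lambda+\xi} = s_{\gamma+\eta} = m_{\gamma,\eta} = (x^\gamma,x^\eta)_M,
\]
which is \eqref{eq:dhan3}.

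For the converse, assume \eqref{eq:dhan3}. I would define a truncated multisequence $s=(s_\alpha)_{0\le|\alpha|\le 2n}$ by choosing, for each multi-index $\alpha\in\NN_0^d$ with $0\le|\alpha|\le 2n$, any decomposition $\alpha=\lambda+\xi$ with $0\le|\lambda|,|\xi|\le n$ (such a decomposition exists since one can always split the exponents, e.g.\ by peeling off the first $\min(n,|\alpha|)$ units of $\alpha$), and setting $s_\alpha:=m_{\lambda,\xi}$. The hypothesis \eqref{eq:dhan3} guarantees this is independent of the chosen decomposition, because any two decompositions $\alpha=\lambda+\xi=\gamma+\eta$ satisfy $\pi_j(\lambda)+\pi_j(\xi)=\pi_j(\alpha)=\pi_j(\gamma)+\pi_j(\eta)$ for all $j$. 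Then by construction $m_{\lambda,\xi}=s_{\lambda+\xi}$ for all admissible $\lambda,\xi$, so $M$ equals the $d$-Hankel matrix $M(n)$ built from $s$.

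There is no real obstacle here; the only point requiring a bit of care is the well-definedness of $s_\alpha$ in the converse direction, which is exactly why the invariance hypothesis is stated in terms of every coordinate $\pi_j$ rather than just the total degree. I would make sure to flag that equivalence $\pi_j(\lambda)+\pi_j(\xi)=\pi_j(\gamma)+\pi_j(\eta)\ \forall j\Leftrightarrow \lambda+\xi=\gamma+\eta$ explicitly at the start so the reader sees the lemma is a tautological rewriting once it is spelled out.
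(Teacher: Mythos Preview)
Your proof is correct and follows essentially the same approach as the paper: both reduce \eqref{eq:dhan3} to the entrywise condition $m_{\lambda,\xi}=m_{\gamma,\eta}$ whenever $\lambda+\xi=\gamma+\eta$, and then define $s_{\lambda+\xi}:=m_{\lambda,\xi}$ to exhibit $M$ as $M(n)$. Your version is a bit more explicit than the paper's (you spell out the forward direction and the well-definedness of $s_\alpha$, both of which the paper leaves implicit), but the argument is the same.
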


\begin{proof}
Condition \eqref{eq:dhan3} is equivalent to
\begin{equation}
\label{eq:dhan4}
m_{\lambda, \xi} = m_{\gamma, \eta}
\end{equation}
whenever
$$
\pi_j(\lambda) + \pi_j(\xi) = \pi_j(\gamma) + \pi_j(\eta)
$$

for $0 \leq |\lambda|, |\xi|, |\gamma|,|\eta| \leq n$ and $j=1,\ldots, d$. Thus, if we let $s = (s_{\gamma})_{0 \leq |\gamma| \leq 2n}$ be given by
$$s_{\lambda+ \xi } = m_{\lambda, \xi},$$
then \eqref{eq:dhan4} implies that
$$M = M(n).$$
Thus, $M$ is $d$-Hankel.
\end{proof}

\begin{lem}
\label{lem:Mar5tg1}
Suppose that $\lambda, \xi, \gamma$ and $\eta$ belong to $\NN_0^d$ and satisfy
$$0 \leq |\lambda|, |\xi|, |\gamma|, |\eta| \leq n.$$
If
\begin{equation}
\label{eq:Mar5pp3}
\lambda_j + \xi_j = \gamma_j + \eta_j \quad {\rm for} \quad j =1,\ldots, d
\end{equation}
and
$$\xi \preceq_L \eta,$$
then
\begin{equation}
\label{eq:Mar5k3}
|\lambda|+|\xi | = |\gamma|+ | \eta|
\end{equation}
and
$$\gamma \preceq_L \lambda$$
\end{lem}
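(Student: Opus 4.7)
The first conclusion is essentially free: summing the hypothesis $\lambda_j + \xi_j = \gamma_j + \eta_j$ over $j = 1, \ldots, d$ yields $|\lambda| + |\xi| = |\gamma| + |\eta|$.

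For the second conclusion, my plan is to rewrite the hypothesis as the componentwise identity $\lambda - \gamma = \eta - \xi$ and then split on the total-degree dichotomy in the definition of $\preceq_L$. First, if $|\xi| < |\eta|$, then from \eqref{eq:Mar5k3} we get $|\lambda| - |\gamma| = |\eta| - |\xi| > 0$, so $|\gamma| < |\lambda|$ and $\gamma \preceq_L \lambda$ holds by the graded part of the order, with no lex comparison required.

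The substantive case is $|\xi| = |\eta|$, in which case \eqref{eq:Mar5k3} forces $|\lambda| = |\gamma|$, so we must verify the lex condition. Here I would let $k \in \{1, \ldots, d\}$ be the first index at which $\xi$ and $\eta$ differ (if no such index exists then $\xi = \eta$, hence $\lambda = \gamma$ by $\lambda - \gamma = \eta - \xi$, and $\gamma \preceq_L \lambda$ trivially). By the graded lex convention fixed in the example after Definition \ref{def:Feb13dH}, the relation $\xi \preceq_L \eta$ among equal-degree monomials means that at this first differing index, $\xi_k$ exceeds $\eta_k$, i.e.\ $\eta_k - \xi_k < 0$. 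Using $\lambda_j - \gamma_j = \eta_j - \xi_j$ for each $j$, the integer $k$ is also the first index at which $\lambda$ and $\gamma$ differ, and at that index $\lambda_k - \gamma_k < 0$, i.e.\ $\gamma_k > \lambda_k$. By the same convention, this is precisely the lex condition $\gamma \preceq_L \lambda$.

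There isn't really a hard step here: the lemma is a bookkeeping statement about the compatibility of componentwise equality with the graded lex order, and the main thing to be careful about is correctly translating the ordering convention (smaller in the $\preceq_L$ sense corresponds to the larger value at the first differing coordinate, as exhibited by the degree-$2$ example $X_1^2 \preceq_L X_1X_2 \preceq_L X_2^2$) from the $(\xi,\eta)$ pair to the $(\gamma,\lambda)$ pair via the sign flip in $\lambda - \gamma = \eta - \xi$.
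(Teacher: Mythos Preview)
Your argument is correct. The paper's proof handles the equal--degree case differently: rather than isolating the first differing index, it argues by contradiction, assuming $\lambda \preceq_L \gamma$ with $\lambda \neq \gamma$; then the componentwise inequalities coming from $\xi \preceq_L \eta$ and $\lambda \preceq_L \gamma$ (namely $\xi_j \geq \eta_j$, $\lambda_j \geq \gamma_j$ for $j<d$ and the reverse at $j=d$) all point the same way, so a strict inequality somewhere would force $\lambda_j+\xi_j \neq \gamma_j+\eta_j$, contradicting \eqref{eq:Mar5pp3}. Your direct ``first differing index'' argument is cleaner and sidesteps any ambiguity in the paper's stated definition of $\preceq_L$ for $d\geq 3$; the paper's version leans on the specific componentwise description given after Definition~\ref{def:Feb13dH}. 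Both are routine once the order convention is pinned down, and you've read that convention correctly from the $d=2$ example.
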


\begin{proof}
Assertion \eqref{eq:Mar5k3} is an easy consequence of \eqref{eq:Mar5pp3}. Suppose to the contrary that $\lambda \preceq_L \gamma$ and $\lambda \neq \gamma$. In view of \eqref{eq:Mar5k3}, the only case we need consider is when
$$\text{$|\xi| = |\eta|$ with $\xi_j \geq \eta_j$ for $j=1,\ldots, d-1$ and $\xi_d \leq \eta_d$}$$ 
and
$$\text{$|\lambda|= |\gamma|$ with $\lambda_j \geq \gamma_j$ for $j=1,\ldots, d-1$ and $\lambda_d \leq \gamma_d.$}$$
Since $\lambda \neq \gamma$, one of the above inequalities must be a strict inequality. Thus, assumption \eqref{eq:Mar5pp3} cannot possibly hold.
\end{proof}

\begin{lem}
\label{lem:dHHerm}
Let $n > 0$ and $s = (s_{\gamma})_{\stackrel{0 \leq |\gamma| \leq 2n}{\gamma \in \NN_0^d} }$ be a given real-valued truncated multisequence. Any submatrix $\Phi$ of $M(n)$ with rows indexed by 
$$(X^{\lambda})_{\lambda \in \Lambda}, \quad \quad {\rm where} \quad \emptyset \neq \Lambda \subseteq \{ \gamma \in \NN_0^d: 0 \leq |\gamma| \leq n \},$$
and columns indexed by
$$(X^{\lambda+ \xi})_{\lambda \in \Lambda} \quad \text{for any fixed $\xi \in \NN_0^d$ with $|\lambda +\xi| \leq n$}$$
is Hermitian.
\end{lem}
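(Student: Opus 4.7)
The plan is essentially a direct computation: unpack what the $(\lambda,\mu)$-entry of $\Phi$ is in terms of the multisequence $s$ and observe the symmetry in $\lambda$ and $\mu$.

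More precisely, I would proceed as follows. Fix $\lambda, \mu \in \Lambda$. The row of $\Phi$ indexed by $X^\lambda$ corresponds to row $X^\lambda$ of $M(n)$, and the column of $\Phi$ indexed by $X^{\mu+\xi}$ corresponds to column $X^{\mu+\xi}$ of $M(n)$. Since $|\lambda|\le n$ and $|\mu+\xi|\le n$ by hypothesis, the corresponding entry lies in $M(n)$ and, by Definition \ref{def:Feb13dH}, equals $s_{\lambda+\mu+\xi}$. Thus
\[
\Phi_{\lambda,\mu} \;=\; s_{\lambda+\mu+\xi} \;=\; s_{\mu+\lambda+\xi} \;=\; \Phi_{\mu,\lambda}.
\]
Since $s$ is real-valued, this symmetry is the Hermitian condition, so $\Phi$ is Hermitian.

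The only point that requires a bit of care, and the closest thing to an obstacle, is making sure that both $\Phi_{\lambda,\mu}$ and $\Phi_{\mu,\lambda}$ are genuinely legal entries of $\Phi$, i.e. that the hypothesis ``$|\lambda+\xi|\le n$ for all $\lambda \in \Lambda$'' is symmetric in its usage in the two roles (row-index and column-index). This is immediate: the set $\Lambda$ and the shift $\xi$ are the same whether we call the index ``$\lambda$'' or ``$\mu$'', so if $\mu \in \Lambda$ then $|\mu+\xi|\le n$ as well, and the subscript $\lambda+\mu+\xi$ has total degree at most $2n$, hence $s_{\lambda+\mu+\xi}$ is indeed an entry of $M(n)$. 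No appeal to Lemma \ref{lem:dH} or Lemma \ref{lem:Mar5tg1} is needed; the claim is just the observation that the shift $\xi$ is applied uniformly to the column index and that addition of multi-indices is commutative.
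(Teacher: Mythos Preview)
Your proof is correct and follows essentially the same approach as the paper: both arguments simply observe that the $(\lambda,\mu)$-entry of $\Phi$ is $s_{\lambda+\mu+\xi}$, which is symmetric in $\lambda$ and $\mu$ by commutativity of addition in $\NN_0^d$, and hence $\Phi$ is symmetric (equivalently Hermitian, since $s$ is real-valued).
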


\begin{proof}
Write $\Phi = (\varphi_{\lambda, \gamma + \xi})_{\stackrel{0 \leq |\lambda| \leq n-1}{0 \leq |\gamma | \leq n-1}}$, where $\varphi_{\lambda, \gamma + \xi} = s_{\lambda+\gamma+\xi}$. Then we must check that
$$\varphi_{\lambda, \gamma + \xi} = \varphi_{\gamma, \lambda + \xi}.$$
But this is immediate since
$$\varphi_{\gamma,\lambda+ \xi} = s_{\gamma+\lambda+\xi}.$$
\end{proof}

The following lemma serves as a guide for constructing rank preserving extensions of $d$-Hankel matrices. In the case that $A \succeq 0$, then the following lemma is well-known and is due to Smuljan \cite{Smuljan}. 

\begin{lem}
\label{lem:Mar31e3}
Let $A = A^* \in \CC^{m \times m}$, $B \in \CC^{m \times n}$ and $C = C^* \in \CC^{n \times n}$ be given. The block matrix
$$M := \begin{pmatrix}
A & B \\ B^* & C 
\end{pmatrix}$$
satisfies
\begin{equation}
\label{eq:r1}
\rank M = \rank A
\end{equation}
if and only if there exists a matrix $W \in \CC^{m \times n}$ such that
\begin{equation}
\label{eq:Mar31r1}
B = AW
\end{equation}
and
\begin{equation}
\label{eq:Mar31r2}
C = W^* A W.
\end{equation}
In this case, $i_{\pm}(A) = i_{\pm}(M)$.
\end{lem}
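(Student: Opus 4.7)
The plan is to organise both directions around a single block congruence: if $W \in \CC^{m \times n}$ is \emph{any} matrix satisfying $B = AW$, then a direct multiplication yields
\[
\begin{pmatrix} I & 0 \\ -W^* & I \end{pmatrix} M \begin{pmatrix} I & -W \\ 0 & I \end{pmatrix} = \begin{pmatrix} A & 0 \\ 0 & C - W^*AW \end{pmatrix},
\]
and the outer factors are unipotent and hermitian conjugates of one another. Sylvester's law of inertia applied to this congruence will then control both $\rank M$ and $i_\pm(M)$ in terms of the corresponding quantities for $A$ and for $C - W^*AW$. This one identity is the engine of the whole proof.

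For sufficiency, I would simply assume $B = AW$ and $C = W^*AW$. The right hand side of the displayed identity collapses to $\mathrm{diag}(A,0)$, exhibiting $M$ as a congruence transform of $\mathrm{diag}(A,0)$, which instantly gives $\rank M = \rank A$ and $i_\pm(M) = i_\pm(A)$.

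For necessity, the first task is to produce a suitable $W$. Here I would use that $M$ is Hermitian of rank equal to $\rank A$ to observe
\[
\rank A \;\leq\; \rank \begin{pmatrix} A & B \end{pmatrix} \;\leq\; \rank M \;=\; \rank A,
\]
so every column of $B$ lies in the column range of $A$, yielding $W \in \CC^{m \times n}$ with $B = AW$. Plugging this $W$ into the displayed congruence and invoking rank invariance under congruence, I get $\rank A + \rank(C - W^*AW) = \rank M = \rank A$, which forces $C = W^*AW$. The inertia identity $i_\pm(A) = i_\pm(M)$ then falls out of Sylvester's law applied to the same congruence.

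The step I expect to be the main obstacle is the extraction of $W$ in the necessity direction: showing that the columns of $B$ actually lie in the range of $A$ is the one place where the rank hypothesis (rather than mere Hermiticity of $M$) is genuinely used, and where one has to verify that Hermiticity forces rank of the horizontal and vertical extensions of $A$ to coincide. Once $W$ exists, everything else is routine block-matrix bookkeeping and reduces to the positive semidefinite argument of Smuljan, with Sylvester's law playing the role of the positivity-based inertia count.
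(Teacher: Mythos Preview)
Your proof is correct. The sufficiency direction is essentially identical to the paper's: both exhibit $M$ as congruent to $\mathrm{diag}(A,0)$ via the unipotent block-triangular factors, and invoke Sylvester's law.

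The necessity direction differs. The paper works explicitly with the orthogonal splitting $\CC^m = \cC_A \oplus \cN_A$, writes $A = \mathrm{diag}(\tilde A,0)$ with $\tilde A$ invertible, decomposes $B$ accordingly, and then applies a Schur-complement-plus-Sylvester argument to the $3\times 3$ block form of $M$ to force $B_{21}=0$ and $C = B_{11}^* \tilde A^{-1} B_{11}$, constructing $W$ explicitly as $\tilde A^{-1}B_{11}$. Your route is more economical: you obtain $W$ \emph{a priori} from the rank sandwich $\rank A \le \rank(A\;B)\le \rank M = \rank A$, and then reuse the \emph{same} congruence identity from the sufficiency direction to reduce the question to $\rank(C-W^*AW)=0$. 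This unifies both halves of the argument under a single identity and avoids the coordinate decomposition and the $3\times 3$ Schur step entirely. The paper's version, by contrast, gives a slightly more concrete formula for $W$ in terms of the invertible block $\tilde A$. Your remark about Hermiticity linking the horizontal and vertical extensions is harmless but unnecessary: $\rank(A\;B)\le \rank M$ already holds because $(A\;B)$ is a block of rows of $M$.
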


\begin{proof}
If \eqref{eq:Mar31r1} and \eqref{eq:Mar31r2} are in force, then
\begin{equation}
\label{eq:Mar31f1}
M := 
\begin{pmatrix}
A & B \\ B^* & C 
\end{pmatrix}
= 
\begin{pmatrix}
I & 0 \\ W^* & I
\end{pmatrix}
\begin{pmatrix}
{A} & 0 \\ 0 & 0
\end{pmatrix}
\begin{pmatrix}
I & W \\ 0 & I
\end{pmatrix}.
\end{equation}
Thus, Sylvester's law of inertia (see, e.g., Theorem 4.5.8 in \cite{HJ}) implies that
$i_{\pm}(A) = i_{\pm}(M)$ and hence 
$$
\rank A = i_{+}(A) + i_-(A) = i_+(M) + i_-(M) = \rank M.
$$

Conversely, suppose \eqref{eq:r1} is in force. Let $\cC_A$ and $\cN_A$ denote the column space and null space, respectively, of $A$. Write
$$A = \begin{pmatrix}
\wt{A} & 0 \\ 0 & 0
\end{pmatrix}:
\begin{array}{cccc}
\cC_A & & \cC_A \\
\oplus & \to & \oplus \\
\cN_A & & \cN_A
\end{array}$$
and
$$B^* = \begin{pmatrix}
B_{11}^* & B_{21}^*
\end{pmatrix}:
\begin{array}{cccc}
\cC_A  & &  \\
\oplus & \to & \CC^n. \\
\cN_A  & & 
\end{array}$$
We may use a Schur complement argument and Sylvester's law of inertia to realise
\begin{align*}
i_{\pm}(M) =& \; i_{\pm} \begin{pmatrix} \wt{A} & 0 & B_{11} \\ 0 & 0 & B_{21} \\ B_{11}^* & B_{21}^* & C \end{pmatrix} \\
=& \; i_{\pm} \begin{pmatrix} 
\wt{A} & 0 & 0 \\ 0 & 0 & B_{21} \\ 0 & B_{21}^* & c - B_{11}^* \wt{A}^{-1} B_{11}
\end{pmatrix} \\
=& \; i_{\pm}(\wt{A}) + i_{\pm} \begin{pmatrix}
0 & B_{21} \\ B_{21}^* & C - B_{11}^* \wt{A}^{-1} B_{21}
\end{pmatrix}
\end{align*} 
and hence 
$$\rank \begin{pmatrix}
0 & B_{21} \\ B_{21}^* & C - B_{11}^* \wt{A}^{-1} B_{11}
\end{pmatrix} = 0,$$
i.e., 
$B_{21} = 0$ and $C = B_{11}^* \wt{A}^{-1} B_{11}$. Finally, if we let $W = \wt{A}^{-1} B_{11}$, then it can easily be checked that \eqref{eq:Mar31r1} and \eqref{eq:Mar31r2} hold.
\end{proof}


%

\section{Quasi-complex measures}
\label{sec:QCM}
In this section, we will introduce a class of representing measures that will be considered in the sequel.

\begin{defn}[Quasi-complex measure on $\CC^d$]
\label{def:Feb22qcm}
A Borel measure $\mu$ on $\CC^d$ is called {\it quasi-complex} if the support of $\mu$, denoted by $\supp \mu$ is symmetric about the real axis, i.e., 
$$z \in \supp \mu \Longleftrightarrow \bar{z} \in \supp \mu.$$
\end{defn}

\begin{rem}
\label{rem:Feb23possigned}
It is immediate that every signed Borel measure is quasi-complex and, in particular, every positive Borel measure is also quasi-complex.
\end{rem}

\begin{defn}[Quasi-complex and signed representing measure]
Given a real-valued multisequence $s = (s_{\gamma})_{\stackrel{0 \leq |\gamma| \leq m}{\gamma \in \NN_0^d}}$, where $0 \leq m \leq \infty$, and a quasi-complex measure $\mu$ on $\CC^d$ such that
$$\int_{\CC^d} |z^{\gamma}| d\mu(z) := \int \cdots \int_{\CC^d} \left| z_1^{\gamma_1} \cdots z_d^{\gamma_d} \right| d\mu(z_1, \ldots, z_d) < \infty$$
for all $0 \leq |\gamma| \leq m$, we say that $\mu$ is a quasi-complex representing measure for $s$ if
\begin{equation} 
\label{eq:Feb22rm}
s_{\gamma} = \int_{\CC^d} z^{\gamma} d\mu(z) \quad {\rm for} \quad 0 \leq |\gamma| \leq m.
\end{equation}
Finally, if, in particular, $\mu$ is a quasicomplex representing measure for $s$ such that $\mu$ is real-valued and $\supp \mu \subseteq \RR^d$, then we will call $\mu$ a signed representing measure for $s$.
\end{defn}

\begin{defn}[An analogue of a Vandermonde matrix]
\label{def:VM}
Corresponding to 
$$z^{(1)}, \ldots, z^{(r)} \in \CC^d \quad {\rm and} \quad \quad n \in \NN_0,$$ we may define the Vandermonde matrix
$V \in \CC^{r \times \ell(n,d)}$, where $\ell(n,d) = \binom{n+d}{d}$, in the following manner. Index the rows of $V$ by $1, \ldots, r$ and the columns of $V$ by $\{ \lambda \in \NN_0^d: 0 \leq |\lambda | \leq n \}$ (ordered by $\preceq_L$ as in Section \ref{sec:MHM}) and let the entry in the row indexed by $q$ and the column indexed by $\lambda$ be given by
$$(z^{(q)} )^{\lambda}.$$
\end{defn}

\begin{lem}
\label{lem:Feb22m1}
Let $s = (s_{\gamma})_{0 \leq |\gamma| \leq 2n}$ be a given real-valued truncated multisequence. If $s$ has an $r$-atomic quasi-complex representing measure $\mu = \sum_{q=1}^r \varrho_a \delta_{z^{(a)}}$, then
\begin{equation}
\label{eq:Feb22f1}
M(n) = V^T R V,
\end{equation}
where $M(n)$ is as in Definition {\rm \ref{def:Feb13dH}}, $V$ is as above and $R = {\rm diag}(\varrho_1, \ldots, \varrho_r)$.
\end{lem}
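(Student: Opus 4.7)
The plan is a direct computation of the $(\lambda,\xi)$ entry of $M(n)$ and of $V^T R V$, followed by comparison. Since $\mu = \sum_{a=1}^r \varrho_a \delta_{z^{(a)}}$ is an $r$-atomic quasi-complex representing measure for $s$, the representing property \eqref{eq:Feb22rm} immediately yields
$$
s_\gamma \;=\; \int_{\CC^d} z^\gamma\, d\mu(z) \;=\; \sum_{a=1}^r \varrho_a \,(z^{(a)})^\gamma \qquad \text{for } 0 \leq |\gamma| \leq 2n.
$$
I would use this as the only analytic input; everything else is bookkeeping in the graded lexicographic ordering on $\{\lambda \in \NN_0^d : 0 \leq |\lambda| \leq n\}$.

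Next, I would fix multi-indices $\lambda,\xi$ with $0 \leq |\lambda|,|\xi| \leq n$ and compute the $(\lambda,\xi)$ entry of $M(n)$. By Definition \ref{def:Feb13dH} this entry equals $s_{\lambda+\xi}$, and applying the displayed identity above with $\gamma = \lambda+\xi$ gives
$$
\bigl(M(n)\bigr)_{\lambda,\xi} \;=\; s_{\lambda+\xi} \;=\; \sum_{a=1}^r \varrho_a\,(z^{(a)})^{\lambda+\xi} \;=\; \sum_{a=1}^r \varrho_a\,(z^{(a)})^{\lambda}(z^{(a)})^{\xi}.
$$
Here I use the elementary fact that $z^{\lambda+\xi} = z^\lambda z^\xi$ for $z \in \CC^d$ and $\lambda,\xi \in \NN_0^d$, valid coordinate-wise on each $z_j$.

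Finally, I would read off the entries of $V$ and $V^T R V$ from Definition \ref{def:VM}: $V_{a,\lambda} = (z^{(a)})^\lambda$, so $(V^T)_{\lambda,a} = (z^{(a)})^\lambda$, and $R = \operatorname{diag}(\varrho_1,\ldots,\varrho_r)$ gives
$$
\bigl(V^T R V\bigr)_{\lambda,\xi} \;=\; \sum_{a=1}^r (V^T)_{\lambda,a}\, \varrho_a\, V_{a,\xi} \;=\; \sum_{a=1}^r (z^{(a)})^\lambda\, \varrho_a\, (z^{(a)})^\xi,
$$
which matches the previous expression for $(M(n))_{\lambda,\xi}$. Since $\lambda,\xi$ range over the same indexing set that indexes the rows and columns of $M(n)$ (and of $V^T R V$, via the column indexing of $V$), the identity \eqref{eq:Feb22f1} follows.

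There is no real obstacle in this argument; the only point requiring a small amount of care is making sure the indexing conventions for the rows and columns of $V$, $V^T$, and $M(n)$ all agree, since $V$ is rectangular ($r \times \ell(n,d)$) while $M(n)$ is square of size $\ell(n,d)$. Once one writes $(V^T R V)_{\lambda,\xi}$ out explicitly, the match with $s_{\lambda+\xi}$ is immediate. Note that the quasi-complex hypothesis on $\mu$ is not invoked directly in the computation; it enters only implicitly through the existence assumption of the representing measure.
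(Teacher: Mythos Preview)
Your argument is correct and is exactly the approach the paper takes: it also writes $s_\gamma = \sum_{a=1}^r \varrho_a (z^{(a)})^\gamma$ and says the factorisation is then ``easily checked.'' You have simply made that check explicit by computing the $(\lambda,\xi)$ entries of both sides.
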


\begin{proof}
If $\mu$ is an $r$-atomic quasi-complex representing measure for $s$, then
$$s_{\gamma} = \sum_{a=1}^r \varrho_q (z^{(a)})^{\gamma} \quad {\rm for } \quad 0 \leq |\gamma| \leq 2n$$
and hence the validity of the factorisation for $M(n)$ claimed in \eqref{eq:Feb22f1} is easily checked.
\end{proof}

\begin{lem}
\label{lem:Feb22supp}
Let $s = (s_{\gamma})_{0 \leq |\gamma| \leq m}$ be a given real-valued truncated multisequence. Then any quasi-complex representing measure $\mu$ for $s$ must obey
\begin{equation}
\label{eq:Feb22pp4}
\rank M(\lfloor m/2 \rfloor) \leq \card \supp \mu. 
\end{equation}
\end{lem}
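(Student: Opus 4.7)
The plan is to reduce to the finitely supported case and then directly invoke Lemma \ref{lem:Feb22m1}. Set $n = \lfloor m/2 \rfloor$, so that $2n \leq m$ and all of the moments entering $M(n)$ are determined by $s$ and (by hypothesis) arise by integrating monomials of degree at most $2n$ against $\mu$.

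First I would dispose of the trivial case: if $\card \supp \mu = \infty$, then the right-hand side of \eqref{eq:Feb22pp4} is infinite while $M(n)$ is a matrix in $\RR^{\ell(n,d) \times \ell(n,d)}$ and thus has finite rank, so the inequality is automatic. The content is therefore in the case $r := \card \supp \mu < \infty$. In this case, writing $\supp \mu = \{z^{(1)}, \ldots, z^{(r)}\} \subseteq \CC^d$, the measure $\mu$ must take the form
\begin{equation*}
\mu = \sum_{a=1}^{r} \varrho_a \delta_{z^{(a)}}
\end{equation*}
for some nonzero scalars $\varrho_1, \ldots, \varrho_r \in \CC$ (recall Remark \ref{rem:Feb23possigned}, which permits $\mu$ to be signed or complex). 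Restricting attention to the moments of $\mu$ of order at most $2n$, one sees that $\mu$ is in particular an $r$-atomic quasi-complex representing measure for the sub-truncation $(s_{\gamma})_{0 \leq |\gamma| \leq 2n}$.

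Applying Lemma \ref{lem:Feb22m1} to this sub-truncation yields the factorisation
\begin{equation*}
M(n) = V^{T} R V,
\end{equation*}
where $V \in \CC^{r \times \ell(n,d)}$ is the Vandermonde matrix associated to $z^{(1)}, \ldots, z^{(r)}$ and the indexing set $\{\lambda \in \NN_0^d: 0 \leq |\lambda| \leq n\}$, and $R = \diag(\varrho_1, \ldots, \varrho_r)$. Then
\begin{equation*}
\rank M(n) = \rank(V^{T} R V) \leq \rank V \leq r = \card \supp \mu,
\end{equation*}
which is exactly \eqref{eq:Feb22pp4}.

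Since the argument is essentially a direct citation of Lemma \ref{lem:Feb22m1}, there is no real obstacle; the only subtlety to flag is the quick case distinction on finiteness of the support, together with the observation that a quasi-complex measure with finite support is automatically a finite atomic combination of complex point masses on a conjugation-symmetric set, so that Lemma \ref{lem:Feb22m1} does apply.
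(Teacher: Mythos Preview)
Your proof is correct and follows essentially the same approach as the paper: both split into the trivial infinite-support case and the finite case, write $\mu$ as a finite sum of point masses, invoke the factorisation $M(n) = V^T R V$ from Lemma~\ref{lem:Feb22m1}, and bound the rank. The only cosmetic difference is that the paper bounds $\rank M(n) \leq \rank R = r$ via the diagonal factor while you bound via $\rank V \leq r$; both are immediate.
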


\begin{proof}
If $\supp \mu$ is not finite, then \eqref{eq:Feb22pp4} holds automatically. If $\card \supp \mu = r < \infty$, then we may write $\mu = \sum_{a=1}^r \varrho_a \delta_{z^{(a)}}$ and use the factorisation \eqref{eq:Feb22f1} to obtain the estimate
$$\rank M(n) \leq \rank R = r = \card \supp \mu.$$
\end{proof}

\section{A Pontryagin space defined by $( s_{\gamma} )_{\gamma \in \NN_0^d}$}
\label{sec:Pontryagin}

Let $s = ( s_{\gamma} )_{\gamma \in \NN_0^d}$ be a real-valued multisequence with $\kappa$-negative squares (see Definition \ref{def:Oct27kns}). In this section, we will introduce a Pontryagin space based on $s$. In the case that $\kappa = 0$, then this construction collapses to the usual Hilbert space construction. For $\lambda \in \NN_0^d$, consider the translation operator
$$(E_{\lambda} s)(\gamma) = s_{\lambda + \gamma} \quad\quad  {\rm for} \quad \gamma \in \NN_0^d.$$
In view of $E_{\lambda} E_{{\xi}} = E_{\lambda + \xi}$, the real linear span of translation operators $E_{\lambda}, \lambda \in \NN_0^d$ forms an algebra which we will denote by $\sA^d$. It is readily checked that 
$$\{ E_{\lambda}: \lambda \in \NN_0^d \}$$
are linearly independent in $\sA^d$. Consequently, every element $p(E) \in \sA^d$ admits the representation
$$p(E) = \sum_{0 \leq |\lambda| \leq m} c_{\lambda} E_{\lambda} \quad \quad {\rm for }\quad E_{\lambda} \in \sA^d.$$

In what follows, the number of negative eigenvalues (counting algebraic multiplicity) of a matrix $M \in \CC^{\ell \times \ell}$ will be denoted by $i_-(M)$.

\begin{defn}
\label{def:Oct27kns}
Let $s = (s_{\gamma})_{\gamma \in \NN_0^d}$ be a given real-valued sequence. We say that $s$ has $\kappa$-negative squares if there exists a nonnegative integer $\kappa$ such that the $d$-Hankel matrices satisfy
$$i_-(M(n)) \leq \kappa \quad {\rm for} \quad n =0,1,\ldots$$
and
$$i_-(M(n_0)) = \kappa \quad \text{for some $n_0 \in \NN_0$}.$$
If $\kappa = 0$, then $i_-(M(n)) = 0$ for $n=0,1,\ldots$ and we arrive at the usual notion of a {\it positive definite sequence}.
\end{defn}

Corresponding to a real-valued multisequence $s = ( s_{\gamma} )_{\gamma \in \NN_0^d}$ with $\kappa$-negative squares, consider the space
\begin{equation}
\label{eq:Oct27yj3}
T(s) := \{ p(E) s : p(E) \in \sA^d \}
\end{equation}
endowed with the Hermitian form
$$
(p(E), q(E))_s = \sum_{0 \leq |\lambda| \leq m} \sum_{0 \leq |\xi| \leq n}  c_{\lambda} {d}_{\xi} s_{\lambda + \xi},
$$
where 
$$p(E) = \sum_{0 \leq |\lambda| \leq m} c_{\lambda} E_{\lambda} \quad {\rm and} \quad
q(E) =  \sum_{0 \leq |\xi| \leq n} d_{\xi} E_{\xi}.$$
It is readily checked that $(p(E), q(E))_s$ does not depend on the particular representations for $p(E)$ and $q(E)$ appearing above. Moreover, $(p(E), q(E))_s = {(p(E),q(E))_s}$. If $p(E)s = \sum_{0 \leq |\lambda| \leq m} c_{\lambda} E_{\lambda}s \in T(s)$, then
\begin{align}
(p(E), E_{\xi} )_s =& \; \left( \sum_{0 \leq |\lambda| \leq m} c_{\lambda} E_{\lambda}, E_{\xi}  \right)_s \nonumber \\
=& \; \sum_{0 \leq |\lambda| \leq m} c_{\lambda}s_{\lambda+\xi} \nonumber \\
=& \; (p(E)s)(\xi) \quad {\rm for} \quad p(E)s \in T(s) \quad {\rm and} \quad \xi \in \NN_0^d \label{eq:Oct27rp3},
\end{align}
and hence, $(p(E), q(E))_s = 0$ for all $q(E)s \in T(s)$ if and only if $p(E)s = 0$, i.e.,
\begin{equation}
\label{eq:Oct28pp3}
\text{$(\cdot, \cdot)_s$ is not degenerate.}
\end{equation}
Another consequence of \eqref{eq:Oct27rp3} is
$$(p(E),q(E) )_s = (q(E) p(E)s)(0_d).$$

\begin{rem}
\label{rem:Oct27tn3}
For any real polynomial $p(x) = \sum_{0 \leq |\gamma| \leq n} p_{\gamma} x^{\gamma}$ belonging to $\RR[x_1, \ldots, x_d]$, we can form the operator
$p(E)$ on $\sA^d$. It is easy to see that $p \mapsto p(E)$ is an isomorphism of $\RR[x_1, \ldots, x_d]$ onto $\sA^d$.
\end{rem}

The following result appeared in \cite{BergSasvari} in a more general semigroup setting. We will provide a proof adapted from \cite{BergSasvari} to our current context for completeness.

\begin{thm}[Proposition 3.4 in \cite{BergSasvari} with $S = \NN_0^d$]
\label{thm:Oct27yj3}
Let $s = (s_{\gamma})_{\gamma \in \NN_0^d}$ be a given real-valued sequence with $\kappa$-negative squares. Then there exists a real Pontryagin space $\Pi_{\kappa}(s)$ with the following properties{\rm :}
\begin{enumerate}
\item[(i)] $T(s)$ is dense in $\Pi_{\kappa}(s)$.
\item[(ii)] $t_{\gamma} = ( t, E_{\gamma} )_s$ for $\gamma \in \NN_0^d$ and $t \in \Pi_{\kappa}(s)$.
\end{enumerate}
\end{thm}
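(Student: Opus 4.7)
The plan is to view $(T(s), (\cdot,\cdot)_s)$ as a nondegenerate real inner product space with exactly $\kappa$ negative squares, and then invoke the standard completion of such a space to a real Pontryagin space, as recorded in Appendix~\ref{app:P}.

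First I would verify that $(\cdot,\cdot)_s$ is a well-defined symmetric bilinear form on $T(s)$. Well-definedness with respect to the representation $p(E) = \sum c_\lambda E_\lambda$ uses the stated $\RR$-linear independence of $\{E_\lambda : \lambda \in \NN_0^d\}$ in $\sA^d$; independence from the presentation of $t = p(E)s$ (as opposed to $p(E)$ itself) is exactly \eqref{eq:Oct27rp3}, which simultaneously yields nondegeneracy \eqref{eq:Oct28pp3}. Symmetry is immediate from symmetry of the multi-index sum together with $s$ being real-valued.

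The key step is counting negative squares. For each $n \in \NN_0$ put
\begin{equation*}
T_n(s) := \spann \{E_\lambda s : 0 \leq |\lambda| \leq n\} \subseteq T(s).
\end{equation*}
By construction, the Gram matrix of the spanning family $(E_\lambda s)_{0 \leq |\lambda| \leq n}$ under $(\cdot,\cdot)_s$ is precisely the $d$-Hankel matrix $M(n)$. A standard fact about sesquilinear forms then gives that the negative index of the restriction of $(\cdot,\cdot)_s$ to $T_n(s)$ equals $i_-(M(n))$. Since $T(s) = \bigcup_{n \geq 0} T_n(s)$ and $i_-(M(n))$ is nondecreasing in $n$ (by Cauchy interlacing, as $M(n)$ is a principal submatrix of $M(n+1)$), the hypothesis that $s$ has $\kappa$ negative squares translates into $(\cdot,\cdot)_s$ having exactly $\kappa$ negative squares on $T(s)$.

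Armed with these ingredients, I would apply the completion theorem from Appendix~\ref{app:P}: a nondegenerate real inner product space with $\kappa$ negative squares embeds isometrically and densely into a real Pontryagin space $\Pi_\kappa(s)$, which gives (i). For (ii), identity \eqref{eq:Oct27rp3} already reads $s_{\lambda+\gamma} = (E_\lambda s, E_\gamma)_s$ for every $\lambda, \gamma \in \NN_0^d$; writing $t = E_\lambda s$ on the dense subspace and, more generally, $t_\gamma := (t, E_\gamma)_s$ for every $t \in \Pi_\kappa(s)$, the required formula holds on $T(s)$ by construction and extends by joint continuity of the Pontryagin inner product to the entire completion. The main obstacle I anticipate is ensuring the completion inherits \emph{exactly} $\kappa$ negative squares rather than strictly more; the standard remedy, encoded in the appendix, is to decompose $T(s)$ into a $\kappa$-dimensional strictly negative subspace and a positive-definite orthogonal complement, Hilbert-complete the latter, and reassemble as an orthogonal direct sum.
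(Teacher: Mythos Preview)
Your proposal is correct and follows essentially the same route as the paper: verify nondegeneracy and the negative-square count on $T(s)$, decompose $T(s)$ into a $\kappa$-dimensional strictly negative subspace and a positive-definite orthogonal complement, Hilbert-complete the latter, reassemble, and then extend (ii) from $T(s)$ to the completion by continuity of the indefinite inner product. The one point the paper makes more explicit than you do is the Cauchy--Schwarz estimate $|(p(E)s)(\gamma)| \leq (p(E),p(E))_s^{1/2}\,\|E_\gamma\|_s$ on the positive part, which shows that Cauchy sequences there converge pointwise on $\NN_0^d$ and hence that the abstract completion is realised concretely as a space of sequences; this is what gives the symbol $t_\gamma$ independent meaning before one asserts $t_\gamma = (t,E_\gamma)_s$, rather than (as your last paragraph could be read) defining $t_\gamma$ by that formula and rendering (ii) tautological.
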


\begin{proof}
Let $T(s)$ be the pre-$\pi_{\kappa}$ space constructed in \eqref{eq:Oct27yj3}. Now consider the orthogonal decomposition
\begin{equation}
\label{eq:Oct27od3}
T(s) = \cP \oplus \cN,
\end{equation}
where $\cP$ is a pre Hilbert space and $\cN$ is a negative subspace of dimension $\kappa$. If $p(E) \in \cP$, then \eqref{eq:Oct28pp3} and \eqref{eq:Oct27uu3} ensure
$$|(p(E)s)(\gamma)| = | (p(E), E_{\gamma})_s | \leq \sqrt{(p(E), p(E))_s} \| E_{\gamma} \|_s.$$
Thus, if $(u^{(n)}(E)s)_{n=1}^{\infty}$ is a Cauchy sequence in $\cP$, then $((u^{(n)}(E)s)(\gamma))_{n=1}^{\infty}$ is a Cauchy seuqence in $\RR$ for any fixed $\gamma \in \NN_0^d$. Since $\RR$, with the usual metric, is complete we have the existence of a complex number $(u(E)s)(\gamma)$ such that
$$\lim_{n \uparrow \infty} (u^{(n)}(E)s)(\gamma) = (u(E)s)(\gamma) \quad {\rm for} \quad \gamma \in \NN_0^d.$$ 
Let $\Pi^+_{\kappa}(s)$ be the Hilbert space completion and put
$$\Pi_{\kappa}(s) = \Pi_{\kappa}^+(s) \oplus \Pi_{\kappa}^-(s),$$
where $\Pi_{\kappa}^-(s) := \cN$. Thus, $\Pi_{\kappa}(s)$ is a Pontryagin space with $\kappa$-negative squares.

It is worth noting that although the decomposition \eqref{eq:Oct27od3} is far from unique, in general, all constructions of $\Pi_{\kappa}(s)$ in this manner are isomorphic. Finally, as the mapping $t \mapsto (t, E_{\gamma} s)_s$ is continuous on $\Pi_k(s)$ and $T(s)$ is dense in $\Pi_{\kappa}(s)$ the identity \eqref{eq:Oct27rp3} extends to any $t \in \Pi_{\kappa}(s)$. 
\end{proof}

\section{An integral representation for $(s_{\gamma})_{\gamma \in \NN_0^d}$ with $\rank M(\infty) < \infty$}
\label{sec:full}

Given a real-valued $d$-multisequence $s = (s_{\gamma})_{\gamma \in \NN_0^d}$, let
$$(X^{\gamma})_{\gamma \in \NN_0^d}:= (X_1^{\gamma_1} \cdots X_d^{\gamma_d})_{\gamma = (\gamma_1 \cdots \gamma_d) \in \NN_0^d}$$ 
denote all monomials in the real indeterminates $(X_1, \ldots, X_d)$ ordered in a graded lexicographic manner, i.e., 
$$1, X_1, \ldots, X_d, X_1^2, X_1X_2 \ldots, X_{d-1}X_d, X_d^2, \ldots.$$ 
Given $s = (s_{\gamma})_{\gamma \in \NN_0^d}$, we shall let $M(\infty)$ denote the linear operator on
$$\RR_0^{\omega} = \{ v = (v_{\gamma})_{\gamma \in \NN_0^d}: \text{$v_{\gamma} \in \RR$ and is $0$ for all but finitely many $\gamma \in \NN_0^d$}\}.$$

Following \cite{CF_memoir}, we will denote an element of the column space $\cC_{M(\infty)}$ by 
$$p(X) \quad\quad {\rm for} \quad p(x) = \sum_{0 \leq |\gamma| \leq m} p_{\gamma} x^{\gamma} \in \RR[x_1, \ldots, x_d]$$
and
\begin{equation}
\label{eq:Feb17rank1}
\rank M(\infty) := \dim \cC_{M(\infty)}.
\end{equation}

\begin{lem}
\label{lem:Feb17rl3}
Let $s = (s_{\gamma})_{\gamma \in \NN_0^d}$ be a given real-valued multisequence. Then
\begin{equation}
\label{eq:Feb17ranksup3}
\rank M(\infty) = \sup_{n \in \NN} M(n).
\end{equation}
If $\rank M(\infty) < \infty$, then $\sup$ can be replaced by $\max$ in \eqref{eq:Feb17ranksup3}.
\end{lem}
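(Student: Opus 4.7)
The plan is to sandwich $\rank M(\infty)$ between $\sup_n \rank M(n)$ in both directions. For any fixed $n$, the matrix $M(n)$ is the principal submatrix of $M(\infty)$ indexed by monomials of total degree at most $n$. If columns $X^{\gamma_1}, \ldots, X^{\gamma_r}$ of $M(n)$ are linearly independent as vectors in $\RR^{\ell(n,d)}$, then they are also linearly independent as columns of $M(\infty)$, since any non-trivial relation $\sum_j c_j X^{\gamma_j} = 0$ in $\cC_{M(\infty)}$ means $\sum_j c_j s_{\lambda+\gamma_j} = 0$ for every $\lambda \in \NN_0^d$, which in particular yields a non-trivial relation after restriction to $|\lambda| \leq n$. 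Consequently $\rank M(n) \leq \rank M(\infty)$ for every $n$, so $\sup_n \rank M(n) \leq \rank M(\infty)$.

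For the reverse inequality, given any $R$ linearly independent columns $X^{\gamma_1}, \ldots, X^{\gamma_R}$ of $M(\infty)$, I would argue that their restrictions to $M(n)$ are linearly independent for all sufficiently large $n$. If not, then for infinitely many $n$ there exist unit vectors $c^{(n)} = (c_1^{(n)}, \ldots, c_R^{(n)}) \in \RR^R$ with $\sum_j c_j^{(n)} s_{\lambda+\gamma_j} = 0$ for every $|\lambda| \leq n$. Compactness of the unit sphere in $\RR^R$ produces a subsequential limit $c = (c_1, \ldots, c_R)$ of unit norm, and for each fixed $\lambda \in \NN_0^d$, passing to the limit along indices with $n \geq |\lambda|$ yields $\sum_j c_j s_{\lambda + \gamma_j} = 0$ for all $\lambda$, contradicting linear independence in $M(\infty)$. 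Hence $\sup_n \rank M(n) \geq R$ for every $R \leq \rank M(\infty)$, giving $\sup_n \rank M(n) \geq \rank M(\infty)$. This normalisation-plus-compactness step is the only moderately delicate part; everything else is routine linear algebra.

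Finally, when $\rank M(\infty) = r < \infty$, the previous paragraph applied with $R = r$ furnishes an integer $n_0$ such that $\rank M(n_0) \geq r$, and combined with the bound from the first paragraph this gives $\rank M(n_0) = r$. Monotonicity of $n \mapsto \rank M(n)$ (any linearly independent set of columns of $M(n)$ remains linearly independent when extended by further entries in $M(n+1)$, while the rank cannot exceed $\rank M(\infty) = r$) then forces $\rank M(n) = r$ for all $n \geq n_0$, so the supremum is attained and may be replaced by $\max$.
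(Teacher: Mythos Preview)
Your proof is correct. The compactness argument for the reverse inequality is sound but more elaborate than needed; the paper takes a more direct path. Given linearly independent columns $X^{\gamma_1}, \ldots, X^{\gamma_R}$ of $M(\infty)$, independence of these vectors in $\RR_0^{\omega}$ is already witnessed by finitely many row indices $\lambda_1, \ldots, \lambda_R$ (some $R\times R$ minor $(s_{\lambda_i+\gamma_j})_{i,j}$ is non-zero), and that minor sits inside $M(n)$ once $n \geq \max(\max_i |\lambda_i|, \max_j |\gamma_j|)$, giving $\rank M(n) \geq R$ with no limiting process. Your route has the modest virtue of not having to locate the witnessing rows, but for a purely linear-algebraic statement the direct extraction of a finite minor is both shorter and more natural. (The paper actually asserts that the \emph{principal} submatrix on $\{\gamma_1,\ldots,\gamma_R\}$ does the job; this is legitimate in the finite-rank case via the non-degeneracy of $(\cdot,\cdot)_s$ on $\cC_{M(\infty)}$, but in the infinite-rank case one should, strictly speaking, allow the row indices to be chosen separately, as above.)
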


\begin{proof}
Suppose $\rank M(\infty)$ is not finite and let $\Gamma = \{ X^{\gamma^{(m)}} \}_{m=1}^{\infty}$ be a basis for $\cC_{M(\infty)}$. Let $\Gamma_n$ be a nonempty subset of $\Gamma$ with $\card \Gamma_n = n$ and $\Phi(\Gamma_n) \in \RR^{n \times n}$ denote the principal submatrix of $M(\infty)$ with rows and columns indexed by $\Gamma_n$. The matrix $\Phi(\Gamma_n)$ can be realised as a submatrix of $$M(m_n) \quad \quad {\rm for} \quad m_n := \max_{\gamma \in \Gamma_n} |\gamma|\in \NN_0.$$ Consequently,
$$\rank M(m_n) \geq n$$
and thus we have \eqref{eq:Feb17ranksup3} when $\rank M(\infty)$ is not finite.

If $r := \rank M(\infty) < \infty$, then let $(X^{\gamma})_{\gamma \in \Gamma_r}$ be a basis for $\cC_{M(\infty)}$. Then let $\Phi(\Gamma_r) \in \RR^{r \times r}$ be the principal submatrix of $M(\infty)$ with rows and columns indexed by $\Gamma_r$ and
$$n_r := \max_{\gamma \in \Gamma_r} |\gamma|.$$
Then for $n \geq n_r \in \NN_0$, 
$\Phi(\Gamma_n)$ is a submatrix of $M(n)$. Thus,
$$\rank M(n) \geq r.$$
On the other hand, since $(X^{\lambda})_{\lambda \in \NN_0^d \, \setminus \, \Gamma_n}$ can be written as
$$X^{\lambda} = \sum_{\gamma \in \Gamma_n} c_{\gamma} X^{\gamma},$$
we have 
$$\rank M(n) \leq r \quad\quad {\rm for} \quad n \geq n_r.$$ 
Thus, $\rank M(n) = r$ for $n \geq n_r$ and we have \eqref{eq:Feb17ranksup3} with $\sup$ replaced by $\max$.
\end{proof}

\begin{cor}
\label{cor:Feb17rank}
Let $s = (s_{\gamma})_{\gamma \in \NN_0^d}$ be a given real-valued multisequence. If
$$r := \rank M(\infty) < \infty,$$
then $s$ has $\kappa$-negative squares for some $\kappa \leq r$.
\end{cor}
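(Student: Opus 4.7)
The plan is to derive the uniform bound on $i_-(M(n))$ from the rank bound supplied by Lemma \ref{lem:Feb17rl3}, then extract a stabilisation value of $i_-(M(n))$ as $n \to \infty$.

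First I would invoke Lemma \ref{lem:Feb17rl3} to conclude that $\rank M(n) \leq r$ for every $n \in \NN_0$ (and in fact $\rank M(n) = r$ for $n$ sufficiently large). Since $i_-(A) \leq \rank A$ for any Hermitian matrix $A$, this immediately gives
\begin{equation*}
i_-(M(n)) \leq r \quad \text{for all } n \in \NN_0.
\end{equation*}

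Next I would observe that $M(n)$ is a principal submatrix of $M(n+1)$, since the indexing monomials $(X^{\lambda})_{0 \leq |\lambda | \leq n}$ form a subset of $(X^{\lambda})_{0 \leq |\lambda| \leq n+1}$ under the graded lexicographic ordering and the entries coincide by construction of the $d$-Hankel matrix. By Cauchy's interlacing theorem (see, e.g., Theorem 4.3.17 in \cite{HJ}), it follows that
\begin{equation*}
i_-(M(n)) \leq i_-(M(n+1)) \quad \text{for all } n \in \NN_0.
\end{equation*}
Hence $\bigl(i_-(M(n))\bigr)_{n=0}^{\infty}$ is a non-decreasing sequence of nonnegative integers bounded above by $r$.

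Such a sequence must stabilise: there exists $\kappa \in \{0, 1, \ldots, r\}$ and some $n_0 \in \NN_0$ such that $i_-(M(n)) = \kappa$ for all $n \geq n_0$ and $i_-(M(n)) \leq \kappa$ for all $n \in \NN_0$. By Definition \ref{def:Oct27kns}, this says precisely that $s$ has $\kappa$-negative squares with $\kappa \leq r$. There is no serious obstacle here; the only thing to double-check is that the principal-submatrix relationship between $M(n)$ and $M(n+1)$ is correctly set up by the ordering convention $\preceq_L$, which is immediate from Definition \ref{def:Feb13dH}.
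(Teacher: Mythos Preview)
Your proof is correct and follows essentially the same route as the paper: invoke Lemma~\ref{lem:Feb17rl3} to bound $\rank M(n)$ by $r$, then use $i_-(M(n)) \leq \rank M(n)$ to conclude that $s$ has $\kappa$-negative squares for some $\kappa \leq r$. Your interlacing step establishing monotonicity of $(i_-(M(n)))_n$ is correct but not strictly required, since the supremum of a set of nonnegative integers bounded above by $r$ is automatically attained; the paper's proof simply omits this step.
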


\begin{proof}
If $r := \rank M(\infty) < \infty$, then it follows from Lemma \ref{lem:Feb17rl3} that
$$\max_{n \in \NN_0} \rank M(n) = r.$$
Since $M(n)$ is Hermitian,
$$i_-(M(n)) + i_+(M(n)) = \rank M(n) = r,$$
where $i_-(M(n))$ and $i_+(M(n))$ denote the number of negative and the number of positive eigenvalues of the Hermitian matrix $M(n)$. Thus, 
$$i_-(M(n)) \leq \rank M(n) \quad \quad {\rm for} \quad n \in \NN_0$$
and consequently $s$ has a $\kappa$-negative squares for some $\kappa \leq r$.
\end{proof}

\begin{lem}
\label{lem:ideal}
Let $s = (s_{\gamma})_{\gamma \in \NN_0^d}$ be a given real-valued multisequence. If
\begin{equation}
\label{eq:i1}
\cI = \{ p \in \RR[x_1, \ldots, x_d]: p(X) = {\bf 0} \},
\end{equation}
then $\cI$ is an ideal of $\RR[x_1, \ldots, x_d].$
\end{lem}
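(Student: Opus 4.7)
The plan is to show the two defining properties of an ideal in turn: that $\cI$ is a linear subspace of $\RR[x_1,\ldots,x_d]$, and that $\cI$ absorbs multiplication by arbitrary polynomials. The first property is immediate because the map $p \mapsto p(X)$ from $\RR[x_1,\ldots,x_d]$ into the column space $\cC_{M(\infty)}$ is $\RR$-linear by construction: given $p,q \in \cI$ and $c \in \RR$, the column vector $(p+cq)(X)$ equals $p(X)+c\,q(X) = {\bf 0}$, so $p+cq \in \cI$, and clearly $0 \in \cI$.

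The main work is the absorption property. It suffices to show that if $p \in \cI$ then $x_j\, p \in \cI$ for each $j = 1,\ldots,d$, since then iterating gives $x^{\alpha} p \in \cI$ for every $\alpha \in \NN_0^d$, and linearity extends this to $q\,p \in \cI$ for every $q \in \RR[x_1,\ldots,x_d]$. Writing $p(x) = \sum_{\gamma} p_{\gamma}\, x^{\gamma}$, the hypothesis $p(X) = {\bf 0}$ means exactly that the $\lambda$-th entry of the column vector $p(X) \in \cC_{M(\infty)}$ is zero for every $\lambda \in \NN_0^d$, i.e.
\[
\sum_{\gamma} p_{\gamma}\, s_{\gamma+\lambda} = 0 \qquad \text{for all } \lambda \in \NN_0^d.
\]
Now $(x_j p)(x) = \sum_{\gamma} p_{\gamma}\, x^{\gamma+{\rm e}_j}$, and so the $\lambda$-th entry of $(x_j p)(X)$ is
\[
\sum_{\gamma} p_{\gamma}\, s_{\gamma+{\rm e}_j+\lambda} \;=\; \sum_{\gamma} p_{\gamma}\, s_{\gamma + (\lambda+{\rm e}_j)} \;=\; 0,
\]
the last equality being the hypothesis applied to the shifted index $\lambda + {\rm e}_j \in \NN_0^d$. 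Hence $(x_j p)(X) = {\bf 0}$, i.e.\ $x_j p \in \cI$.

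I do not expect any serious obstacle here: the key point is simply that the Hankel structure of $M(\infty)$ (each entry depends only on the sum of row and column indices) is precisely what identifies multiplication by $x_j$ on polynomials with the coordinate shift $\lambda \mapsto \lambda + {\rm e}_j$ on the column-space indexing. Once this shift identity is recorded, the ideal property is automatic.
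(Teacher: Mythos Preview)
Your proof is correct and follows essentially the same approach as the paper: both exploit the Hankel structure of $M(\infty)$ to recognise that multiplying $p$ by a monomial corresponds to a shift in the row index $\lambda$, so the vanishing of $p(X)$ immediately gives the vanishing of $(x^{\xi}p)(X)$. The only cosmetic difference is that you reduce first to single variables $x_j$ and then iterate, whereas the paper handles a general monomial $x^{\xi}$ in one step; the underlying shift argument is identical.
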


\begin{proof}
If $\cI = \emptyset$, then there is nothing to prove. If $\cI \neq \emptyset$, then the only nontrivial thing to check is that if $p(x) = \sum_{0 \leq |\gamma| \leq m} p_{\gamma} x^{\gamma}$ and $p(X) = {\bf 0}$, then 
\begin{equation}
\label{eq:c1}
(pq)(X) = {\bf 0} \quad \quad {\rm for} \quad q \in \RR[x_1, \ldots, x_d].
\end{equation}
If $p(X) = {\bf 0}$, then
\begin{equation}
\label{eq:c2}
\sum_{0 \leq |\gamma| \leq m} p_{\gamma} {\rm col}(s_{\gamma+\lambda})_{\lambda \in \NN_0^d} = {\bf 0}.
\end{equation}
But as
$$(x^{\xi}p)(X) = {\rm col}(s_{\gamma+\xi+\eta})_{\eta \in \NN_0^d} = {\bf 0}$$
is verified by putting $\lambda = \xi + \eta$ in \eqref{eq:c2}, 
\eqref{eq:c1} follows by linearity. 
\end{proof}

Given $s= (s_{\gamma})_{\gamma \in \NN_0^d}$, consider the quotient space
\begin{equation}
\label{eq:Feb15q1}
\cA = \RR[x_1, \ldots, x_d] \, / \, \cI,
\end{equation}
endowed with the Hermitian form 
\begin{equation}
\label{eq:June20a3}
( p+\cI, q + \cI )_{\cA} = (p(E), q(E))_s \quad \quad {\rm for} \quad p,q, \in \RR[x_1, \ldots, x_d].
\end{equation}
We will be in need of the shift operators $\Theta_j: \cA \to \cA$ for $j=1,\ldots,d$, given by
\begin{equation}
\label{eq:Feb15shift}
\Theta_j(p+\cI) = x^{ {\rm e}_j} p(x) + \cI \quad \text{for $p+\cI \in \cA$ and $j=1,\ldots, d$.}
\end{equation}

\begin{lem}
\label{lem:Oct28isomorph0}
Let $s = (s_{\gamma})_{\gamma \in \NN_0^d}$ be a given real-valued multisequence. If $r = \rank M(\infty) < \infty$, then $T(s)$, $\cA$ and $\cC_{M(\infty)}$ {\rm (}endowed with the Hermitian form $(\cdot, \cdot)_s${\rm )} are isomorphic Pontryagin spaces with $\kappa$-negative squares.
\end{lem}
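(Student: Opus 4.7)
The plan is to exhibit natural isometric isomorphisms
$$T(s) \xleftarrow{\phi_1} \cA \xrightarrow{\phi_2} \cC_{M(\infty)}$$
given by $\phi_1(p + \cI) := p(E)s$ and $\phi_2(p + \cI) := p(X)$, and then to read off the Pontryagin structure from the inertia of $M(\infty)$.

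First I would establish the three-way equivalence
$$p \in \cI \;\Longleftrightarrow\; p(X) = {\bf 0} \;\Longleftrightarrow\; p(E)s = 0.$$
The first two conditions coincide by the definition \eqref{eq:i1}. The identification $p(X) = \col(\sum_{\gamma} p_{\gamma} s_{\gamma+\lambda})_{\lambda \in \NN_0^d}$ coupled with $(p(E)s)(\lambda) = \sum_{\gamma} p_{\gamma} s_{\gamma+\lambda}$ makes the second and third conditions equivalent. This forces $\phi_1$ and $\phi_2$ to be well-defined, linear and bijective, surjectivity being immediate from the respective definitions of $T(s)$ and $\cC_{M(\infty)}$.

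Next I would verify that the three Hermitian forms correspond under these maps. By \eqref{eq:June20a3}, $(p+\cI, q+\cI)_{\cA} = (p(E), q(E))_s = \sum_{\lambda, \xi} p_{\lambda} q_{\xi} s_{\lambda+\xi}$; by \eqref{eq:Oct27rp3} the same scalar is what $(\cdot,\cdot)_s$ assigns to $(p(E)s, q(E)s)$ on $T(s)$, since it only depends on these images; and on $\cC_{M(\infty)}$ the form with Gram matrix $(s_{\lambda+\xi})$ produces the identical expression, which one checks is well defined on $\cC_{M(\infty)}$ via the three-way equivalence. Hence $\phi_1$ and $\phi_2$ are isometries.

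Finally, since $r := \rank M(\infty) = \dim \cC_{M(\infty)} < \infty$ (see \eqref{eq:Feb17rank1}), all three spaces are $r$-dimensional, and therefore automatically complete; nondegeneracy of the form on $T(s)$ from \eqref{eq:Oct28pp3} transfers via the isometries. By Lemma \ref{lem:Feb17rl3} and the definition of $\kappa$-negative squares, for $n$ sufficiently large $\rank M(n) = r$ and $i_-(M(n)) = \kappa$, so $i_+(M(n)) = r - \kappa$. A dimension count then forces $\cN_{M(n)} = \cI \cap \RR^d_n[{\bf x}]$ for such $n$, realising $\cA$ as the nondegenerate quotient of $(\RR^d_n[{\bf x}], M(n))$ by its radical, of signature $(r-\kappa, \kappa)$. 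Consequently all three spaces are real Pontryagin spaces with $\kappa$-negative squares. The main obstacle is the bookkeeping aligning the three a priori different Hermitian forms; once the three-way equivalence above is in hand, everything else is formal.
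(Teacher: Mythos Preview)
Your proposal is correct and follows essentially the same route as the paper: the paper also defines the map $\varphi(p+\cI)=p(X)$, checks it is a well-defined linear bijection via $p-q\in\cI\Leftrightarrow (p-q)(X)={\bf 0}$, and declares the $T(s)\cong\cC_{M(\infty)}$ isomorphism ``immediate''. Your write-up is in fact more complete than the paper's, since you spell out the three-way equivalence $p\in\cI\Leftrightarrow p(X)={\bf 0}\Leftrightarrow p(E)s=0$ and, via the dimension count $\dim\cN_{M(n)}=\ell(n,d)-r=\dim(\cI\cap\RR^d_n[{\bf x}])$ for $n$ large, explicitly extract the signature $(r-\kappa,\kappa)$ from the inertia of $M(n)$, a step the paper leaves implicit.
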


\begin{proof}
The fact that $T(s)$ and $\cC_{\infty}$ are isomorphic is immediate (see Definition \ref{def:June20d1} for a precise definition of what it means for two Pontryagin spaces to be isomorphic and also Remark \ref{rem:June20r1} if need be). 
We will now show that $\cA$ and $\cC_{M(\infty)}$ are isomorphic. Let $\varphi: \cA \to \cC_{M(\infty)}$ be given by
$$\varphi(p + \cI) = p(X) \quad \quad {\rm for} \quad p \in \RR[x_1, \ldots, x_d].$$
$\varphi$ is obviously linear, well-defined and surjective. To see that $\varphi$ is injective, note that
$$\varphi(p + \cI) = \varphi(q+ \cI) \Longleftrightarrow p(X) = q(X) \Longleftrightarrow (p-q)(X) = {\bf 0} \Longleftrightarrow p - q \in \cI.$$
Finally, as
$$(\varphi(p+\cI), \varphi(q+\cI) )_{\cA} = (p, q)_s,$$
we have the desired isomorphism. 
\end{proof}

\begin{lem}
\label{lem:Oct28isomorph10}
Suppose $s = (s_{\gamma})_{\gamma \in \NN_0^d}$ is a given real-valued multisequence and 
$$r:= {\rm rank} \, M(\infty) < \infty.$$ Then the variety of the ideal $\cI \subseteq \RR[x_1, \ldots, x_d]$, i.e.,
$$\cV(\cI) := \{ z \in \CC^d: \text{$p(z)  =0$ {\rm for all} $p \in \cI$}\}$$
satisfies
\begin{equation}
\label{eq:Feb15d4}
\ell := \card \cV(\cI) \leq r.
\end{equation}
\end{lem}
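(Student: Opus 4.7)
The plan is to use the isomorphism $\cA \cong \cC_{M(\infty)}$ established in Lemma \ref{lem:Oct28isomorph0}, which gives $\dim_\RR \cA = r$, together with evaluation functionals at points of $\cV(\cI)$. Assuming for contradiction that $\card \cV(\cI) \geq r+1$, I would pick distinct points $z^{(1)}, \ldots, z^{(r+1)} \in \cV(\cI)$ and define, for each $q$,
$$
\mathrm{ev}_{z^{(q)}}: \cA \To \CC, \quad \mathrm{ev}_{z^{(q)}}(p+\cI) := p(z^{(q)}),
$$
which is well-defined because every $p \in \cI$ vanishes on $\cV(\cI)$ by the very definition of the variety. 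Each $\mathrm{ev}_{z^{(q)}}$ is $\RR$-linear, so it lies in $\mathrm{Hom}_\RR(\cA, \CC)$; this space carries a natural $\CC$-vector space structure via scalar multiplication on the target, and its complex dimension equals $\dim_\RR \cA = r$.

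The heart of the argument is to verify that $\mathrm{ev}_{z^{(1)}}, \ldots, \mathrm{ev}_{z^{(r+1)}}$ are $\CC$-linearly independent in $\mathrm{Hom}_\RR(\cA, \CC)$. Suppose $\sum_q c_q \mathrm{ev}_{z^{(q)}} \equiv 0$ with $c_q \in \CC$, i.e.\ $\sum_q c_q p(z^{(q)}) = 0$ for every $p \in \RR[x_1, \ldots, x_d]$. Since the $z^{(q)}$ are distinct in $\CC^d$, standard Lagrange interpolation over $\CC$ furnishes $q_i \in \CC[x_1, \ldots, x_d]$ with $q_i(z^{(j)}) = \delta_{ij}$. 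Writing $q_i = q_i^R + i\, q_i^I$ with $q_i^R, q_i^I$ real and testing the vanishing relation against each of $q_i^R$ and $q_i^I$ separately, then combining by linearity over $\CC$, yields $c_i = \sum_q c_q q_i(z^{(q)}) = 0$ for every $i$. Hence $r+1 \leq \dim_\CC \mathrm{Hom}_\RR(\cA, \CC) = r$, the desired contradiction.

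The main obstacle I anticipate is the mismatch between the real scalar field of the ideal $\cI \subseteq \RR[x_1, \ldots, x_d]$ and the complex nature of the variety $\cV(\cI) \subseteq \CC^d$; the device above (splitting a complex interpolant into real and imaginary parts) handles this cleanly, but one must remain attentive to the real-versus-complex structure throughout. An equivalent route would be to extend scalars and work with $\cA \otimes_\RR \CC$, reducing to the standard fact that $\card \cV(J) \leq \dim_\CC \CC[x_1, \ldots, x_d]/J$ for any zero-dimensional complex ideal $J$.
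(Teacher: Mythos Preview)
Your argument is correct. The paper itself does not give a proof here: it simply defers to an external reference (Theorem~2.6 in \cite{Laurent}), which records the standard fact that for a zero-dimensional ideal the cardinality of its complex variety is bounded by the dimension of the quotient algebra. Your evaluation-functional argument is precisely a self-contained proof of that fact, with the additional care of tracking the real-versus-complex scalar field. In particular, your computation $\dim_{\CC}\mathrm{Hom}_{\RR}(\cA,\CC)=\dim_{\RR}\cA=r$ and the interpolation step splitting $q_i=q_i^R+i\,q_i^I$ are both correct and handle exactly the subtlety you flagged. The alternative route you mention at the end---extending scalars to $\cA\otimes_{\RR}\CC$ and quoting the complex result directly---is in effect what the paper's citation does implicitly; your approach and the cited result are equivalent, yours simply unpacks the mechanism.
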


\begin{proof}
See, e.g., Theorem 2.6 in \cite{Laurent}.
\end{proof}

Let $s$, $r$ and $\ell$ be as in Lemma \ref{lem:Oct28isomorph0}. Fix a basis $\cB := \{ p^{(1)}+ \cI, \ldots, p^{(r)} + \cI \}$ of $\cA = \RR[x_1, \ldots, x_d] / \cI$ (it follows immediately from Lemma \ref{lem:Oct28isomorph0} and Remark \ref{rem:June20r1} and write
that $\dim \cA = r$) and write
$$\cV(\cI) = \{ z^{(1)}, \ldots, z^{(\ell)} \}.$$
With a slight abuse of notation we let $\Theta_j$ refer to the matrix representation of $\Theta_j$ in $\RR^{r \times r}$ with respect to the basis $\cB$ for $j=1, \ldots, d$. The matrix $\Theta_j^T \in \CC^{r \times r}$ will be the usual transpose of $\Theta_j$ and let
\begin{equation}
\label{eq:Zeta1}
\xi^{z^{(k)}} := \col( p^{(q)}(z^{(k)}) )_{q=1}^{\ell}  \in \CC^r.
\end{equation}

\begin{thm}
\label{thm:June20t1}
If $s = (s_{\gamma})_{\gamma \in \NN_0^d}$ is a given real-valued multisequence such that 
$r:= {\rm rank} \, M(\infty) < \infty$ and $\xi^{z^{(k)}}$ are as in {\rm \eqref{eq:Zeta1}}, then the following facts hold{\rm :}
\begin{enumerate}
\item[(i)] $\xi^{z^{(1)}}, \ldots, \xi^{z^{(\ell)}}$ are linearly independent in $\CC^r$.
\item[(ii)] $\sigma(\Theta_j) = \sigma (\Theta_j^T) = \{ \pi_j(z^{(1)}), \ldots, \pi_j(z^{(\ell)})  \}$. Moreover,
\begin{equation}
\label{eq:June20d7}
\text{$\Theta_j^T \xi^{z^{(k)}} = \pi_j(z^{(k)}) \xi^{z^{(k)}}$ for $k=1,\ldots,r$ and $j=1,\ldots, d$.}
\end{equation}
\end{enumerate}
If, in addition, $\Theta_j$ is diagonalisable for $j=1,\ldots,d$, then
\begin{equation}
\label{eq:June20d6}
\text{$\card \cV(\cI) = r$, i.e., $\ell =r$.} 
\end{equation}
\end{thm}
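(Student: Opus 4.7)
The plan is to build everything on the single algebraic identity that drops out of the definition of the matrix representation of $\Theta_j$. Writing $x_j p^{(a)} + \cI = \sum_{b=1}^{r} (\Theta_j)_{b,a}(p^{(b)}+\cI)$, so that $x_j p^{(a)} - \sum_b (\Theta_j)_{b,a}\, p^{(b)} \in \cI$, evaluation at any $z \in \cV(\cI)$ gives
$$\pi_j(z)\, p^{(a)}(z) \;=\; \sum_{b=1}^{r} (\Theta_j)_{b,a}\, p^{(b)}(z),$$
which, read column by column for $a=1,\ldots,r$, is precisely the eigenvalue equation $\Theta_j^T \xi^{z^{(k)}} = \pi_j(z^{(k)})\,\xi^{z^{(k)}}$ in (ii). The vectors $\xi^{z^{(k)}}$ are nonzero because $1+\cI \in \cA$ lies in the span of $\cB$, so there are real scalars $a_q$ with $\sum_q a_q\,p^{(q)}(z^{(k)}) = 1$.

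For (i), suppose $\sum_{k=1}^{\ell} c_k \xi^{z^{(k)}} = 0$ in $\CC^r$, i.e.\ $\sum_k c_k\, p^{(q)}(z^{(k)}) = 0$ for every $q$. For any real polynomial $p$, expanding $p + \cI = \sum_q a_q(p^{(q)}+\cI)$ and evaluating at $z^{(k)} \in \cV(\cI)$ yields $p(z^{(k)}) = \sum_q a_q\, p^{(q)}(z^{(k)})$, from which $\sum_k c_k\, p(z^{(k)}) = \sum_q a_q \sum_k c_k\, p^{(q)}(z^{(k)}) = 0$. Splitting into real and imaginary parts extends this to every $p \in \CC[x_1,\ldots,x_d]$, and complex Lagrange interpolation at the distinct nodes $z^{(1)},\ldots,z^{(\ell)}$ forces $c_k = 0$ for all $k$.

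The nontrivial half of (ii) is the reverse containment $\sigma(\Theta_j) \subseteq \{\pi_j(z^{(k)})\}_{k=1}^\ell$. Because $\Theta_1,\ldots,\Theta_d$ are multiplication operators on $\cA$ they commute, and hence so do their transposes; over $\CC$, a commuting family admits a simultaneous Schur form, so there is a basis in which each $\Theta_j^T$ is upper triangular with diagonal entries $\mu_1^j,\ldots,\mu_r^j$. For every $p \in \cI$, $p(\Theta_1^T,\ldots,\Theta_d^T) = p(\Theta_1,\ldots,\Theta_d)^T = 0$, and because the diagonal of a product of (commuting) upper-triangular matrices is the product of diagonals, evaluating the $(i,i)$ entry gives $p(\mu_i^1,\ldots,\mu_i^d) = 0$. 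Hence each diagonal tuple $(\mu_i^1,\ldots,\mu_i^d)$ lies in $\cV(\cI) = \{z^{(1)},\ldots,z^{(\ell)}\}$, so $\mu_i^j \in \{\pi_j(z^{(k)})\}_k$. Together with $\sigma(\Theta_j) = \sigma(\Theta_j^T)$, this finishes (ii).

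For the final claim, assume each $\Theta_j$ (equivalently each $\Theta_j^T$) is diagonalisable. Commuting diagonalisable matrices are simultaneously diagonalisable, so $\CC^r = \bigoplus_{k=1}^\ell V_{z^{(k)}}$, where $V_{z^{(k)}}$ is the common eigenspace for the tuple $z^{(k)}$ (the only possible labels, by the Schur argument above). It remains to show $\dim V_{z^{(k)}} = 1$. Fix $v \in V_{z^{(k)}}$ and define the $\CC$-linear functional $L_v(p+\cI) = v^T [p+\cI]_{\cB}$ on $\cA \otimes \CC$. The identity $\Theta_j^T v = \pi_j(z^{(k)})\, v$ rewrites as $L_v(x_j(p+\cI)) = \pi_j(z^{(k)})\, L_v(p+\cI)$, and iterating in each coordinate gives $L_v(p+\cI) = p(z^{(k)})\, L_v(1+\cI)$ for every polynomial $p$. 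Setting $p = p^{(q)}$ reads $v_q = L_v(1+\cI)\, p^{(q)}(z^{(k)})$, so $v = L_v(1+\cI)\,\xi^{z^{(k)}}$; as $v \neq 0$, $V_{z^{(k)}}$ is spanned by $\xi^{z^{(k)}}$. Summing dimensions yields $r = \sum_{k=1}^\ell \dim V_{z^{(k)}} = \ell$. The main obstacle is the simultaneous Schur triangularisation step, whose correct setup requires complexifying scalars so that the real operators $\Theta_j^T$ act on $\CC^r$ as a commuting family admitting a common upper-triangular form.
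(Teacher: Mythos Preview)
Your proof is correct and self-contained. The paper, by contrast, does not prove (i) and (ii) from scratch: it cites Laurent's survey, attributing (i) to the discussion preceding Theorem~2.9 there and (ii) to the Stickelberger Eigenvalue Theorem (Theorem~2.9 in \cite{Laurent}). Your route to (ii) via simultaneous Schur triangularisation of the commuting family $\{\Theta_j^T\}$ is precisely the standard proof of Stickelberger's theorem, so while the presentation differs, the underlying mechanism is the same.

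For the final claim \eqref{eq:June20d6}, your argument is in fact more thorough than the paper's. The paper simply asserts that if each $\Theta_j^T$ is diagonalisable then the vectors $\xi^{z^{(1)}},\ldots,\xi^{z^{(\ell)}}$ must span $\CC^r$, from which $\ell = r$. This leaves implicit the point that these are, up to scalar, the \emph{only} joint eigenvectors. You fill that gap: using the functional $L_v(p+\cI) = v^T[p+\cI]_{\cB}$ you show that any vector in a common eigenspace $V_{z^{(k)}}$ is a scalar multiple of $\xi^{z^{(k)}}$, so each common eigenspace is one-dimensional, and then count dimensions. The extra work buys you a cleaner proof; the paper's version relies on the reader supplying exactly this kind of argument (or, again, deferring to the cited reference).
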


\begin{proof}
A proof for assertion (i) can be found shortly before the statement of Theorem 2.9 in \cite{Laurent}. Assertion (ii) is the Stickelberger Eigenvalue Theorem (see, e.g., Theorem 2.9 in \cite{Laurent}). Let us now prove \eqref{eq:June20d6}. Since
$$\text{$\Theta_j \in \CC^{r \times r}$ is diagonalisable $\Longleftrightarrow  \Theta_j^T \in \CC^{r \times r}$ is diagonalisable},$$
the eigenvectors $\xi^{z^{(1)}}, \ldots, \xi^{z^{(\ell)}} \in \CC^r$ must satisfy
$${\rm span} \{ \xi^{z^{(1)}}, \ldots, \xi^{z^{(\ell)}}\} = \CC^r,$$
i.e., \eqref{eq:June20d6} holds.
\end{proof}

\begin{lem}
\label{lem:June21ef3}
Let $s = (s_{\gamma} )_{\gamma \in \NN_0^d}$ be a given real-valued multisequence and suppose $r := \rank M(\infty) < \infty$. Let $\cB := (x^{\lambda^{(a)}} + \cI)_{a=1}^r$ be a basis for the $r$-dimensional space $\cA := \RR[x_1, \ldots, x_d] / \cI$ {\rm (}see Lemma {\rm \ref{lem:Oct28isomorph0}}{\rm )} and write arbitrary polynomials $p+ \cI, q + \cI \in \cA$ with respect to $\cB$, i.e., 
$$
\text{$p(x) = \sum_{a=1}^r c_a x^{\lambda^{(a)}}$ and $q(x) = \sum_{b=1}^r d_b x^{\lambda^{(b)}}$.}
$$
Then
\begin{equation}
\label{eq:June21p4}
(p+\cI, q+\cI)_{\cA} = \langle M_{\cB}(\infty) {\bf p}, {\bf q} \rangle,
\end{equation}
where $M_{\cB}(\infty) \in \RR^{r \times r}$ denotes the invertible principal submatrix of $M(\infty)$ with rows and columns indexed by $\cB$, ${\bf p} = \col(c_a)_{a=1}^r \in \RR^r$ and ${\bf q} = \col(d_a)_{a=1}^r \in \RR^r$.
\end{lem}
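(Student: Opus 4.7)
The plan is to recognise $M_{\cB}(\infty)$ as the Gram matrix of the form $(\cdot,\cdot)_{\cA}$ with respect to the basis $\cB$, and then invoke nondegeneracy of that form to deduce invertibility.

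First I would expand both sides in coordinates. Writing $p(x) = \sum_{a=1}^r c_a x^{\lambda^{(a)}}$ and $q(x) = \sum_{b=1}^r d_b x^{\lambda^{(b)}}$ and using the definition of $(\cdot,\cdot)_{\cA}$ given in \eqref{eq:June20a3} together with the bilinear formula for $(\cdot,\cdot)_s$ from Section \ref{sec:Pontryagin}, I obtain
\begin{equation*}
(p+\cI, q+\cI)_{\cA} \;=\; (p(E),q(E))_s \;=\; \sum_{a=1}^r \sum_{b=1}^r c_a d_b \, s_{\lambda^{(a)} + \lambda^{(b)}}.
\end{equation*}
The $(a,b)$-entry of $M_{\cB}(\infty)$ is by construction $s_{\lambda^{(a)}+\lambda^{(b)}}$, so this sum coincides with $\langle M_{\cB}(\infty) \mathbf{p}, \mathbf{q}\rangle$ (standard real dot product on $\RR^r$), which gives \eqref{eq:June21p4}.

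It remains to argue that $M_{\cB}(\infty)$ is invertible. I would deduce this from nondegeneracy of the quotient form. By \eqref{eq:Oct28pp3}, the form $(\cdot,\cdot)_s$ on $T(s)$ satisfies $(p(E), q(E))_s = 0$ for every $q(E)s \in T(s)$ if and only if $p(E) s = 0$, equivalently $p \in \cI$ by \eqref{eq:i1}. Passing to the quotient, this says that $(\cdot,\cdot)_{\cA}$ is a nondegenerate symmetric bilinear form on the $r$-dimensional space $\cA$. The identity \eqref{eq:June21p4} just established identifies $M_{\cB}(\infty)$ as the Gram matrix of this nondegenerate form with respect to the basis $\cB$, and hence it is invertible.

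I do not expect any real obstacle here: the substance is the bookkeeping that the sum $\sum c_a d_b s_{\lambda^{(a)}+\lambda^{(b)}}$ is exactly $\langle M_{\cB}(\infty) \mathbf{p}, \mathbf{q}\rangle$, plus the observation that nondegeneracy on $T(s)$ passes to nondegeneracy on the quotient $\cA$. A more hands-on alternative to the invertibility argument would use Lemma \ref{lem:Oct28isomorph0} to transfer $\cB$ to a basis $(X^{\lambda^{(a)}})_{a=1}^r$ of $\cC_{M(\infty)}$ and then, using Hermitian symmetry of $M(\infty)$, show that any kernel vector $\alpha$ of $M_{\cB}(\infty)$ would produce a column $\sum \alpha_a X^{\lambda^{(a)}}$ in $\cC_{M(\infty)}$ whose entries at the indices $\lambda^{(1)},\ldots,\lambda^{(r)}$ all vanish, forcing it to be zero and hence $\alpha = 0$; but the nondegeneracy route above is shorter.
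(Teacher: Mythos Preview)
Your proof of the identity \eqref{eq:June21p4} is exactly the paper's argument: expand bilinearly and recognise $s_{\lambda^{(a)}+\lambda^{(b)}}$ as the $(a,b)$-entry of $M_{\cB}(\infty)$. The paper's proof stops there and does not separately justify the word ``invertible'' in the statement; your additional paragraph deriving invertibility from the nondegeneracy in \eqref{eq:Oct28pp3} is a correct and welcome supplement.
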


\begin{proof}
Formula \eqref{eq:June21p4} follows from
\begin{align*}
(p+ \cI, q+\cI)_{\cA} =& \; \left( \sum_{a=1}^r c_a x^{\lambda^{(a)}}, \sum_{b=1}^r d_b x^{\lambda^{(b)}} \right)_s \\
=& \; \sum_{a,b=1}^r c_a d_b (x^{ \lambda^{(a)}}, x^{ \lambda^{(b)}} )_s \\
=& \; \sum_{a,b=1}^r c_a d_b s_{\lambda^{(a)} + \lambda^{(b)} } \\
=& \; \langle M_{\cB}(\infty) {\bf p}, {\bf q} \rangle.
\end{align*}
\end{proof}

\begin{defn}
\label{def:SUB}
Corresponding to any finite subset $\cB$ of $\NN_0^d$, let 
$$\cB + {\rm e}_j := \{ \lambda + {\rm e}_j: \lambda \in \cB \} \quad \quad {\rm for} \quad j=1,\ldots,d,$$
$M_{\cB}(\infty)$ denote the principal submatrix of $M(\infty)$ with rows and columns indexed by $\cB$ and $M_{\cB,\cB + {\rm e}_j}(\infty)$ be the submatrix of $M(\infty)$ with rows indexed by $\cB$ and columns indexed by $\cB + {\rm e}_j$
\end{defn}

\begin{lem}
\label{lem:July18prelim1}
Let $s = (s_{\gamma} )_{\gamma \in \NN_0^d}$ be a given multisequence such that
$$r := \rank M(\infty) < \infty.$$
Let $\cB = \{ X^{\lambda^{(a)} } \}_{a=1}^r$ be a basis for $\cC_{M(\infty)}$. If $s$ has an $r$-atomic quasi-complex representing measure $\mu = \sum_{a=1}^r \varrho_a \delta_{z^{(a)}}$, The following factorisations hold{\rm :}
\begin{equation}
\label{eq:FF1}
M_{\cB}(\infty) = V_{\cB}^T R V_{\cB}
\end{equation}
and
\begin{equation}
\label{eq:FF2}
M_{\cB, \cB + {\rm e}_j } = V_{\cB}^T R Z^{(j)} V_{\cB} \quad\quad {\rm for} \quad j=1, \ldots, d,
\end{equation}
where $V_{\cB}$ is the Vandermonde matrix based on $z^{(1)}, \ldots, z^{(r)}$ and $\cB$ {\rm (}see Definition \ref{def:VM}{\rm )}, $R = {\rm diag}(\varrho_1, \ldots, \varrho_r)$ and $Z^{(j)} = {\rm diag}( (z^{(1)})^{ {\rm e}_j } , \ldots, (z^{(r)})^{ {\rm e}_j } )$ for $j=1,\ldots, d$. Moreover, $V_{\cB}$ is invertible.
\end{lem}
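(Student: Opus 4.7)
The proof is essentially an unpacking of definitions once the representing measure is put to work. My plan is as follows.

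\textbf{Step 1: use the representing measure to evaluate moments.} Since $\mu = \sum_{a=1}^r \varrho_a \delta_{z^{(a)}}$ is a quasi-complex representing measure for $s$, every moment can be written as
\begin{equation*}
s_{\gamma} \;=\; \int_{\CC^d} z^{\gamma}\,d\mu(z) \;=\; \sum_{c=1}^{r} \varrho_c\,(z^{(c)})^{\gamma}, \qquad \gamma \in \NN_0^d.
\end{equation*}
This formula is the only analytic input; everything else is linear algebra.

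\textbf{Step 2: verify \eqref{eq:FF1} and \eqref{eq:FF2} entrywise.} By Definition~\ref{def:VM}, the entry of $V_{\cB}$ in row $c$ and column indexed by $\lambda^{(b)}\in\cB$ is $(z^{(c)})^{\lambda^{(b)}}$. Then the $(a,b)$ entry of $V_{\cB}^T R V_{\cB}$ equals
\begin{equation*}
\sum_{c=1}^{r} (z^{(c)})^{\lambda^{(a)}}\,\varrho_c\,(z^{(c)})^{\lambda^{(b)}} \;=\; \sum_{c=1}^{r} \varrho_c\,(z^{(c)})^{\lambda^{(a)}+\lambda^{(b)}} \;=\; s_{\lambda^{(a)}+\lambda^{(b)}},
\end{equation*}
which is exactly the $(a,b)$ entry of $M_{\cB}(\infty)$; this proves \eqref{eq:FF1}. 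For \eqref{eq:FF2}, the same computation with the extra factor $Z^{(j)}$ inserts a power $(z^{(c)})^{{\rm e}_j}$, giving
\begin{equation*}
\bigl(V_{\cB}^T R Z^{(j)} V_{\cB}\bigr)_{a,b} \;=\; \sum_{c=1}^{r} \varrho_c\,(z^{(c)})^{\lambda^{(a)}+\lambda^{(b)}+{\rm e}_j} \;=\; s_{\lambda^{(a)}+\lambda^{(b)}+{\rm e}_j},
\end{equation*}
which matches the entry of $M_{\cB,\cB+{\rm e}_j}(\infty)$ in row $\lambda^{(a)}$ and column $\lambda^{(b)}+{\rm e}_j$.

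\textbf{Step 3: invertibility of $V_{\cB}$.} Lemma~\ref{lem:June21ef3} already records that $M_{\cB}(\infty)\in\RR^{r\times r}$ is invertible (this is the standard fact that for a symmetric matrix a principal submatrix indexed by a column basis is nonsingular). Since $\mu$ is genuinely $r$-atomic, the weights $\varrho_1,\ldots,\varrho_r$ are all nonzero, so $R={\rm diag}(\varrho_1,\ldots,\varrho_r)$ is invertible. The factorisation \eqref{eq:FF1} then forces
\begin{equation*}
r \;=\; \rank M_{\cB}(\infty) \;=\; \rank\bigl(V_{\cB}^T R V_{\cB}\bigr) \;\leq\; \rank V_{\cB} \;\leq\; r,
\end{equation*}
and since $V_{\cB}\in\CC^{r\times r}$ is square of full rank, it is invertible.

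There is no real obstacle here: the content of the lemma is entirely a bookkeeping identity, and the only point that requires any thought is Step~3, which in turn reduces to the already-established invertibility of the principal submatrix $M_{\cB}(\infty)$.
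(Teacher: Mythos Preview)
Your proof is correct and follows the same approach as the paper: the factorisations are verified directly from the moment formula $s_\gamma=\sum_c\varrho_c(z^{(c)})^\gamma$, and invertibility of $V_{\cB}$ is deduced from the invertibility of $M_{\cB}(\infty)$ via the factorisation \eqref{eq:FF1}. The paper's version is simply a terser statement of the same argument (and does not even bother to note that $R$ is invertible, since the rank inequality $\rank(V_{\cB}^T R V_{\cB})\leq\rank V_{\cB}$ holds regardless).
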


\begin{proof}
The factorisations \eqref{eq:FF1} and \eqref{eq:FF2} are both an immediate consequence of 
$$s_{\gamma} = \sum_{a=1}^r \varrho_j (z^{(a)})^{\gamma} \quad\quad {\rm for} \quad \gamma \in \NN_0^d.$$
Since $M_{\cB}(\infty)$ is invertible and $V_{\cB}$ is a square matrix, $V_{\cB}$ must also be invertible.  
\end{proof}

\begin{rem}
\label{rem:WEIGHTS}
Since $V_{\cB} \, (1 \; 0 \; \cdots \; 0 )^T = (1 \; \cdots \; 1 )^T$, an immediate consequence of \eqref{eq:FF1} is
\begin{equation}
\label{eq:WEIGHTS}
\begin{pmatrix} \varrho_1 \\ \vdots \\ \varrho_r \end{pmatrix} = V_{\cB}^{-T} M_{\cB}(\infty) \begin{pmatrix} 1 \\ 0 \\ \vdots \\ 0 \end{pmatrix}.
\end{equation}
\end{rem}

In what follows we shall let assume $\cC_{M(\infty)}$ is finite dimensional and let $\cB = \{ X^{\lambda^{(a)} } \}_{a=1}^r$ be a basis for $\cC_{M(\infty)}$, $M_{\cB}(\infty)$ be as in Definition \ref{def:SUB} and $\Theta_j$ and $\cV(\cI)$ be as in \eqref{eq:Feb15shift} and Lemma {\rm \ref{lem:Oct28isomorph0}}, respectively.

\begin{thm}
\label{thm:Oct28fas}
Suppose $s = (s_{\gamma})_{\gamma \in \NN_0^d}$ is a given real-valued multisequence such that
$$r := {\rm rank} \, M(\infty)  < \infty.$$
If $\Theta_j$ is diagonalisable for $j=1,\ldots, d$, then there exists an $r$-atomic quasi-complex representing measure $\mu = \sum_{j=1}^r \varrho_a \delta_{z^{(a)}}$ for $s$ such that
\begin{equation}
\label{eq:Feb15o2}
\supp \mu =  \cV(\cI), 
\end{equation}
where the weights $\varrho_1, \ldots, \varrho_r$ can be computed via \eqref{eq:WEIGHTS}. If, in addition, 
\begin{equation}
\label{eq:Feb18v1}
\cV(\cI) \subseteq \RR^d,
\end{equation}
then $\mu$ is an $r$-atomic signed representing measure for $s$ whose Jordan decomposition $\mu = \mu_+ - \mu_-$ obeys
\begin{equation}
\label{eq:Hahn1}
\card \supp \mu_{\pm} = i_{\pm}( M_{\cB}(\infty) ).
\end{equation}
Moreover, if $\mathscr{M}_{s, < \infty}$ denotes the set of all finitely atomic quasi-complex representing measures for $s$, then
$$\mathscr{M}_{s, < \infty} = \{ \mu \}.$$	
\end{thm}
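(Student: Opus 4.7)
The approach is to work on the $r$-dimensional quotient algebra $\cA := \RR[x_1, \ldots, x_d] / \cI$ and exploit duality. By Theorem \ref{thm:June20t1}, diagonalisability of the $\Theta_j$ forces $\card \cV(\cI) = r$; write $\cV(\cI) = \{z^{(1)}, \ldots, z^{(r)}\}$. Each evaluation functional $e_{z^{(k)}}: \cA \to \CC$, $(p+\cI) \mapsto p(z^{(k)})$, is well defined because every $p \in \cI$ vanishes on $\cV(\cI)$, and these $r$ functionals are linearly independent by Theorem \ref{thm:June20t1}(i); since $\dim \cA^* = r$ they form a basis of $\cA^*$.

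For the existence of $\mu$, I would define the moment functional $\tau: \cA \to \CC$ by $\tau(p+\cI) = \sum_\gamma p_\gamma s_\gamma$. This is well defined because $p \in \cI$ yields $p(X) = {\bf 0}$, whose $0_d$-entry is precisely $\sum_\gamma p_\gamma s_\gamma$. Expanding $\tau = \sum_{k=1}^r \varrho_k e_{z^{(k)}}$ in the dual basis and testing at $p = x^\gamma$ produces
$$s_\gamma \;=\; \sum_{k=1}^r \varrho_k (z^{(k)})^\gamma \qquad \text{for every } \gamma \in \NN_0^d,$$
so $\mu := \sum_k \varrho_k \delta_{z^{(k)}}$ represents $s$. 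Quasi-complexity is automatic since $\cI$ has real coefficients, whence $\cV(\cI)$ is closed under complex conjugation. To conclude $\supp \mu = \cV(\cI)$ I would argue that every $\varrho_k \neq 0$: otherwise $\card \supp \mu < r$, contradicting Lemma \ref{lem:Feb22supp} applied to arbitrarily long truncations (note $\sup_n \rank M(n) = r$ by Lemma \ref{lem:Feb17rl3}). Formula \eqref{eq:WEIGHTS} then drops out by inverting the factorisation $M_{\cB}(\infty) = V_{\cB}^T R V_{\cB}$ of Lemma \ref{lem:July18prelim1}.

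When $\cV(\cI) \subseteq \RR^d$ the Vandermonde matrix $V_{\cB}$ is real and invertible, so \eqref{eq:WEIGHTS} forces $\varrho_k \in \RR$ and $\mu$ is genuinely signed. Splitting atoms by sign gives the Jordan decomposition, so $\card \supp \mu_\pm$ equals the number of positive/negative $\varrho_k$, i.e.\ $i_\pm(R)$ for $R = {\rm diag}(\varrho_1, \ldots, \varrho_r)$. Since $M_{\cB}(\infty) = V_{\cB}^T R V_{\cB}$ is a congruence of real symmetric matrices via the invertible $V_{\cB}$, Sylvester's law of inertia identifies this count with $i_\pm(M_{\cB}(\infty))$, yielding \eqref{eq:Hahn1}.

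Uniqueness is where I expect the main technical work. Let $\nu = \sum_{a=1}^t w_a \delta_{u^{(a)}}$ be any finitely atomic quasi-complex representing measure with all $w_a \neq 0$; Lemma \ref{lem:Feb22supp} gives $t \geq r$. For the reverse inequality, for every $p \in \cI$ and $\eta \in \NN_0^d$ the identity $p(X) = {\bf 0}$ combined with the $\nu$-representation of $s$ yields
$$\sum_{a=1}^t w_a \, p(u^{(a)}) \, (u^{(a)})^\eta = 0.$$
Choosing multi-indices $\eta^{(1)}, \ldots, \eta^{(t)}$ so that the square Vandermonde $((u^{(a)})^{\eta^{(b)}})_{a,b}$ is invertible (possible since the $u^{(a)}$ are distinct) forces $w_a p(u^{(a)}) = 0$, hence $p(u^{(a)}) = 0$ for every $p \in \cI$ and so $u^{(a)} \in \cV(\cI)$. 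This gives $t \leq r$, thus $t = r$ and $\supp \nu = \cV(\cI) = \supp \mu$, after which \eqref{eq:WEIGHTS} forces the weights of $\nu$ to coincide with those of $\mu$.
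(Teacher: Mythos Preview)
Your proof is correct and takes a genuinely different, more elementary route from the paper's.

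For existence, the paper works in the Pontryagin space $\cA$ and uses that the $\Theta_j$ are commuting self-adjoint operators there; since they are assumed diagonalisable and they commute, they are simultaneously diagonalised as $\Theta_j = S D_j S^{-1}$, and the moments are recovered via the functional-calculus identity $s_\gamma = (\Theta_1^{\gamma_1}\cdots\Theta_d^{\gamma_d}(1+\cI),\,1+\cI)_\cA$, which expands into $\sum_a \varrho_a (z^{(a)})^\gamma$ by inserting the rank-one projections $E_{aa}$. Your dual-space argument---expanding the moment functional $\tau$ in the basis of evaluation functionals $\{e_{z^{(k)}}\}$ of $\cA^*$---bypasses the Pontryagin-space machinery and the simultaneous diagonalisation step entirely, at the cost of an implicit complexification: since $\cA$ is a real vector space while the $e_{z^{(k)}}$ are complex-valued, you are really working in $(\cA\otimes_\RR\CC)^*$. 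This is harmless but worth saying explicitly.

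For uniqueness, the paper reduces to the zero sequence by considering $\mu-\nu$, decomposes $\nu$ into its four positive Jordan components, and then invokes an external uniqueness result for positive finitely atomic measures (Corollary~2.6 in \cite{CF_several}) to force $\nu_+^{(b)}=\nu_-^{(b)}$. Your direct argument---deriving $\sum_a w_a\,p(u^{(a)})(u^{(a)})^\eta=0$ from $p(X)=\mathbf{0}$ and then choosing enough $\eta$'s to make a square Vandermonde invertible---shows $\supp\nu\subseteq\cV(\cI)$ without leaving the paper, which is a nice improvement. The two approaches coincide on the signed-measure and inertia assertions, both going through Sylvester's law applied to $M_\cB(\infty)=V_\cB^T R\,V_\cB$.
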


\begin{proof}
If $s_{\gamma} = 0$ for all $\gamma \in \NN_0^d$, then $r= 0$ and $\mu  =0$ is a representing measure for $s$ with $\card \supp \mu = 0$. Suppose $s_{\gamma}$ is not identically $0$, i.e., $r > 0$. Let $\ell := \card \cV(\cI)$ and write $\cV(\cI) = \{ z^{(1)}, \ldots, z^{(\ell)} \}$. Since 
\begin{align*}
(\Theta_j \Theta_k(p + \cI), q+\cI)_{\cA} =& \; (\Theta_j (x^{ { \rm e}_k} p + \cI), q+\cI)_{\cA} = ( x^{ {\rm e}_j+ {\rm e}_k} p + \cI, q+\cI)_{\cA}  \\
=& \; (\Theta_k \Theta_j(p + \cI), q+\cI)_{\cA} \quad {\rm for} \quad j,k=1,\ldots,d
\end{align*}
and
\begin{align*}
(\Theta_j(p+ \cI), q + \cI)_{\cA}  =& \; ( (x^{ {\rm e}_j} p)(E), q(E) )_s \\
=& \; (p(E), x^{ {\rm e}_j } q(E) )_s  \\
=& \; ( p + \cI, \Theta_j(q+ \cI) )_{\cA}  \quad \quad {\rm for} \quad p+\cI, q+\cI \in \cA,
\end{align*}
$\{ \Theta_j \}_{j=1}^d$ are commuting self-adjoint operators on the $r$-dimensional Pontryagin space with $\kappa$-negative squares $\cA$ generated by $s$ (see Lemma \ref{lem:Oct28isomorph0}).


If $\cB:= \{ X^{\lambda^{(j)}} \}_{j=1}^r$ is a basis for $\cC_{M(\infty)}$, then $\cB_{\cA} := \{ x^{\lambda^{(j)} } + \cI \}_{j=1}^r$ is a basis for $\cA$. Since $\Theta_j$ is self-adjoint on $\cA$, a complex number $\lambda \in \sigma(\Theta_j)$ if and only if $\bar{\lambda} \in \sigma(\Theta_j)$, see, e.g., Proposition 4.2.3 in \cite{GLR}. It follows from Lemma \ref{lem:July18prelim1} that the principal submatrix $M_{\cB}(\infty)$ of $M(\infty)$ with rows and columns indexed by $\cB$ is invertible. With a slight abuse of notation, we shall also use $\Theta_j$ for the matrix representation of the linear transformation $\Theta_j : \cA \to \cA$ with respect to the basis $\cB_{\cA}$. It follows from \eqref{eq:June20d6} and \eqref{eq:June20d7} that $\ell  =r$ and
$$\sigma(\Theta_j) = \{ \pi_j(z^{(1)}), \ldots, \pi_j(z^{(\ell)}) \} \quad \quad {\rm for} \quad j=1,\ldots,d.$$
Since $\Theta_j$ is diagonalisable and $\Theta_j \Theta_k = \Theta_k \Theta_j$ for $j,k=1,\ldots,d$, we have the existence of an invertible matrix $S \in \CC^{r \times r}$ such that
$$\Theta_j = S \, {\rm diag}(\pi_j(z^{(1)}), \ldots, \pi_j(z^{(r)})) S^{-1} \quad \quad {\rm for} \quad j=1,\ldots, d.$$

%
%
%
%


%
%
which is equivalent to \eqref{eq:WEIGHTS}.

If we let $\gamma = (\gamma_1, \ldots, \gamma_d) \in \NN_0^d$, $v = (1, 0, \ldots, 0)^T \in \RR^r$ and $E_{aa} \in \RR^r$ be the matrix of zeros with a $1$ in the row and column indexed by $a$, then we may use Formula \eqref{eq:June21p4} to obtain
\begin{align*}
s_{\gamma} 
=& \; (\Theta_1^{ \gamma_1} \cdots \Theta_d^{\gamma_d} (1+ \cI), 1+ \cI)_{\cA} \\
=& \; \sum_{j=1}^r \langle M_{\cB}(\infty) S \, {\rm diag}(\pi_1(z^{(j)})^{\gamma_1} \cdots \pi_d(z^{(j)})^{\gamma_d})_{j=1}^r \, S^{-1} v, v \rangle \\
=& \; \pi_1(z^{(j)})^{\gamma_1} \cdots \pi_d(z^{(j)})^{\gamma_d} \sum_{a=1}^r \langle M_{\cB}(\infty) S \, E_{aa} S^{-1} v, v \rangle \\
=& \; \sum_{a=1}^r (z^{(j)})^{\gamma} \varrho_a \quad {\rm for} \quad \varrho_a = \langle M_{\cB}(\infty) S \, E_{aa} S^{-1} v, v \rangle.
\end{align*}
Thus, $\mu = \sum_{a=1}^r \varrho_a \delta_{z^{(a)}}$ is a quasi-complex representing measure for $s$ with at most $r$ atoms and $\supp \mu \subseteq \cV(\cI)$. If we consider the truncated multisequence $\tilde{s} = (s_{\gamma})_{0 \leq |\gamma| \leq 2 n_0}$, where
$$n_0 := \max_{a =1, \ldots, r} |\lambda^{(q)}|,$$
then $\mu$ is automatically a quasi-complex representing measure for $\tilde{s}$ and
$$\rank M_{\tilde{s}}(n_0) = r,$$
whence Lemma \ref{lem:Feb22supp} forces $\card \supp \mu = r$ and hence $\supp \mu = \cV(\cI)$.


If \eqref{eq:Feb18v1} is in force, then obviously $\mu$ is a signed measure. Moreover, the factorisation \eqref{eq:FF1} and Sylvester's law of inertia (see, e.g.,  Theorem 4.5.8 in \cite{HJ}) imply that
$$
i_{\pm}( M_{\cB}(\infty)) = i_{\pm}(R), 
$$
i.e., \eqref{eq:Hahn1} holds.

We will now show that $\mathscr{M}_{s, < \infty} = \{ \mu \}$. If $r = 0$, i.e., $s_{\gamma} = 0$ for all $\gamma \in \NN_0^d$, then let $\nu \in \mathscr{M}_{s, < \infty}$ and suppose
$$\nu = \nu^{(1)}_+ - \nu^{(1)}_- + (\nu^{(2)}_+ - \nu^{(2)}_-)i,$$
is the Jordan decomposition for the quasicomplex measure $\nu$ (see, e.g., Chapter 6 in \cite{Rudin}),
where
$\nu_{\pm}^{(b)}$ for $b=1,2$ are positive finitely atomic measures on $\CC^d$ such that
$$\supp \nu^{(b)}_+ \cap \supp \nu^{(b)}_- = \emptyset \quad {\rm for} \quad b=1,2.$$
Consequently, $s_{\gamma} = 0$ for all $\gamma \in \NN_0^d$ forces
$$\int_{\CC^d} z^{\gamma} d\nu^{(b)}_+(z) = \int_{\CC^d} z^{\gamma} d\nu_-^{(b)}(z) \quad {\rm for} \quad b = 1,2.$$
But since $\card \supp \nu_{\pm}^{(b)} < \infty$, the uniqueness assertion in \cite[Corollary 2.6]{CF_several}  forces $\nu_+^{(b)} = \nu_-^{(b)}$ for $b=1,2$, whence $\nu$ is the trivial measure.

If $r > 0$, then suppose $\nu$ is an $r_0$-atomic quasi-complex representing measure for $s$ with $r_0 \geq r = \card \supp \mu$ constructed above. Then consider $\tilde{\mu} = \mu - \nu$ which is a quasi-complex representing measure for $\tilde{s} = (\tilde{s}_{\gamma})_{\gamma \in \NN_0^d}$, where $\tilde{s}_{\gamma} = 0$ for all $\gamma \in \NN_0^d$. The argument above leads one to conclude that $\mu = \nu$. Thus, we have the desired uniqueness.
\end{proof}

\begin{rem}
\label{rem:Feb23nu}
Given a real-valued multisequence $s = (s_{\gamma})_{\gamma \in \NN_0^d}$ with 
$$r := \rank M(\infty)< \infty,$$ then the set $\mathscr{M}_s$ of quasi-complex measures is always a strict superset of the $r$-atomic measure $\mu$ constructed in the proof of Theorem \ref{thm:Oct28fas}. In other words, $\mu$ is not unique outside of $\mathscr{M}_{s, < \infty}$. Indeed, if we let $\mu$ be the $r$-atomic signed representing measure for $s$ constructed in the proof of Theorem \ref{thm:Oct28fas}, then $\mu + \nu$ is also a signed representing measure for $s$, where $\nu$ is the signed measure on $\RR^d$ given by
$$
d\nu(x) = d\nu(x_1, \ldots, x_d) = \sin( x_1^{1/4}) e^{-x_1^{1/4}} dx_1 \cdots dx_d,
$$
since
$$
\int \cdots \int_{\RR^d} x_1^{\gamma_1} \cdots x_d^{\gamma_d} d\nu(x_1, \ldots, x_d) = \int_{\RR} x_1^{\gamma_1}  \sin( x_1^{1/4}) e^{-x_1^{1/4}} dx_1 = 0
$$
for all $\gamma = (\gamma_1, \ldots, \gamma_d) \in \NN_0^d$ and hence
$$\int_{\RR^d} x^{\gamma} \{d\mu(x)+d\nu(x)\} = \int_{\RR^d} x^{\gamma} d\mu(x) \quad {\rm for} \quad \gamma = (\gamma_1, \ldots, \gamma_d) \in \NN_0^d.$$
The measure $\nu$ is an analogue of a measure that is originally due to Stieljes \cite[p. J105]{Stieltjes} (see also Remark 2.7.3 in \cite{BW} for additional discussion).
\end{rem}


%
%
%
%

\begin{rem}[Construction of $\mu$ in Theorem \ref{thm:Oct28fas}] 
\label{rem:CONSTRUCTIONmu}
Let 
$$\cB = (X^{\lambda})_{\lambda \in \Lambda^d_{2n} },$$ 
with $\card \Lambda = \rank M(2n) = r$ and $M_{\cB}(n)$ be the principal submatrix of $M(n)$ with rows and columns indexed by $\cB$ and $M_{\cB, \cB+{\rm e}_j}(n+1)$ be the submatrix of $M(n+1)$ with rows indexed by $\cB$ and columns indexed by $\cB+{\rm e}_j := \{ \lambda +{\rm e}_j: \lambda  \in \cB \}$. Simultaneously diagonialise $M_{\cB}(n)^{-1} M_{\cB, \cB+{\rm e}_j }(n+1)$ obtain an invertible matrix $S$ and diagonal matrices
$$D_j = {\rm diag}(z_j^{(1)}, \ldots, z_j^{(r)} ) \quad\quad {\rm for} \quad j=1,\ldots,d$$
such that
$$M_{\cB}(n)^{-1} M_{\cB, \cB+{\rm e}_j }(n+1) = S D_j S^{-1} \quad\quad {\rm for} \quad j=1,\ldots, d.$$
Finally, let $\mu = \sum_{a=1}^r \varrho_a \delta_{z^{(a)}}$, where
\begin{equation}
\label{eq:Z}
z^{(a)} = (z_1^{(a)}, \ldots, z_d^{(a)} ) \quad\quad {\rm for} \quad a=1,\ldots, r
\end{equation}
and
\begin{equation}
\label{eq:RHO}
(\varrho_1, \ldots, \varrho_r)^T = (V^{-1})^T \col(s_{\lambda})_{\lambda \in \Lambda}
\end{equation}
where $V$ is the Vandermonde matrix with rows indexed by $z^{(1)}, \ldots, z^{(r)}$ and columns indexed by $\Lambda$.
\end{rem}

\section{Rank preserving extensions of $d$-Hankel matrices}
\label{sec:RPE}
In this section we will prove two useful technical lemmas on consistency of extensions of $d$-Hankel matrices which will be useful key for demonstrating the main result in Section \ref{sec:CFana}.

\begin{defn}
\label{def:Feb23exten}
Let $s = (s_{\gamma})_{\stackrel{0 \leq |\gamma| \leq 2n}{\gamma \in \NN_0^d}}$ be a given real-valued multisequence and $M(n) := M(n)$ be as in Definition {\rm \ref{def:Feb13dH}}. We say that $M(n)$ admits a {\it rank preserving extension} $M(n+1)$ if there exist real numbers $(s_{\gamma})_{2n+1 \leq |\gamma| \leq 2n+2}$ such that $\rank M(n+1) = \rank M(n).$
\end{defn}

\begin{lem}
\label{lem:July19k1}
Let $(s_{\gamma})_{0 \leq |\gamma| \leq 2n'}$ be a given real-valued truncated multisequence. Suppose that $p(X) = {\bf 0} \in \cC_{M(n')}$ for some $p \in \RR^d_{n'}[{\bf x}]$. If $M(n')$ has an extension $M(n'+1)$ that satisfies
$$
(x^{ {\rm e}_j } p)(X) = {\bf 0} \in \cC_{M(n'+1)} \quad \quad {\rm for} \quad j=1,\ldots,d,
$$
then 
\begin{equation}
\label{eq:July19mnprime}
p(X) = {\bf 0} \in \cC_{M(n'+1)}.
\end{equation}
\end{lem}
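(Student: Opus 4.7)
Write $p(x) = \sum_{0 \leq |\lambda| \leq n'} p_\lambda x^\lambda$. The column $p(X) \in \cC_{M(n'+1)}$ is the vector in $\RR^{\ell(n'+1,d)}$ whose entry indexed by $X^\xi$ (for $0 \leq |\xi| \leq n'+1$) equals
$$
\sum_{0 \leq |\lambda| \leq n'} p_\lambda s_{\lambda + \xi}.
$$
The strategy is to verify this sum vanishes for every such $\xi$, by splitting into the two cases $|\xi| \leq n'$ and $|\xi| = n'+1$, and in each case matching the entry to a known zero entry of either $p(X) \in \cC_{M(n')}$ or of some $(x^{{\rm e}_j} p)(X) \in \cC_{M(n'+1)}$.

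First I would treat the case $0 \leq |\xi| \leq n'$. Since $|\lambda| \leq n'$ as well, we have $|\lambda + \xi| \leq 2n'$, so the required entries $s_{\lambda+\xi}$ already belong to $M(n')$. Consequently the displayed sum coincides with the $X^\xi$-entry of the column $p(X) \in \cC_{M(n')}$, which is zero by hypothesis. Thus every entry of $p(X) \in \cC_{M(n'+1)}$ indexed by a monomial of degree $\leq n'$ vanishes.

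Next I would handle the case $|\xi| = n'+1$. Since $|\xi| \geq 1$, choose any index $j \in \{1,\ldots,d\}$ with $\xi_j \geq 1$ and set $\eta := \xi - {\rm e}_j$, so that $|\eta| = n'$. Using the $d$-Hankel relation $s_{\lambda + \xi} = s_{\lambda + {\rm e}_j + \eta}$ (cf.\ Lemma \ref{lem:dH}), the $X^\xi$-entry of $p(X) \in \cC_{M(n'+1)}$ becomes
$$
\sum_{0 \leq |\lambda| \leq n'} p_\lambda \, s_{\lambda + {\rm e}_j + \eta},
$$
which is precisely the $X^\eta$-entry of $(x^{{\rm e}_j} p)(X) \in \cC_{M(n'+1)}$ (note $|\eta| = n' \leq n'+1$, so this row is available in $M(n'+1)$). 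By the hypothesis $(x^{{\rm e}_j} p)(X) = {\bf 0} \in \cC_{M(n'+1)}$, this entry vanishes, so the $X^\xi$-entry of $p(X)$ vanishes as required.

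The only potential snag is the bookkeeping: one must check that the above re-indexing is legitimate for \emph{every} $\xi$ with $|\xi| = n'+1$ (the freedom to choose $j$ with $\xi_j \geq 1$ handles this uniformly) and that the resulting index $\eta$ lies in the valid range for $\cC_{M(n'+1)}$. Once these trivial range checks are made, combining the two cases shows that every entry of $p(X) \in \cC_{M(n'+1)}$ is zero, yielding \eqref{eq:July19mnprime}.
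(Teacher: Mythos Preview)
Your proof is correct and follows essentially the same approach as the paper: split according to whether the row index has degree at most $n'$ (handled by the hypothesis $p(X)={\bf 0}\in\cC_{M(n')}$) or degree exactly $n'+1$ (handled by shifting one coordinate down and invoking $(x^{{\rm e}_j}p)(X)={\bf 0}\in\cC_{M(n'+1)}$). The only differences are notational.
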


\begin{proof}
If $p(x) = \sum_{0 \leq |\lambda| \leq n'} p_{\lambda} x^{\lambda}$, then $p(X) = {\bf 0} \in \cC_{M(n')}$ is equivalent to
\begin{equation}
\label{eq:July19y1}
\sum_{0 \leq |\lambda| \leq n'} p_{\lambda} \col(s_{\gamma + \lambda} )_{0 \leq |\gamma| \leq n'} = {\bf 0} \in \cC_{M(n')}.
\end{equation}
Similarly,
$$
(x^{ {\rm e}_j } p)(X) = {\bf 0} \in \cC_{M(n'+1)} \quad \quad {\rm for} \quad j=1,\ldots,d,
$$
is equivalent to
\begin{equation}
\label{eq:July19y2}
\sum_{0 \leq |\lambda| \leq n'} p_{\lambda} \col(s_{\gamma + \lambda + {\rm e}_j} )_{0 \leq |\gamma| \leq n'+1} = {\bf 0} \in \cC_{M(n'+1)} \quad\quad {\rm for} \quad j=1,\ldots,d.
\end{equation}
We will use \eqref{eq:July19y1} and \eqref{eq:July19y2} to verify \eqref{eq:July19mnprime}, or, equivalently,
\begin{equation}
\label{eq:July19y3}
\sum_{0 \leq |\lambda| \leq n'} p_{\lambda} \col(s_{\eta + \lambda} )_{0 \leq |\eta| \leq n'+1} = {\bf 0} \in \cC_{M(n'+1)} \quad\quad {\rm for} \quad j=1,\ldots,d.
\end{equation}
Indeed, using \eqref{eq:July19y1} we have that \eqref{eq:July19y3} holds when $0 \leq |\eta| \leq n'$. If $\eta = (\eta_1, \ldots ,\eta_d)$ with $|\eta| = n'+1$, then $\eta_{j_0} \geq 1$ for some $j_0 \in \{ 1, \ldots, d \}$. Consequently, $gamma := \eta - {\rm e}_{j_0} \in \NN_0^d$ and $|\gamma| = n'$. Thus, we may use \eqref{eq:July19y2} to verify \eqref{eq:July19y3} when $|\eta| = n'+1$.
\end{proof}

\begin{thm}
\label{thm:June14k1}
Let $s = (s_{\gamma})_{\stackrel{0 \leq |\gamma| \leq 2n'}{\gamma \in \NN_0^d}}$ be a given real-valued truncated multisequence. If $M(n')$ admits a rank preserving extension $M(n'+1)$, then there exist a sequence of rank preserving extensions
$$(M(n'+k))_{k=2}^{\infty}$$
that gives rise to the sequence $(s_{\gamma})_{\gamma \in \NN_0^d}$ with the property
$$\rank M(\infty) = \rank M(n).$$
\end{thm}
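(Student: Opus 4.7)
\medskip

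\noindent\textbf{Proof proposal.}
The plan is induction on $k \geq 1$: the base case $k=1$ is the hypothesis, and the inductive step is to show that any rank preserving extension $M(n'+k)$ of $M(n'+k-1)$ admits its own rank preserving extension $M(n'+k+1)$. Once the whole tower $(M(n'+k))_{k\geq 0}$ is constructed, the accompanying real multisequence $(s_\gamma)_{\gamma\in\NN_0^d}$ satisfies $\rank M(n'+k) = \rank M(n')$ for every $k\geq 0$, so Lemma \ref{lem:Feb17rl3} gives $\rank M(\infty) = \sup_n \rank M(n) = \rank M(n')$.

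For the inductive step, set $A = M(n'+k-1)$ and write $M(n'+k) = \bigl(\begin{smallmatrix} A & B \\ B^T & C\end{smallmatrix}\bigr)$, where $B$ is indexed on the right by monomials $X^{\lambda}$ with $|\lambda| = n'+k$. Since the extension is rank preserving, Lemma \ref{lem:Mar31e3} provides a matrix $W$ with $B = AW$ and $C = W^T A W$. The columns of $W$ encode relations $p_{\lambda}(X)= {\bf 0}$ in $\cC_{M(n'+k)}$ of the form
\begin{equation*}
 X^{\lambda} \; = \; \sum_{0 \leq |\mu| \leq n'+k-1} W_{\mu,\lambda}\, X^{\mu}, \qquad |\lambda| = n'+k.
\end{equation*}
I will then \emph{define} the yet-unknown moments $(s_\gamma)_{2(n'+k)+1 \leq |\gamma| \leq 2(n'+k+1)}$ by declaring these relations to propagate after one more shift, namely by requiring
\begin{equation*}
 s_{\lambda + e_j + \eta} \; := \; \sum_{0 \leq |\mu| \leq n'+k-1} W_{\mu,\lambda}\, s_{\mu + e_j + \eta}
\end{equation*}
for $|\lambda| = n'+k$, $j \in \{1,\ldots,d\}$ and $\eta$ with $|\eta| \leq n'+k$, processing degrees $2(n'+k)+1$ first and then $2(n'+k)+2$. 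Every right-hand side involves only moments that are already defined in $M(n'+k)$.

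The main obstacle is well-definedness: a single multi-index $\gamma$ with $|\gamma| = 2(n'+k)+1$ or $2(n'+k)+2$ can be produced by many distinct triples $(\lambda, e_j, \eta)$, and symmetry $s_{\alpha+\beta} = s_{\beta+\alpha}$ must also be respected. To deal with this I would argue as follows. Set $s_{\alpha+\beta} = (A x^{\alpha}, x^{\beta})$ using the bilinear form introduced in \eqref{eq:Feb10po3}; the relation $p_\lambda(X) = {\bf 0}$ in $\cC_{M(n'+k)}$ reads $\sum_\mu W_{\mu,\lambda}\, s_{\mu + \nu} = s_{\lambda + \nu}$ for $|\nu| \leq n'+k$. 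Using this identity together with the Hankel-symmetry criterion of Lemma \ref{lem:dH} (applied inside $M(n'+k)$) reduces any apparent conflict between two decompositions $\gamma = \lambda_1 + e_{j_1} + \eta_1 = \lambda_2 + e_{j_2} + \eta_2$ to an identity already holding among the entries of $M(n'+k)$. Lemma \ref{lem:Mar5tg1}, which controls how $\preceq_L$ interacts with the sum decomposition, is the combinatorial tool needed to enumerate the decompositions and reduce pairs to a common normal form. Once the new moments are well-defined, the resulting matrix $M(n'+k+1)$ is $d$-Hankel by construction, and by design $(x^{e_j} p_\lambda)(X) = {\bf 0}$ in $\cC_{M(n'+k+1)}$ for all $\lambda$ of degree $n'+k$ and all $j$.

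Finally, Lemma \ref{lem:July19k1} converts these shifted relations back into $p_\lambda(X) = {\bf 0}$ inside $\cC_{M(n'+k+1)}$, so every column indexed by $|\lambda| = n'+k$ still lies in the span of columns indexed by $|\mu| \leq n'+k-1$. A further application of Lemma \ref{lem:July19k1} (or a direct consequence of the definition of the new moments) shows that any column indexed by $|\lambda| = n'+k+1$ is a combination of columns with $|\mu| \leq n'+k$, hence also with $|\mu| \leq n'+k-1$. Therefore $\rank M(n'+k+1) \leq \rank M(n'+k-1)$; the reverse inequality is automatic since $M(n'+k-1)$ is a principal submatrix, completing the induction. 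The consistency verification in the middle paragraph is where I expect to spend almost all the work; the rest is essentially bookkeeping around Lemmas \ref{lem:Mar31e3} and \ref{lem:July19k1}.
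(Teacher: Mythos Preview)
Your plan is essentially the same as the paper's, and you have correctly isolated the consistency verification as the heart of the matter. The paper carries this out for $d=2$ (Steps~1--5) and the bookkeeping is indeed where all the effort goes; it uses exactly the ingredients you name, Lemmas~\ref{lem:dH} and~\ref{lem:Mar5tg1}, together with the Hermitian submatrix observation of Lemma~\ref{lem:dHHerm}.

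One organizational difference is worth flagging. The paper builds the extension \emph{matrix-first}: it sets $B$ to be the block of shifted columns and then forces $C = W^T M(n'+1) W$ via Lemma~\ref{lem:Mar31e3}, so rank preservation is automatic and the work is to check that the resulting block matrix is $d$-Hankel. You go \emph{moment-first}: you assign values $s_\gamma$ directly, so the $d$-Hankel structure is automatic and the work is to check well-definedness and then rank preservation via Lemma~\ref{lem:July19k1}. These are dual packagings of the same computation, and either is fine. Just be careful that your stated recipe with $|\eta| \le n'+k$ only reaches degree $2(n'+k)+1$; for the degree-$2(n'+k)+2$ moments you will either need to allow $|\eta| = n'+k+1$ (using the freshly defined degree-$2(n'+k)+1$ moments on the right-hand side) or, equivalently, iterate the relation twice. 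The paper handles this by defining the $C$-block in one shot via $C = W^T A W$ and then separately checking its Hankel structure (Steps~4--5), which you may find cleaner than a two-pass moment definition.
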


\begin{proof}
We will provide a proof in the particular case that $d=2$. The more general case of $d > 2$ can be proved in a similar manner, albeit with more bookkeeping. For convenience we shall use $X$ and $Y$ in place of $X_1$ and $X_2$, respectively.

Since $M(n')$ has a rank preserving extension $M(n'+1)$, we have the existence of polynomials $p^{(c,d)}(x,y) = \sum_{0 \leq j+k \leq n} p_{jk}^{(c,d)} x^j y^k \in \RR_n^2[ {\bf x} ]$ such that 
\begin{equation}
\label{eq:Mar3nn2}
p^{(ab)}(X,Y) = X^a Y^b \in \cC_{M(n'+1)} \quad \text{for all $(a,b) \in \NN_0^2$ with $a + b = n'+1$.}
\end{equation}

We will proceed in a number of steps to check that a rank preserving extension $M(n'+2)$ exists. The argument can then be iterated to produce a multisequence $(s_{\alpha \beta})_{(\alpha,\beta) \in \NN_0^2}$ such that $\rank M(\infty) = \rank M(n)$.

%
%
%
%

\bigskip

\noindent {\bf Step 1:} {\it Show that}
$$(x p^{(n'+1-m, m)} )(X,Y) = (y p^{(n'+1-m+1, m-1)})(X,Y) \quad {\rm for} \quad m=1,\ldots,n'+1.$$

\bigskip

Since
$$(xp^{(n'+1-m,m)}(X,Y) = \col ( \sum_{0 \leq j+k \leq n'} p_{jk}^{(n'+1-m, m)} s_{j+c+1, k+d})_{0 \leq c+d \leq n'+1}$$
and
$$(yp^{(n'+1-m+1,m-1)}(X,Y) = \col( \sum_{0 \leq j+k \leq n'} p_{jk}^{(n'+1-m+1, m-1)} s_{j+c, k+d+1})_{0 \leq c+d \leq n'+1}$$
it suffices to show that
\begin{equation}
\label{eq:Mar3jj3}
\sum_{0 \leq j+k \leq n'} p_{jk}^{(n'+1-m,m)} s_{j+c+1, k+d} = \sum_{0 \leq j+k \leq n'} p_{jk}^{(n'+1-m+1,m-1)} s_{j+c,k+d+1}
\end{equation}
for all $0 \leq c+d \leq n'+1$.

We will check \eqref{eq:Mar3jj3} by a case analysis. 

\bigskip

\noindent {\bf Case 1a:} \eqref{eq:Mar3jj3} {\it holds for all} $(c,d) \in \NN_0^2$ {\it such that} $0 \leq c+d \leq n'$.

\bigskip

If $0 \leq c+d \leq n'$, then
$$j+c+1+k+d \leq j+k+1+n' \leq 2n'+1.$$
Thus, $s_{j+c+1,k+d}$ and $s_{j+c,k+d+1}$ are entries in $M(n'+1)$. Consequently, \eqref{eq:Mar3nn2} with $a = n'+1-m$ and $b= m$ implies that 
$$s_{n'+2-m+c,m+d} = \sum_{0 \leq j+k \leq n'} p_{jk}^{(n'+1-m, m)} s_{j+c+1,k+d} \quad {\rm for} \quad 0 \leq c+d \leq n'.$$
Similarly, \eqref{eq:Mar3nn2} with $a = n'+2-m$ and $b=m-1$ implies that
$$s_{n'+2-m+c,m-d} = \sum_{0 \leq j+k \leq n'} p_{jk}^{ (n'+2-m, m-1)} s_{j+c, k+d+1} \quad {\rm for} \quad 0 \leq c+d \leq n'.$$
Thus, \eqref{eq:Mar3jj3} holds for $0 \leq c+d \leq n'$.

\bigskip

\noindent {\bf Case 1b:} \eqref{eq:Mar3jj3} {\it holds for all} $(c,d) \in \NN_0^2$ {\it such that} $c+d = n'+1$.

\bigskip

If $c+d = n'+1$, then let
$$\omega_1 := \sum_{0 \leq j+k \leq n'} p_{jk}^{(n'+1-m,m)} s_{j+c+1, k+d}$$
and
$$\omega_2 := \sum_{0 \leq j+k \leq n'} p_{jk}^{(n'+1-m+1,m-1)} s_{j+c,k+d+1}.$$
But then
$$\omega_1 = \row ( p_{jk}^{(n'+1-m, m)} )_{0 \leq j+k \leq n'} \; \col \left( s_{j+c+1, k+d } \right)_{0 \leq j+k \leq n'}.$$
Since $\col \left( s_{j+c+1, k+d} \right)_{0 \leq j+k \leq n'}$ is a subvector of $X^c Y^d \in \cC_{M(n'+1)}$. Consequently, $\rank M(n'+1) = \rank M(n')$ forces 
$$\col \left( s_{j+c+1, k+d} \right)_{0 \leq j+k \leq n'} = \Phi \, \col ( p_{jk}^{(c,d)} )_{0 \leq j+k \leq n'},$$
where $\Phi \in \RR^{ \ell(n',2) \times \ell(n',2) }$ is the submatrix of $M(n'+1)$ with rows indexed by $(X^{j+1} Y^k)_{0 \leq j+k \leq n'}$ and columns indexed by $(X^j Y^k)_{0 \leq j +k \leq n'}$. Thus,
\begin{equation}
\label{eq:Mar3ok1}
\omega_1 = \row ( p_{jk}^{(n'+1-m, m)} )_{0 \leq j+k \leq n'} \, \Phi \, \col ( p_{jk}^{(c,d)} )_{0 \leq j+k \leq n'}.
\end{equation}

On the other hand,
$$\omega_2 = \row (p_{jk}^{(n'+1-m+1,m-1)})_{0 \leq j+k \leq n'} \; \col (s_{j+c, k+d+1} )_{0 \leq j+k \leq n'}.$$
But $\col (s_{j+c, k+d+1})_{0 \leq j+k \leq n'}$ is a subvector of $X^c X^d \in \cC_{M(n'+1)}$. Consequently, $\rank M(n'+1) = \rank M(n')$ implies that 
$$\col \left( s_{j+c, k+d+1} \right)_{0 \leq j+k \leq n'} = \wt{\Phi} \, \col ( p_{jk}^{(c,d)} )_{0 \leq j+k \leq n'},$$
where $\wt{\Phi} \in \RR^{ \ell(n',2) \times \ell(n',2) }$ is the submatrix of $M(n'+1)$ with rows indexed by $(X^{j} Y^{k+1})_{0 \leq j+k \leq n'}$ and columns indexed by $(X^j Y^k)_{0 \leq j +k \leq n'}$. Thus,
\begin{equation}
\label{eq:Mar3ok2}
\omega_2 = \row ( p_{jk}^{(n'+1-m+1, m-1)} )_{0 \leq j+k \leq n'} \, \wt{\Phi} \, \col ( p_{jk}^{(c,d)} )_{0 \leq j+k \leq n'}.
\end{equation}

We claim that $\Phi$ and $\wt{\Phi}$ are Hermitian. Indeed, the entry in the row indexed by $(j+1, k)$ and the column indexed by $(\ell, m)$ is given by
$$s_{j+1+\ell, k+m},$$
 while the entry in the row indexed by the column $(\ell +1, m)$ and the column indexed by $(j,k)$ is given by 
$$s_{\ell + j + 1, m +k}.$$ 
Thus, $\Phi$ is Hermitian. The fact that $\wt{\Phi}$ is Hermitian can be justified in much the same way.

Since $\Phi$ is Hermitian, \eqref{eq:Mar3ok1} and \eqref{eq:Mar3nn2} imply that
\begin{align*}
\omega_1 =& \;  \row(p_{jk}^{(c,d)})_{0 \leq j+k \leq n'} \, \Phi \, \col (p^{(n'+1-m, m)}_{jk})_{0 \leq j+k \leq n'}  \\
=& \; \row(p_{jk}^{(c,d)})_{0 \leq j+k \leq n'} \, \col (s_{n'+1-m+j+1, m+k} )_{0 \leq j+k \leq n'}
\end{align*}
Similarly, since $\wt{\Phi}$ is Hermitian, \eqref{eq:Mar3ok2} and \eqref{eq:Mar3nn2} imply that
\begin{align*}
\omega_2 =& \;  \row(p_{jk}^{(c,d)})_{0 \leq j+k \leq n'} \, \wt{\Phi} \, \col (p^{(n'+1-m+1, m-1)}_{jk})_{0 \leq j+k \leq n'}  \\
=& \; \row(p_{jk}^{(c,d)})_{0 \leq j+k \leq n'} \, \col (s_{n'+1-m+j+1, m-1+k+1} )_{0 \leq j+k \leq n'}
\end{align*}
Thus, $\omega_1 = \omega_2$ and we have shown \eqref{eq:Mar3jj3} when $c+d = n'+1$.

\bigskip

\noindent {\bf Step 2:} {\it Define matrices $B \in \RR^{ \ell(n',1) \times \ell(n'+2,2)}$ and $C \in \RR^{\ell(n'+2,2) \times \ell(n'+2,2)}$ such that the block matrix
$$M := \begin{pmatrix} M(n+1) & B \\ B^T & C \end{pmatrix}$$
satisfies $\rank M = \rank M(n'+1)$.}

\bigskip

To this end, let
$$B = \row(v^{(c,d)} )_{c+d = n'+2},$$
where $v^{(c,d)} \in \RR^{\ell(n'+1,2)}$ given by
\begin{align}
v^{(c,d)} :=& \; \col( v_{\alpha \beta}^{(c,d)} )_{0 \leq \alpha+\beta \leq n'+1} \nonumber \\
:=& \; \begin{cases}
\sum_{ 0 \leq j+k \leq n} p_{jk}^{(c-1,d)} s_{j+\alpha+1, \beta + k} & {\rm if} \quad c > 0, \\[6pt]
\sum_{0 \leq j+k \leq n} p_{jk}^{(0, d-1)} s_{j+\alpha, k+\beta+1} & {\rm if} \quad c = 0. 
\end{cases}
\label{eq:Mar4k1}
\end{align}
Thus, 
$$B = \row\{ (x p^{(n'+1,0)})(X,Y), (x p^{(n',1)})(X,Y), \ldots, (x p^{(1,n')})(X,Y), (y p^{(0,n'+1)})(X,Y) \}$$
and consequently there exists a matrix $W \in \RR^{ \ell(n'+1, 2) \times \ell(n'+2, 2)}$ such that
$$M(n'+1) W = B.$$
If we let 
\begin{equation}
\label{eq:Mar3C}
C := W^T M(n'+1) W = B^T W,
\end{equation}
then $\rank M = \rank M(n'+1)$. This completes the proof of Step 2.

We would like to check that $M$ is $2$-Hankel. In view of Lemma \ref{lem:dH}, it suffices to show that
\begin{equation}
\label{eq:Mar32H}
(x^a y^b, x^c y^d )_M = (x^{\alpha} y^{\beta}, x^{\delta} x^{\varepsilon} )_M
\end{equation}
whenever $a+c = \alpha + \delta$ and $b + d = \beta + \varepsilon$ for
$$0 \leq a+b, c+d, \alpha+\beta, \delta+\varepsilon \leq n+2.$$
Since $M(n'+1)$ is $2$-Hankel, Lemma \ref{lem:dH} implies that \eqref{eq:Mar32H} holds whenever
$$0 \leq a+b, c+d, \alpha+\beta, \delta+\varepsilon \leq n'+1.$$

\bigskip

\noindent {\bf Step 3:} {\it Verify \eqref{eq:Mar32H} when $0 \leq a+b, \alpha + \beta \leq n'+1$ and $c+d = \delta+\varepsilon = n'+2.$}

\bigskip

We begin by claiming that if $b > 0$ and $c > 0$, then
\begin{equation}
\label{eq:Mar6c1}
\text{$(x^a y^b, x^c y^d)_M = (x^{a+1} y^{b-1}, x^{c-1} y^{d+1} )_M$ for $0 \leq a +b \leq n'+1$ and $c+d = n'+2.$}
\end{equation}
Indeed, in view of the construction of $B$ given in Step 2, we may use the fact that $M(n+1)$ is $2$-Hankel (and hence Lemma \ref{lem:dH} is applicable) to obtain
\begin{align*}
(x^a y^b, x^c y^d )_M =& \; \left(x^a y^b, \sum_{0 \leq j+k \leq n'} p_{jk}^{(c-1, d)} x^{j+1} y^k \right)_M \\
=& \; \left( x^a y^b, x \sum_{0 \leq j+k \leq n'} p_{jk}^{(c-1, d)} x^{j+1} y^k \right)_M \\
=& \; \left( x^{a+1} y^{b-1}, \sum_{0 \leq j+k \leq n'} p_{jk}^{(c-1, d)} x^{j} y^{k+1} \right)_M \\
=& \;  \left( x^{a+1} y^{b-1}, (y p^{(c-1,d)})(X,Y) \right)_M \\
=& \;  \left( x^{a+1} y^{b-1}, x^{c-1} y^{d+1} \right)_M.
\end{align*}
Thus, \eqref{eq:Mar6c1} is verified.

We will now verify \eqref{eq:Mar32H}. Without loss of generality, suppose 
$$X^c Y^d \preceq_L X^{\delta} Y^{\varepsilon}.$$
Lemma \ref{lem:Mar5tg1} asserts that $X^{\alpha} Y^{\beta} \preceq_L X^a Y^b.$
If $\delta = c$ and $\varepsilon=d$, then $ a= \alpha$ and $\beta = b$, whence there is nothing to prove. We may assume, without loss of generality, that
$$X^c Y^d \preceq_L X^{\delta}Y^{\varepsilon}$$
and $(c,d) \neq (\delta,\varepsilon)$. Therefore, $d < \varepsilon$ and $ c > \delta
$. Assertion \eqref{eq:Mar5k3} implies that
$a + b = \alpha + \beta$, since $c + d = \delta+ \varepsilon = n'+2$. If $a = \alpha$ and $b = \beta$, then $c = \delta$ and $ d = \varepsilon$. However, this cannot happen because we assumed that $(c,d) \neq (\delta, \varepsilon)$. Thus,
$X^{\alpha} Y^{\beta} \preceq_L X^a Y^b$ forces 
$$a < \alpha \quad {\rm and} \quad \beta > b.$$
Since $a+c = \alpha + \delta$ and $b+d = \beta + \varepsilon$, we have 
$$\alpha - a = c - \delta \quad {\rm and} \quad b - \beta = \varepsilon - d.$$
Furthermore, since $c+d = \delta + \varepsilon =n'+2$, we have $c - \delta = \varepsilon-d$ and hence
$$\omega := \alpha - a = c- \delta = \varepsilon - d = b - \beta.$$
Therefore, we may invoke formula \eqref{eq:Mar6c1} $\omega$-times to obtain
\begin{align*}
(x^a y^b, x^c y^d )_M =& \; (x^{a+1} y^{b-1}, x^{c-1} x^{d+1} )_M \\
\vdots &  \\
=& \; (x^{\alpha-1} y^{\beta+1}, x^{\delta+1} y^{\varepsilon-1} )_M \\
=& \; (x^{\alpha} y^{\beta}, x^{\delta} y^{\varepsilon})_M.
\end{align*}
Thus, have verified \eqref{eq:Mar32H}. This completes the proof of Step 3.

In view of Step 3, we may index the rows of $B^T$ by the monomials $(X^c Y^d)_{c+d=n'+2}$ and the columns of $B^T$ by the monomials $(X^a Y^b)_{0 \leq a+b \leq n'+1}$ so that the entry in the row indexed by $X^c Y^d$ and the column indexed by $X^a X^b$ is given by
$$s_{c+a,d+b}.$$
We will let $\wt{X}^a \wt{Y}^b := \col(s_{c+a,d+b})_{c+d=n'+2} \in \RR^{n'+1}$. In Step 2, we constructed a matrix $W$ such that $M(n'+1)W = B$ and $C = W^T M(n'+1) W = B^T W.$ Thus, we have 
$$ C = \row\{ (x p^{(n'+1,0)})(\wt{X},\wt{Y}), (x p^{(n',1)})(\wt{X},\wt{Y}), \ldots, (x p^{(1,n')})(\wt{X},\wt{Y}), (y p^{(0,n'+1)})(\wt{X},\wt{Y}) \}.$$

\bigskip

\noindent {\bf Step 4:} {\it Show that $(xp^{(n'+1-m,m)})(\wt{X}, \wt{Y}) = (yp^{(n'+1-m+1,m-1)})(\wt{X},\wt{Y})$.}

\bigskip

Similar to Step 1, the equality 
$$(x p^{(n'+1-m,m)})(\wt{X}, \wt{Y}) = (y p^{(n'+1-m+1,m-1)} )(\wt{X}, \wt{Y}) \quad {\rm for} \quad m=1,\ldots n'+1$$ 
is in force if and only if
\begin{align}
& \; \; \col ( \sum_{0 \leq j+k \leq n'} p_{jk}^{(n'+1-m,m)} s_{j+c+1, k+d} )_{c+d = n'+2} \nonumber \\
=& \; \col ( \sum_{0 \leq j+k \leq n'} p_{jk}^{(n'+1-m+1, m-1)} s_{j+c, k+d+1} )_{c+d=n'+2} \label{eq:Mar6kk3}
\end{align}
for $m=1,\ldots, n'+1$.

Fix $(c,d) \in \NN_0^2$ such that $c+d = n'+2$. Then
\begin{align*}
\sigma_1 := & \; \sum_{0 \leq j+k \leq n'} p_{jk}^{(n'+1-m,m)} s_{j+c+1, k+d} \\
=& \; \row(p_{jk}^{(n'+1-m,m)} )_{0 \leq j+k \leq n'} \; \col(s_{j+c+1, k+d})_{0 \leq j+k \leq n'}.
\end{align*}
Since $\col(s_{j+c+1, k+d})_{0 \leq j+k \leq n'}$ is subvector of the column of $B$ indexed by the monomial $X^c X^d$, we have
$$\col( s_{j+c+1, k+d} )_{0 \leq j+k \leq n'} = \Phi \col(p_{jk}^{(c,d)})_{0 \leq j+k \leq n'},$$
where $\Psi \in \RR^{\ell(n',2) \times \ell(n',2)}$ is the submatrix of $M(n'+1)$ with rows indexed by $(X^{j+1} Y^k)_{0 \leq j+k \leq n'}$ and columns indexed by $(X^j Y^k)_{0 \leq j+k \leq n'}$. Similarly, 
\begin{align*}
\sigma_2 := & \; \sum_{0 \leq j+k \leq n'} p_{jk}^{(n'+1-m+1,m-1)} s_{j+c, k+d+1} \\
=& \; \row(p_{jk}^{(n'+1-m+1,m-1)} )_{0 \leq j+k \leq n'} \; \col(s_{j+c, k+d+1})_{0 \leq j+k \leq n'}.
\end{align*}
Since $\col(s_{j+c, k+d+1})_{0 \leq j+k \leq n'}$ is subvector of the column of $B$ indexed by the monomial $X^c X^d$, we have
$$\col( s_{j+c+1, k+d} )_{0 \leq j+k \leq n'} = \Phi \col(p_{jk}^{(c,d)})_{0 \leq j+k \leq n'},$$
where $\wt{\Psi} \in \RR^{\ell(n',2) \times \ell(n',2)}$ is the submatrix of $M(n'+1)$ with rows indexed by $(X^{j} Y^{k+1})_{0 \leq j+k \leq n'}$ and columns indexed by $(X^j Y^k)_{0 \leq j+k \leq n'}$.

One can verify that $\sigma_1 = \sigma_2$ in the much the same way as we verified that $\omega_1 = \omega_2$ in Step 1. This completes the proof of Step 4.

\bigskip

\noindent {\bf Step 5:} {\it Verify \eqref{eq:Mar32H} when $a+b = c+d = \alpha + \beta = \delta + \varepsilon = n'+2$}.

\bigskip

The only case in \eqref{eq:Mar32H} left to verify is when $a+b = c+d = \alpha + \beta = \delta + \varepsilon = n'+2$. However, with the presence of Step 4, the justification of Step 3 can easily be adjusted to fit the present context. 
\end{proof}

Let 
\begin{equation}
\label{eq:Jun1e1}
\Lambda_m^d = \{ \gamma \in \NN_0^d: |\gamma| = m \}
\end{equation}
and
\begin{equation}
\label{eq:Jun1e2}
\Lambda_{m,n'}^d = \{ \gamma \in \Lambda_m^d: \gamma = n'{\rm e}_{j_0} + \sum_{j=1}^d \gamma_j {\rm e}_j \quad \text{for some $j_0 \in \{ 1, \ldots, d \}$} \}.
\end{equation}
Clearly, $\Lambda_{m,n'}^d \subseteq \Lambda_m^d.$

\begin{lem}
\label{lem:Jun1p1}
If $n > 0$ and $\Lambda_m^d$ and $\Lambda_{m,n}^d$ are as in {\rm \eqref{eq:Jun1e1}} and {\rm\eqref{eq:Jun1e2}}, respectively, then
\begin{mini*}
		{}{m}
		{}{}
		\addConstraint{\Lambda^d_{m,n'}}{=\Lambda^d_m}		
	\end{mini*}
	is given by $(n-1)(d-1) + 1.$
\end{lem}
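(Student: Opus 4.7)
The plan is a two-sided counting argument: a pigeonhole estimate for the upper bound, matched by an explicit extremal multi-index for the lower bound. The first step is to unpack membership in $\Lambda_{m,n'}^d$ directly from \eqref{eq:Jun1e2}: a multi-index $\gamma$ lies in $\Lambda_{m,n'}^d$ precisely when there exists $j_0$ for which the prescribed decomposition $\gamma = n{\rm e}_{j_0} + \sum_{j=1}^d \gamma_j {\rm e}_j$ is admissible. Failure of membership therefore restricts the combinatorial pattern of $\gamma$: every component must be at most $n-1$, and additionally the support of $\gamma$ must miss at least one coordinate direction.

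For the upper bound, I would suppose $|\gamma| = m \geq (n-1)(d-1) + 1$. If $\gamma$ were not in $\Lambda_{m,n'}^d$, the pattern above would force $\gamma$ to be supported on at most $d - 1$ coordinates, each bounded by $n - 1$, giving $|\gamma| \leq (n-1)(d-1)$, contradicting the hypothesis. Hence $\gamma \in \Lambda_{m,n'}^d$ and the upper bound is established. For the lower bound, at $m = (n-1)(d-1)$ I would exhibit the explicit multi-index $\gamma = (n-1, n-1, \ldots, n-1, 0)$ with $n-1$ in each of the first $d-1$ coordinates and $0$ in the last; this $\gamma$ lies in $\Lambda_m^d$, has every coordinate bounded by $n-1$, and has one zero coordinate, so inspection of \eqref{eq:Jun1e2} shows $\gamma \notin \Lambda_{m,n'}^d$, ruling out any strictly smaller $m$.

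The main subtlety, and the only nontrivial point, is parsing \eqref{eq:Jun1e2} carefully enough to sort out which multi-indices admit an admissible decomposition, so that the pigeonhole estimate tightly yields $(n-1)(d-1) + 1$ rather than the coarser $d(n-1) + 1$ one might initially expect from the naive bound on all coordinates alone. Once the correct characterization of the set $\Lambda_m^d \setminus \Lambda_{m,n'}^d$ is in hand, both the pigeonhole upper bound and the explicit extremal example for the lower bound follow from elementary counting.
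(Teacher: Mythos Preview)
There is a genuine gap in your reading of \eqref{eq:Jun1e2}. In that display the symbols $\gamma_j$ appearing on the right-hand side are fresh nonnegative integers, not the components of the multi-index $\gamma$ on the left; the condition $\gamma = n'{\rm e}_{j_0} + \sum_{j} \gamma_j {\rm e}_j$ simply says that $\gamma - n'{\rm e}_{j_0} \in \NN_0^d$ for some $j_0$, i.e.\ that at least one component of $\gamma$ is $\geq n'$. Consequently $\gamma \in \Lambda_m^d \setminus \Lambda_{m,n'}^d$ means exactly that \emph{every} component of $\gamma$ is $\leq n'-1$; there is no additional requirement that the support of $\gamma$ miss a coordinate direction. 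Your extra support condition is what drives your pigeonhole bound down to $(n-1)(d-1)$, and without it the bound is $(n-1)d$: a multi-index with all components $\leq n-1$ can perfectly well be supported on all $d$ coordinates.

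With the correct characterization your two-sided argument does work, but it yields $(n-1)d + 1$ rather than $(n-1)(d-1)+1$: the extremal multi-index is $(n-1,\ldots,n-1)$ at level $m=(n-1)d$, which is exactly the counterexample the paper uses. (The lemma as stated carries a typo; the paper's own proof establishes $m_0 = n + (n-1)(d-1) = (n-1)d+1$, and this is also the value required when the lemma is invoked in the proof of Lemma~\ref{lem:Mar28l1}, where with $n = n'+1$ one needs the smallest $a$ with $n'+a = (n-1)d+1$, giving $a = n'(d-1)+1$.) Your witness $(n-1,\ldots,n-1,0)$ at level $(n-1)(d-1)$ does show $\Lambda^d_{m,n'} \subsetneq \Lambda^d_m$ there, but it is not the sharp obstruction.
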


\begin{proof}
If we let $m_0$ be the minimal value of the optimisation problem in the statement of Lemma \ref{lem:Jun1p1}, then $m_0 \leq n+(n-1)(d-1)$, since $\Lambda^d_{n+(n-1)(d-1),n} = \Lambda_{n+(n-1)(d-1)}^d$. If not, then 
$$| \gamma | \leq (n-1)d < n+(n-1)(d-1) = (n-1)d+1.$$
To see that $m_0 = n+(n-1)(d-1)$, it suffices to show that if 
$$m_0 = n+(n-1)(d-1) -1 = (n-1)d,$$ 
then $\Lambda_{m_0,n}^d$ is a strict subset of $\Lambda_{m_0}^d.$ Indeed, as 
$$(n-1, \ldots, n-1) \in \NN_0^d$$
belongs to $\Lambda_{m_0}^d$ but not to $\Lambda_{m_0,n}^d$, we have that
$\Lambda_{m_0,n}^d$ is a strict subset of $\Lambda_{m_0}^d.$
\end{proof}

\begin{lem}
\label{lem:Mar28l1}
Let $s = (s_{\gamma} )_{\stackrel{0 \leq |\gamma| \leq 2n'}{\gamma \in \NN_0^d} }$ be a given multisequence of real numbers with $n' > 0$. If $M(n')$ has an extension $M(n'+1)$ such that
\begin{equation}
\label{eq:Mar28e1}
X^{(n'+1){\rm e}_j} = p^{(n'+1){\rm e}_j}(X) \in \cC_{M(n'+1)} \quad {\rm for} \quad p^{(n'+1){\rm e}_j}(x) \in \RR_n^d[ {\bf x} ]
\end{equation}
and $j=1,\ldots, d$, then $M(n'+1)$ gives rise to a sequence of extensions $(M(n'+a))_{a=2}^{\infty}$ such that for any $j_1 \in \{ 1, \ldots, d \}$,
\begin{equation}
\label{eq:Mar28kk3}
X^{(n'+1){\rm e}_{j_1} + { \rm e}_{j_2}+ \ldots + {\rm e}_{j_a}} = ( x^{ {\rm e}_{j_1} + {\rm e}_{j_2} + \ldots + {\rm e}_{j_a}} p^{(n'+1){\rm e}_{j_1} })(X) \in \cC_{M(n'+a)}
\end{equation}
for all $j_k = 1, \ldots, d$ and $k=2,\ldots, a$. In particular,
\begin{equation}
\label{eq:May15f1}
\rank M(n'd) = \rank M(n'd+1) = \rank M(\infty).
\end{equation}
\end{lem}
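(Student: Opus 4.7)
The plan is to construct the extensions $(M(n'+a))_{a\ge 2}$ inductively, forcing the column identities \eqref{eq:Mar28kk3} to hold at each level, and then to deduce \eqref{eq:May15f1} by combining these identities with the pigeonhole principle in Lemma~\ref{lem:Jun1p1}. The base case $a=1$ is the hypothesis.

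For the inductive step, assume that $M(n'+a)$ has been built and that \eqref{eq:Mar28kk3} holds for every multi-index of length $a$. To extend to $M(n'+a+1)$, I would specify the new moments $s_\gamma$ (with $2n'+2a+1\le|\gamma|\le 2n'+2a+2$) by demanding that, for every choice of $j_1,\ldots,j_{a+1}\in\{1,\ldots,d\}$, the length-$(a+1)$ identity
\begin{equation*}
X^{(n'+1){\rm e}_{j_1}+{\rm e}_{j_2}+\cdots+{\rm e}_{j_{a+1}}}=\bigl(x^{{\rm e}_{j_2}+\cdots+{\rm e}_{j_{a+1}}}\, p^{(n'+1){\rm e}_{j_1}}\bigr)(X)
\end{equation*}
holds in $\cC_{M(n'+a+1)}$. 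Writing $p^{(n'+1){\rm e}_{j_1}}(x)=\sum_{|\lambda|\le n'}c_\lambda^{j_1}x^\lambda$, this translates into
\begin{equation*}
s_{\xi+(n'+1){\rm e}_{j_1}+{\rm e}_{j_2}+\cdots+{\rm e}_{j_{a+1}}}=\sum_{|\lambda|\le n'}c_\lambda^{j_1}\,s_{\xi+\lambda+{\rm e}_{j_2}+\cdots+{\rm e}_{j_{a+1}}}
\end{equation*}
for all $|\xi|\le n'+a+1$, a linear system which prescribes every new entry.

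The main obstacle is the consistency of this system: whenever a multi-index $\gamma$ admits two distinct decompositions $\gamma=(n'+1){\rm e}_{j_1}+\delta_1=(n'+1){\rm e}_{j_1'}+\delta_2$ with $j_1\ne j_1'$, the forced values of $s_\gamma$ coming from these reductions must coincide. I would establish this by expressing both candidate values as bilinear forms involving entries of the already-built $M(n'+a)$, swapping row and column roles through Lemma~\ref{lem:dHHerm}, and collapsing one reduction into the other by means of the inductively available length-$a$ identities, in close analogy with Step~1 of the proof of Theorem~\ref{thm:June14k1}. A conceptual explanation for the consistency is that the generators $x_j^{n'+1}-p^{(n'+1){\rm e}_j}(x)$ have pairwise coprime leading monomials, so Buchberger's first criterion guarantees that reductions modulo the generated ideal are unambiguous. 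Once consistency is secured, Lemma~\ref{lem:July19k1} propagates each length-$a$ column identity of $M(n'+a)$ to $M(n'+a+1)$, closing the induction.

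For \eqref{eq:May15f1}, I would apply Lemma~\ref{lem:Jun1p1} with $n=n'+1$: every $\gamma$ with $|\gamma|\ge n'd+1$ satisfies $\gamma_{j_0}\ge n'+1$ for some $j_0$, so writing $\gamma=(n'+1){\rm e}_{j_0}+\gamma'$ and invoking \eqref{eq:Mar28kk3} at the appropriate level, the column $X^\gamma$ is a linear combination of columns of degree at most $|\gamma|-1$. Consequently $\rank M(n'+a)=\rank M(n'+a-1)$ for every $a\ge n'(d-1)+1$, which yields $\rank M(n'd)=\rank M(n'd+k)$ for every $k\ge 0$. Finally, Lemma~\ref{lem:Feb17rl3} identifies this common value with $\rank M(\infty)$.
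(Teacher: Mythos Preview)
Your strategy matches the paper's: build the extensions inductively by forcing the desired column identities, check that the resulting linear system for the new moments is consistent by exploiting the Hermitian structure of the shifted submatrices (Lemma~\ref{lem:dHHerm}) together with the previously established identities, and then deduce rank stabilisation from Lemma~\ref{lem:Jun1p1}. The only organisational difference is that the paper first defines each $M(n'+a+1)$ using the ``pure power'' relations $X^{(n'+a+1){\rm e}_j}=(x^{a{\rm e}_j}p^{(n'+1){\rm e}_j})(X)$ alone (its Step~1), and then verifies a posteriori that the mixed relations \eqref{eq:Mar28kk3} hold (Steps~2--8, written out for $d=2$); you instead impose all the relations simultaneously and front-load the consistency check. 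The substance of the two verifications is the same bilinear-form computation. Your Buchberger-criterion remark (the leading monomials $x_j^{n'+1}$ are pairwise coprime, so the generators form a Gr\"obner basis) is a helpful conceptual explanation that the paper does not make explicit, and your derivation of \eqref{eq:May15f1} directly via column reduction and Lemma~\ref{lem:Feb17rl3}, rather than by invoking Theorem~\ref{thm:June14k1}, is a legitimate minor variant.
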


\begin{proof}
The sequence of extensions $(M(n'+a))_{a=2}^{\infty}$ will be generated using \eqref{eq:Mar28e1}. Let us suppose that first that \eqref{eq:Mar28kk3} is in force. Let
$$L_a = \{ \gamma \in \NN_0^d: X^{\gamma} = ( x^{ {\rm e}_{j_1} + {\rm e}_{j_2} + \ldots + {\rm e}_{j_a}} p^{(n'+1){\rm e}_{j_1} })(X) \in \cC_{M(n'+a)} \}.$$
Lemma \ref{lem:Jun1p1} with $n = n'+1$ asserts that the smallest $a$ such that $L_a \cap \Lambda_{n'+a}^d = \Lambda_{n'+a}^d$ is given by $n'(d-1) + 1$. Consequently, we can invoke Theorem \ref{thm:June14k1} to obtain \eqref{eq:May15f1}.

The proof of \eqref{eq:Mar28kk3} is broken into steps.

\bigskip

\noindent {\bf Step 1:} {\it Define extensions $(M(n'+a))_{a=2}^{\infty}$ satisfying \eqref{eq:Mar28e1} and \eqref{eq:Mar28kk3} such that $M(n'+a+1)$ obeys
$$X^{(n'+a+1){\rm e}_j} = (x^a p^{(n'+1){\rm e}_j})(X) \in \cC_{M(n'+a)} \quad {\rm for} \quad j=1,\ldots, d.$$}

\bigskip

If \eqref{eq:Mar28e1} is in force, then we shall first show that $M(n'+1)$ admits an extension $M(n'+2)$ that is completely determined by the column relations
$$X^{(n'+2){\rm e}_j } = (x^{ {\rm e}_j} p^{ (n'+1){\rm e}_j } )(X) \quad {\rm for} \quad j=1,\ldots, d.$$
Indeed, if we let
\begin{equation}
\label{eq:Mar28l1}
{\rm col}(s_{ (n'+2){\rm e}_j + \lambda})_{|\lambda| = n'+1} := {\rm col}(\sum_{0 \leq |\alpha | \leq n} p_{\alpha}^{(n'+1) {\rm e}_j } s_{\alpha+ \lambda + {\rm e}_j} )_{|\lambda| = n'+1}
\end{equation}
and
\begin{equation}
\label{eq:Mar28l2}
{\rm col}(s_{ (n'+2){\rm e}_j + \gamma})_{|\gamma| = n'+2} := {\rm col}(\sum_{0 \leq |\alpha| \leq n'} p_{\alpha}^{(n+1){\rm e}_j } s_{\alpha+\gamma+{\rm e}_j } )_{|\gamma| =n'+2},
\end{equation}
then we must first show that \eqref{eq:Mar28l1} and \eqref{eq:Mar28l2} are consistent. To this end, we only have to check that
$$\sum_{0 \leq |\alpha| \leq n'} p_{\alpha}^{(n'+1){\rm e}_j } s_{\alpha+(n'+2){\rm e}_k+{\rm e}_j } = \sum_{0 \leq |\alpha| \leq n'} p_{\alpha}^{(n'+1){\rm e}_k } s_{\alpha+{\rm e}_k+(n'+2){\rm e}_j }  \quad {\rm for} \quad j,k=1,\ldots, d.$$
Since ${\rm col}(s_{\alpha + (n'+2){\rm e}_k + {\rm e}_j} )_{0 \leq |\alpha| \leq n'}$ is subvector of $X^{(n'+1){\rm e}_k}$ with rows indexed by $(X^{\alpha+{\rm e}_k+{\rm e}_j})_{0 \leq |\alpha| \leq n'}$, \eqref{eq:Mar28e1} imples that
$${\rm col}(s_{\alpha + (n'+2){\rm e}_k + {\rm e}_j} )_{0 \leq |\alpha| \leq n'} = \Phi\,  {\rm col}(p_{\alpha}^{ (n'+1){\rm e}_k })_{0 \leq |\alpha| \leq n'},$$
where $\Phi$ is the submatrix of $M(n'+1)$ with rows indexed by $(X^{\alpha +{\rm e}_k + {\rm e}_j })_{0 \leq |\alpha|\leq n}$ and columns indexed by $(X^{\alpha})_{0 \leq |\alpha| \leq n'}$. Lemma \ref{lem:dHHerm} implies that $\Phi = \Phi^*$. Thus, 
\begin{align*}
\sum_{0 \leq |\alpha| \leq n'} p_{\alpha}^{(n'+1){\rm e}_j } s_{\alpha+(n'+2){\rm e}_k+{\rm e}_j } =& \; {\rm row}(p_{\alpha}^{(n'+1) {\rm e}_j} )_{0 \leq |\alpha|\leq n'} \; {\rm col}(s_{\alpha+{\rm e}_k+{\rm e}_j } )_{0 \leq |\alpha|\leq n'} \\
=& \; {\rm row}(p_{\alpha}^{(n+1) {\rm e}_j} )_{0 \leq |\alpha|\leq n} \, \Phi\,  {\rm col}(p_{\alpha}^{ (n'+1){\rm e}_k })_{0 \leq |\alpha| \leq n'} \\
=& \; {\rm row}(p_{\alpha}^{(n+1) {\rm e}_k} )_{0 \leq |\alpha|\leq n'} \, \Phi\,  {\rm col}(p_{\alpha}^{ (n'+1){\rm e}_j })_{0 \leq |\alpha| \leq n'} \\
=& \; {\rm row}(p_{\alpha}^{(n'+1) {\rm e}_k} )_{0 \leq |\alpha|\leq n'} \; {\rm col}(s_{\alpha + (n'+2){\rm e}_j + {\rm e}_k} )_{0 \leq |\alpha| \leq n'} \\
=& \; \sum_{0 \leq |\alpha| \leq n'} p_{\alpha}^{(n'+1){\rm e}_k } s_{\alpha+{\rm e}_k+(n'+2){\rm e}_j }.
\end{align*}

Next, we shall show that
\begin{equation}
\label{eq:Mar29t1}
\col(s_{(n'+2){\rm e}_j+\lambda } )_{0 \leq |\lambda| \leq n'} = \col( \sum_{0 \leq |\alpha | \leq n'} p_{\alpha}^{(n'+1){\rm e}_j } s_{\alpha + \lambda + {\rm e}_j })_{0 \leq |\lambda| \leq n'}.
\end{equation}
Since $\col(s_{(n'+2){\rm e}_j+\lambda } )_{0 \leq |\lambda| \leq n'}$ is a subvector of $X^{(n'+1){\rm e}_j} \in \cC_{M(n'+1)}$ with rows indexed by 
$$(X^{ {\rm e}_j + \lambda} )_{0 \leq |\lambda| \leq n'},$$
\eqref{eq:Mar28e1} implies that \eqref{eq:Mar29t1} holds.

The above argument can easily be generalised to show the existence of extensions $(M(n'+a))_{a=2}^{\infty}$ such that $M(n'+a)$ has the property
$$X^{(n'+a){\rm e}_j} = (x^a p^{(n'+1){\rm e}_j})(X) \quad {\rm for} \quad j = 1,\ldots,d.$$
This completes Step 1.

It is perhaps best to consider the case when $d=2$. The more general case when $d > 2$ follows in much the same manner, albeit with heavier notation. We will use $x, y$ and $X,Y$ in place of $x_1, x_2$ and $X_1, Y_1$, respectively. If $d =2$, then \eqref{eq:Mar28kk3} is equivalent to showing that $M(n+a)$ has an extension $M(n+a+1)$ such that
\begin{equation}
\label{eq:Mar29e1}
X^{n'+1+a-m} Y^m = (x^{a-m}y^m p^{(n'+1,0)})(X,Y) \quad {\rm for} \quad m=0, \ldots, a
\end{equation}
and
\begin{equation}
\label{eq:Mar29e2}
X^m Y^{n'+1+a-m}  = (x^m y^{a-m}p^{(0, n'+1)})(X,Y) \quad {\rm for} \quad m=0, \ldots, a.
\end{equation}

We will show \eqref{eq:Mar29e1} and \eqref{eq:Mar29e2} by induction on $a$. Suppose $M(n'+a)$ has an extension $M(n'+a+1)$ such that \eqref{eq:Mar29e1} and \eqref{eq:Mar29e2} hold. We must show that $M(n+a+1)$ has an extension $M(n'+a+2)$ such that
\begin{equation}
\label{eq:Mar29e3}
X^{n'+a+2-m} Y^m = (x^{a+1-m}y^m p^{(n'+1,0)})(X,Y) \quad {\rm for} \quad m=0, \ldots, a+1
\end{equation}
and
\begin{equation}
\label{eq:Mar29e4}
X^m Y^{n'+a+2-m}  = (x^m y^{a+1-m}p^{(0, n'+1)})(X,Y) \quad {\rm for} \quad m=0, \ldots, a+1.
\end{equation}
It is readily realised that \eqref{eq:Mar29e3} and \eqref{eq:Mar29e4} are equivalent to 
\begin{align}
& \;\col(s_{n'+a-m+1+\alpha, m+1 + \beta})_{0 \leq \alpha+ \beta \leq n'+a+2} \nonumber \\
=& \; \col(\sum_{0 \leq j+k \leq n} p_{jk}^{(n'+1,0)} s_{j+\alpha+a-m,k+\beta+m+1} )_{0 \leq \alpha+\beta \leq n'+a+2} \label{eq:Mar29e7}
\end{align}
and
\begin{align}
& \; \col( s_{m+1+\alpha,n'+a-m+1+\beta})_{0 \leq \alpha + \beta \leq n'+a+2}\nonumber \\
=& \; \col(\sum_{0 \leq j+k \leq n'} p_{jk}^{(0,n'+1)} s_{j+\alpha+m+1, k+\beta+a-m} )_{0 \leq \alpha+\beta \leq n'}, \label{eq:Mar29e8}
\end{align}
respectively.


%
We will check that \eqref{eq:Mar29e7} and \eqref{eq:Mar29e8} hold by induction on $m$ in a number of successive steps.  The case that $m = 0$ follows from the definition of the extension $M(n'+a+2)$ in Step 1. If \eqref{eq:Mar29e7} and \eqref{eq:Mar29e8} hold for $m > 0$, then we must show that
$$
X^{n'+a+2-(m+1)} Y^{m+1} = (x^{a+1-(m+1)} y^{m+1} p^{(n'+1,0)} )(X,Y) \in \cC_{M(n'+a+2)}
$$
and
$$
X^{m+1} Y^{n'+a+2-(m+1)} = (x^{m+1} y^{a+1-(m+1)} p^{(0,n'+1)})(X,Y) \in \cC_{M(n'+a+2)} 
$$
hold, i.e.,
\begin{align}
& \; \col( s_{\alpha+n'-m+1,\beta+m+1} )_{0 \leq \alpha+\beta \leq n'+a+2} \nonumber \\
=& \; \col( \sum_{0 \leq j+k \leq n'} p_{jk}^{(n'+1,0)} s_{j+\alpha+a-m, k+\beta + m + 1} )_{0 \leq \alpha + \beta \leq n'+a+2} \label{eq:Mar29p1}
\end{align}
and
\begin{align}
& \; \col( s_{\alpha+m+1,\beta+n'+a+1-m} )_{0 \leq \alpha+\beta \leq n'+a+2} \nonumber \\
=& \; \col( \sum_{0 \leq j+k \leq n'} p_{jk}^{(0,n'+1)} s_{j+\alpha+m+1, k+\beta +a- m} )_{0 \leq \alpha + \beta \leq n'+a+2}. \label{eq:Mar29p2}
\end{align}

\bigskip

\noindent {\bf Step 2:} {\it Check that \eqref{eq:Mar29p1} holds when $0 \leq \alpha + \beta \leq n'+a$.}

\bigskip

If $0 \leq \alpha + \beta \leq n'+a$, then $\col (s_{\alpha+n'+a-m+1, \beta+m+1} )_{0 \leq \alpha + \beta \leq n'+a}$ is a subvector of
$$X^{n'+a-m+1}Y^m \in \cC_{M(n'+a+1)}$$
with rows indexed by
$$(X^{\alpha} Y^{\beta+1})_{0 \leq \alpha+\beta \leq n'+a}.$$
Thus, \eqref{eq:Mar29e3} implies that
\begin{align*}
& \; \col(s_{\alpha+n'+a-m+1,\beta+m+1} )_{0 \leq \alpha+\beta \leq n+a} \\
=& \; \col(\sum_{0 \leq j+k \leq n'} p_{jk}^{(n'+1,0)} s_{\alpha+j+a-m, \beta+k+m+1} )_{0 \leq \alpha+\beta \leq n'+a},
\end{align*}
i.e., \eqref{eq:Mar29p1} holds when $0 \leq \alpha + \beta \leq n'+a$.

\bigskip

\noindent {\bf Step 3:} {\it Check that \eqref{eq:Mar29p1} holds when $(\alpha,\beta) \in \Lambda^2_{n'+a+1} \, \setminus \, \{ (0,n'+a+1) \}$.}

\bigskip

Since 
$$\col(s_{\alpha+n'+a-m+1, \beta+m+1} )_{(\alpha,\beta) \in \Lambda^2_{n'+a+1} \setminus \{ (0,n'+a+1) \} }$$
is a subvector of
$$X^{n'+a+2-m}Y^m \in \cC_{M(n'+a+2)}$$
with rows indexed by
$$(X^{\tilde{\alpha}}Y^{\tilde{\beta}})_{(\tilde{\alpha},\tilde{\beta}) \in \Lambda^2_{n'+a+1} \setminus \{ (n'+a+1,0) \}},
$$
the induction hypothesis \eqref{eq:Mar29e7} implies that
\begin{align}
\; & \col(s_{\alpha+n'+a-m+1, \beta+m+1} )_{ (\alpha, \beta) \in \Lambda^2_{n'+a+1} \setminus \{ (0,n'+a+1) \} } \nonumber \\
=& \; \col (\sum_{0 \leq j+k \leq n'} p_{jk}^{(n+1, 0)} s_{\tilde{\alpha}+j+a-m+1,\tilde{\beta}+k+m} )_{(\tilde{\alpha},\tilde{\beta}) \in \Lambda^2_{n'+a+1} \setminus \{ (n'+a+1, 0) \} }. \label{eq:Apr10e1}
\end{align}

If $(\alpha,\beta) \in \Lambda^2_{n'+a+1} \, \setminus \, \{(0,n'+a+1) \}$, then
$$\alpha = n' +a+1-u \quad {\rm and} \quad \beta=u \quad {\rm for} \quad u=0,\ldots, n'+a.$$
Similarly, if $(\tilde{\alpha},\tilde{\beta}) \in \Lambda^2_{n'+a+1} \, \setminus \, (0,n'+a+1)$, then
$$
\tilde{\alpha} = n+a+1-\tilde{u} \quad {\rm and }\quad \tilde{\beta} =  \tilde{u} \quad {\rm for} \quad \tilde{u} = 1, \ldots, n'+a+1.
$$
Thus, \eqref{eq:Apr10e1} can be rewritten in the following manner:
\begin{align*}
& \; \col( \sum_{0 \leq j+k \leq n'} p_{jk}^{(n'+1,0)} s_{\tilde{\alpha}+j+a-m+1,\tilde{\beta}+k+m})_{(\tilde{\alpha},\tilde{\beta}) \in \Lambda^2_{n'+a+1} \setminus \{ (n'+a+1, 0) \} } \\
=& \; \col( \sum_{0 \leq j+k \leq n'} p_{jk}^{(n'+1,0)} s_{\tilde{u}+j+n'+a-m+1,\tilde{u}+k+m+1}  )_{\tilde{u}=1}^{n'+a+1} \\
=& \; \col( \sum_{0 \leq j+k \leq n'} p_{jk}^{(n'+1,0)} s_{j+a-(\tilde{u}-1)-m, k+(\tilde{u}-1)+m+1} )_{\tilde{u}=1}^{n'+a+1} \\
=& \; \col(\sum_{0 \leq j+k \leq n'} p_{jk}^{(n'+1,0)} s_{j+n+a+1-u-m, k+u+m+1} )_{u=0}^{n'+a} \\
=& \; \col( \sum_{0 \leq j+k \leq n'} p_{jk}^{(n'+1,0)} s_{\alpha+j-u-m, \beta+k+m+1} )_{(\alpha,\beta) \in \Lambda^2_{n'+a+1} \setminus \{ (0,n'+a+1) \} },
\end{align*}
i.e., \eqref{eq:Mar29p1} holds when $\alpha + \beta = n'+a+1$.

\bigskip

\noindent {\bf Step 5:} {\it Check that \eqref{eq:Mar29p1} holds when $\alpha =0$ and  $\beta = n'+a+1$.}

\bigskip

We must show that 
$$
s_{n'+a-m+1, m+1+n'+a+1} = \sum_{0 \leq j+k \leq n'} p_{jk}^{(n'+1,0)} s_{j+a-m,k+n'+a+1+m+1}
$$
Since
$$\col(s_{j+a-m, k+n'+a+m+2})_{0 \leq j+k \leq n'}$$
is a subvector of the column
$$Y^{n'+a+2} \in \cC_{M(n'+a+2)}$$
with rows indexed by
$$(X^{j+a-m} Y^{k+m})_{0 \leq j+k \leq n'},$$
from the definition of the extension $M(n'+a+2)$ in Step 1, we have
$$Y^{n'+a+2} = (y^{a+1} p^{(0,n'+1)})(X,Y)$$
and hence
$$\col(s_{j+a-m, k+n'+a+m+2})_{0 \leq j+k \leq n'} = \Phi \; \col(p_{jk}^{(0,n'+1)})_{0 \leq j+k \leq n'},$$
where $\Phi$ is the submatrix of $M(n'+a+2)$ with rows indexed by $(X^{j+a-m}Y^{k+m})_{0 \leq j+k \leq n'}$ and columns indexed by $(X^j Y^{k+a+1})_{0 \leq j+k \leq n'}$. It is readily checked that $\Phi = \Phi^*$. Thus, 
\begin{align*}
& \; \sum_{0 \leq j+k \leq n'} p_{jk}^{(n'+1,0)} s_{j+a-m, k+n'+a+m+2} \\
 =& \; \row(p_{jk}^{(0, n'+1)} )_{0 \leq j+k \leq n'} \, \Phi \, \col(p_{jk}^{(n'+1,0)} )_{0 \leq j+k \leq n'} \\
=& \; \row(p_{jk}^{(0,n'+1)})_{0 \leq j+k \leq n'} \, \Phi \, \col(p_{jk}^{(n'+1,0)})_{0 \leq j+k \leq n'} \\
=& \; \row(p_{jk}^{(0,n'+1)})_{0 \leq j+k \leq n'} \, \col(s_{j+n'+a+1-m, k+a+1+m})_{0 \leq j+k \leq n'},
\end{align*}
where \eqref{eq:Mar29e3}, with $m=0$, was used to obtain the last line above.

On the other hand, the number $s_{n'+a-m+1, m+n'+a+2}$ appears in the column $Y^{n'+a+2} \in \cC_{M(n'+a+2)}$ with row indexed by $X^{n'+a-m+1}Y^m$. Thus, \eqref{eq:Mar29e3} with $m = 0$ implies that
$$s_{n'+a-m+1,m+n'+a+2} = \sum_{0 \leq j+k \leq n'} p_{jk}^{(0,n'+1)} s_{j+n'-a-m+1,k+m+a+1}.$$
Thus, \eqref{eq:Mar29p1} holds when $\alpha = 0$ and $\beta = n'+a+1$.

\bigskip

\noindent {\bf Step 6:} {\it Check that \eqref{eq:Mar29p1} holds when $(\alpha,\beta) \in \Lambda^2_{n'+a+2} \, \setminus \, \{ (0,n'+a+2)\}$.}

\bigskip

Since 
$$\col(s_{\alpha+n'+a-m+1, \beta+m+1} )_{(\alpha,\beta) \in \Lambda^2_{n'+a+2} \setminus \{ (0,n'+a+2) \} }$$
is a subvector of
$$X^{n'+a+2-m}Y^m \in \cC_{M(n'+a+2)}$$
with rows indexed by
$$(X^{\tilde{\alpha}}Y^{\tilde{\beta}})_{(\tilde{\alpha},\tilde{\beta}) \in \Lambda^2_{n'+a+2} \setminus \{ (n'+a+2,0) \}},
$$
Steps 2-5 imply that
\begin{align}
\; & \col(s_{\alpha+n'+a-m+1, \beta+m+1} )_{ (\alpha, \beta) \in \Lambda^2_{n'+a+2} \setminus \{ (0,n'+a+2) \} } \nonumber \\
=& \; \col (\sum_{0 \leq j+k \leq n'} p_{jk}^{(n'+1, 0)} s_{\tilde{\alpha}+j+a-m+1,\tilde{\beta}+k+m} )_{(\tilde{\alpha},\tilde{\beta}) \in \Lambda^2_{n'+a+2} \setminus \{ (n'+a+2, 0) \} }. \label{eq:Apr10ee1}
\end{align}

If $(\alpha,\beta) \in \Lambda^2_{n'+a+2} \, \setminus \, \{(0,n'+a+2) \}$, then
$$\alpha = n' +a+2-u \quad {\rm and} \quad \beta=u \quad {\rm for} \quad u=0,\ldots, n'+a+1.$$
Similarly, if $(\tilde{\alpha},\tilde{\beta}) \in \Lambda^2_{n'+a+2} \, \setminus \, \{(0,n'+a+2) \}$, then
$$
\tilde{\alpha} = n'+a+1-\tilde{u} \quad {\rm and }\quad \tilde{\beta} =  \tilde{u} \quad {\rm for} \quad \tilde{u} = 1, \ldots, n'+a+2.
$$
Thus, \eqref{eq:Apr10ee1} can be rewritten in the following manner:
\begin{align*}
& \; \col( \sum_{0 \leq j+k \leq n'} p_{jk}^{(n'+1,0)} s_{\tilde{\alpha}+j+a-m+1,\tilde{\beta}+k+m})_{(\tilde{\alpha},\tilde{\beta}) \in \Lambda^2_{n'+a+1} \setminus \{ (n'+a+2, 0) \} } \\
=& \; \col( \sum_{0 \leq j+k \leq n'} p_{jk}^{(n'+1,0)} s_{\tilde{u}+j+n'+a-m+1,\tilde{u}+k+m+1}  )_{\tilde{u}=1}^{n'+a+2} \\
=& \; \col( \sum_{0 \leq j+k \leq n'} p_{jk}^{(n'+1,0)} s_{j+a-(\tilde{u}-1)-m, k+(\tilde{u}-1)+m+1} )_{\tilde{u}=1}^{n'+a+2} \\
=& \; \col(\sum_{0 \leq j+k \leq n'} p_{jk}^{(n+1,0)} s_{j+n'+a+1-u-m, k+u+m+1} )_{u=0}^{n'+a+1} \\
=& \; \col( \sum_{0 \leq j+k \leq n'} p_{jk}^{(n'+1,0)} s_{\alpha+j-u-m, \beta+k+m+1} )_{(\alpha,\beta) \in \Lambda^2_{n'+a+1} \setminus \{ (0,n'+a+2) \} },
\end{align*}
i.e., \eqref{eq:Mar29p1} holds when $(\alpha,\beta) \in \Lambda^2_{n'+a+2} \, \setminus \, \{ (0,n'+a+2)\}$.

\bigskip

\noindent {\bf Step 7:} {\it Check that \eqref{eq:Mar29p1} holds when $\alpha =0$ and  $\beta = n'+a+2$.}

\bigskip

We must show that 
$$
s_{n'+a-m+1, m+1+n'+a+2} = \sum_{0 \leq j+k \leq n'} p_{jk}^{(n'+1,0)} s_{j+a-m,k+n'+a+2+m+1}
$$
Since

$$\col(s_{j+a-m, k+n'+a+m+2})_{0 \leq j+k \leq n'}$$

is a subvector of the column

$$Y^{n'+a+2} \in \cC_{M(n'+a+2)}$$
with rows indexed by
$$(X^{j+a-m} Y^{k+m})_{0 \leq j+k \leq n'},$$
from the definition of the extension $M(n'+a+2)$ in Step 1, we have
$$Y^{n'+a+2} = (y^{a+1} p^{(0,n'+1)})(X,Y)$$
and hence
$$\col(s_{j+a-m, k+n'+a+m+2})_{0 \leq j+k \leq n'} = \wt{\Phi} \; \col(p_{jk}^{(0,n+1)})_{0 \leq j+k \leq n'},$$
where $\wt{\Phi}$ is the submatrix of $M(n'+a+2)$ with rows indexed by $(X^{j+a-m}Y^{k+m+1})_{0 \leq j+k \leq n'}$ and columns indexed by $(X^j Y^{k+a+1})_{0 \leq j+k \leq n'}$. It is readily checked that $\wt{\Phi} = \wt{\Phi}^*$. Thus, 
\begin{align*}
& \; \sum_{0 \leq j+k \leq n'} p_{jk}^{(n'+1,0)} s_{j+a-m, k+n'+a+2+m+1} \\
 =& \; \row(p_{jk}^{(0, n'+1)} )_{0 \leq j+k \leq n'} \, \wt{\Phi} \, \col(p_{jk}^{(n'+1,0)} )_{0 \leq j+k \leq n'} \\
=& \; \row(p_{jk}^{(0,n'+1)})_{0 \leq j+k \leq n'} \, \wt{\Phi} \, \col(p_{jk}^{(n'+1,0)})_{0 \leq j+k \leq n'} \\
=& \; \row(p_{jk}^{(0,n'+1)})_{0 \leq j+k \leq n'} \, \col(s_{j+n'+a+1-m, k+a+2+m})_{0 \leq j+k \leq n'},
\end{align*}
where \eqref{eq:Mar29e3}, with $m=0$, was used to obtain the last line above.

On the other hand, the number $s_{n'+a-m+1, m+n'+a+2}$ appears in the column $Y^{n'+a+2} \in \cC_{M(n'+a+2)}$ with row indexed by $X^{n'+a-m+1}Y^m$. Thus, \eqref{eq:Mar29e3} with $m = 0$ implies that
$$s_{n'+a-m+1,m+n+a+2} = \sum_{0 \leq j+k \leq n'} p_{jk}^{(0,n'+1)} s_{j+n'-a-m+1,k+m+1+a+1}.$$
Thus, \eqref{eq:Mar29p1} holds when $\alpha = 0$ and $\beta = n'+a+1$.

\bigskip

\noindent {\bf Step 8:} {\it Check that \eqref{eq:Mar29e8} holds.}

\bigskip

Assertion \eqref{eq:Mar29e8} can be verified in much the same way as \eqref{eq:Mar29e7} by a suitable adjustment of Steps 2-7.

\end{proof}

\section{Integral representations for $(s_{\gamma})_{\stackrel{0 \leq |\gamma | \leq 2n}{\gamma \in \NN_0^d}}$}
\label{sec:CFana}
In this section, we will formulate and prove main result, i.e., we will show that any truncated multisequence $(s_{\gamma})_{0 \leq |\gamma| \leq 2n}$ that has a rank preserving extension which gives rise to diagonalisable shift operators has a quasi-complex representing measure. In particular, if the shift operators have real eigenvalues, or, equivalently, the associated polynomial is radical with real variety, then the representing measure can be chosen to be signed.

\begin{defn}[Variety of a $d$-Hankel matrix]
\label{def:Variety}
Let $s = (s_{\gamma} )_{\stackrel{0 \leq |\gamma| \leq 2n}{\gamma \in \NN_0^d}}$ be a given real-valued multisequence and $M(n)$ be as in Definition \ref{def:Feb13dH}.
For $p \in \CC^d[ {\bf x} ]$, let
$$\cZ(p) = \{ z \in \CC^d: p(z) = 0 \}.$$
We will denote the {\it variety of $M(n)$} by $\cV(M(n))$ which is given by
\begin{equation}
\label{eq:Variety}
\cV(M(n)) := \bigcap_{\stackrel{p \in \RR^d_n[ {\bf x} ] }{p(X) = {\bf 0}} } \cZ(p).
\end{equation}
\end{defn}

\begin{rem}
\label{rem:Variety}
The variety of $M(n)$ appeared first in \cite{CF_memoir} (albeit implicitly in \cite{CF_memoir} and explicitly in subsequent works of Curto and Fialkow). It is also important to note that in Curto and Fialkow's papers on truncated multidimensional moment problems on $\RR^d$, if $M(n) \succeq 0$ and has a flat extension, i.e., a rank preserving extension $M(n+1)$ such that $M(n+1) \succeq 0$, then $\cV(M(n)) \subseteq \RR^d$.
\end{rem}

\begin{lem}
\label{lem:June21variety}
Let $s = (s_{\gamma} )_{0 \leq |\gamma| \leq 2n}$ be a given truncated multisequence of real numbers. If $M(n)$ has a rank preserving extensions $M(n+1)$ and $M(n+2)$, then $\cV(M(n+1)) = \cV(M(n+2))$. 
\end{lem}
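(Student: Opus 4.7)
The plan is to prove the two inclusions $\cV(M(n+2)) \subseteq \cV(M(n+1))$ and $\cV(M(n+1)) \subseteq \cV(M(n+2))$ separately, using a consistent basis for the column spaces and, for the harder direction, the infinite rank-preserving extension produced by Theorem \ref{thm:June14k1}.

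First I would fix a basis $\cB = \{X^{\lambda^{(1)}}, \dots, X^{\lambda^{(r)}}\}$ of $\cC_{M(n)}$ with $|\lambda^{(a)}| \leq n$; by rank preservation the same columns remain linearly independent and of full column rank in both $\cC_{M(n+1)}$ and $\cC_{M(n+2)}$, so $\cB$ is a basis there as well. For each $\gamma$ with $|\gamma| \leq n+2$ there are then unique scalars $c_{\gamma,a}$ with $X^{\gamma} = \sum_{a} c_{\gamma,a} X^{\lambda^{(a)}}$ in $\cC_{M(n+2)}$, and restricting to the first $\ell(n+1,d)$ rows shows the same coefficients work in $\cC_{M(n+1)}$ whenever $|\gamma| \leq n+1$. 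Consequently, $\cI_{n+k} := \{p \in \RR^d_{n+k}[{\bf x}] : p(X) = {\bf 0} \in \cC_{M(n+k)}\}$ is the $\RR$-span of $\{x^{\gamma} - \sum_a c_{\gamma,a} x^{\lambda^{(a)}}\}_{|\gamma| \leq n+k}$ for $k=1,2$.

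The inclusion $\cV(M(n+2)) \subseteq \cV(M(n+1))$ is the easy direction: every $p \in \cI_{n+1}$ automatically lies in $\cI_{n+2}$, because when $p$ is expanded in the basis $\cB$ the coefficients do not depend on whether the expansion is taken in $\cC_{M(n+1)}$ or $\cC_{M(n+2)}$. For the reverse inclusion $\cV(M(n+1)) \subseteq \cV(M(n+2))$, I would apply Theorem \ref{thm:June14k1} with $n' = n+1$ to the rank-preserving extension $M(n+1) \to M(n+2)$ so as to obtain a full sequence $(s_{\gamma})_{\gamma \in \NN_0^d}$ with $\rank M(\infty) = r$. By Lemma \ref{lem:ideal} the set $\cI := \{p \in \RR[x_1,\dots,x_d] : p(X) = {\bf 0} \in \cC_{M(\infty)}\}$ is an ideal of $\RR[x_1,\dots,x_d]$, and the same basis argument shows that $\cI$ is spanned by $\{x^{\gamma} - \sum_{a} c_{\gamma,a} x^{\lambda^{(a)}}\}_{\gamma \in \NN_0^d}$ and that $\cI \cap \RR^d_{n+k}[{\bf x}] = \cI_{n+k}$ for $k=1,2$.

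For $z \in \cV(M(n+1))$, I would then prove by induction on $|\gamma|$ that $z^{\gamma} = \sum_{a} c_{\gamma,a} z^{\lambda^{(a)}}$. The base case $|\gamma| \leq n+1$ is immediate because $x^{\gamma} - \sum_a c_{\gamma,a} x^{\lambda^{(a)}} \in \cI_{n+1}$. For $|\gamma| = k > n+1$, write $\gamma = \xi + {\rm e}_j$ with $|\xi| = k-1$; the ideal property yields $x^{{\rm e}_j}(x^{\xi} - \sum_a c_{\xi,a} x^{\lambda^{(a)}}) \in \cI$, which by uniqueness of the expansion in $\cB$ translates into the algebraic identity $c_{\gamma,b} = \sum_a c_{\xi,a}\, c_{\lambda^{(a)} + {\rm e}_j, b}$. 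Applying the inductive hypothesis to $\xi$ (magnitude $k-1 < k$) and to each $\lambda^{(a)} + {\rm e}_j$ (magnitude $\leq n+1 < k$) collapses $z^{\gamma} = z^{{\rm e}_j} z^{\xi}$ to $\sum_a c_{\gamma,a} z^{\lambda^{(a)}}$. Restricted to $|\gamma| \leq n+2$, this says $z$ annihilates every generator of $\cI_{n+2}$, so $z \in \cV(M(n+2))$.

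The main obstacle is exactly the inductive step: without passing to the infinite extension supplied by Theorem \ref{thm:June14k1} and exploiting ideal closure via Lemma \ref{lem:ideal}, one cannot directly lift the column relation $X^{\xi} = \sum_a c_{\xi,a} X^{\lambda^{(a)}}$ from $\cC_{M(n+1)}$ to the relation $X^{\xi + {\rm e}_j} = \sum_a c_{\xi,a} X^{\lambda^{(a)} + {\rm e}_j}$ inside $\cC_{M(n+2)}$, since entry-wise verification of the latter would require moment entries of total degree exceeding $2(n+2)$. The infinite extension removes precisely this obstruction by making such shifted relations legitimate identities in $\cC_{M(\infty)}$.
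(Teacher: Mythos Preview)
Your argument is correct. For the inclusion $\cV(M(n+2)) \subseteq \cV(M(n+1))$ you do essentially what the paper does (the paper phrases it via the block decomposition of Lemma~\ref{lem:Mar31e3}, you via consistency of the coefficients $c_{\gamma,a}$; these are equivalent).

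For the harder inclusion $\cV(M(n+1)) \subseteq \cV(M(n+2))$ you take a genuinely different route. The paper simply invokes an adaptation of Curto--Fialkow \cite[Theorem~2.4]{CF_several}, whose proof works directly inside the finite matrices $M(n+1)$ and $M(n+2)$ and uses the recursive-generation property of rank-preserving extensions to show that every degree-$(n+2)$ column relation is obtained by shifting a degree-$(n+1)$ one. You instead pass to the infinite rank-preserving extension furnished by Theorem~\ref{thm:June14k1}, exploit that $\cI$ is an ideal (Lemma~\ref{lem:ideal}) to obtain the algebraic identity $c_{\gamma,b} = \sum_a c_{\xi,a}\,c_{\lambda^{(a)}+{\rm e}_j,b}$, and then evaluate at $z$ by induction on $|\gamma|$. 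Your approach has the advantage of being entirely self-contained within results already proved in the paper, and it makes the ideal-theoretic mechanism transparent; its cost is that it invokes Theorem~\ref{thm:June14k1}, which is a substantially heavier result than what the direct Curto--Fialkow argument needs. Note also that since you only need the identity for $|\gamma| \le n+2$, the induction really has a single nontrivial step (from $n+1$ to $n+2$), so a one-step extension $M(n+3)$ would already suffice in place of the full $M(\infty)$---but routing through $M(\infty)$ and Lemma~\ref{lem:ideal} is cleaner to state.
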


\begin{proof}
Since $\rank M(n+2) = \rank M(n+1)$, Lemma \ref{lem:Mar31e3} implies that we $M(n+2)$ can be written as
$$M(n+2) = \begin{pmatrix} A & AW \\ W^T A & W^T A W \end{pmatrix},$$
where $A = M(n+1)$ and $W \in \RR^{ \ell(n+1,d) \times \ell(n+2,d)}$. Consequently, if $p(X) = {\bf 0} \in \cC_{M(n+1)}$ for some $p \in \RR^d_{n+1}[ {\bf x} ]$, $p(X) = {\bf 0} \in \cC_{M(n+2)}$. Indeed, if we write $p(x) = \sum_{0 \leq |\lambda| \leq n+1} p_{\lambda} x^{\lambda}$ and put $v = \col(p_{\lambda})_{0 \leq |\lambda | \leq n+1}$, then 
$$\tilde{v} = \begin{pmatrix} v \\ 0 \end{pmatrix}$$
will satisfy
$$M(n+2)\tilde{v} = \begin{pmatrix} A v \\ W^T A v \end{pmatrix} = {\bf 0}.$$ 
Thus, $\cV(M(n+2)) \subseteq \cV(M(n+1)).$

The reverse inclusion $\cV(M(n+1)) \subseteq \cV(M(n+2))$ can be proven by a straight forward adaptation of the proof of Theorem 2.4 in \cite{CF_several} to our present real multidimensional setting. 
\end{proof}

In the following theorem and corollary, corresponding to a monomial basis $\cB = \{ X^{\lambda^{(a)}} \}_{a=1}^r$ of $\cC_{M(n)}$, we shall let
$$\cB + {\rm e}_j = \{ X^{\lambda} + {\rm e}_j: X^{\lambda} \in \cB \}$$
and $M_{\cB}(n)$ and $M_{\cB, \cB+{\rm e}_j }(n+1)$ denote the principal submatrix of $M(n)$ with rows and columns indexed by $\cB$ and the submatrix of $M(n+1)$ with rows indexed by $\cB$ and columns indexed by $\cB+{\rm e}_j$, respectively.

\begin{thm}
\label{thm:Feb7m1}
Let $s = (s_{\gamma} )_{0 \leq |\gamma| \leq 2n}$ be a given multisequence of real numbers and $r := \rank M(n)$. $s$ has an $r$-atomic quasi-complex representing measure $\mu = \sum_{a=1}^r \varrho_a \delta_{z^{(a)}}$ if and only if $M(n)$ has a rank preserving extension $M(n+1)$ and
$$
\text{$T_{j}$ is diagonalisable for all $j=1,\ldots, d$,}
$$
where $T_j := M_{\cB}(n)^{-1} M_{\cB, \cB+ {\rm e}_j}(n+1)$ for $j=1,\ldots, d$. 
In this case,
\begin{equation}
\label{eq:m1e2}
\supp \mu = \cV(M(n+1))
\end{equation}
and the weights $\varrho_1, \ldots, \varrho_r$ can be computed via \eqref{eq:WEIGHTS}. If, in addition,
$$\cV(M(n+1)) \subseteq \RR^d,$$ 
then $\mu = \mu_+ - \mu_-$ is a $\rank M(n)$-atomic signed representing measure for $s$ and the measures $\mu_{\pm}$ in the above Jordan decomposition of $\mu$ satisfy
\begin{equation}
\label{eq:INERTPOS}
\card \supp \mu_{\pm} = i_{\pm}(M(n)).
\end{equation}
\end{thm}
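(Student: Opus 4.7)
The plan is to reduce to Theorem \ref{thm:Oct28fas} by leveraging Theorem \ref{thm:June14k1}, which extends any rank preserving $M(n+1)$ to a sequence of rank preserving extensions $(M(n+k))_{k\geq 1}$ and hence produces a full multisequence with $\rank M(\infty) = r$. The Pontryagin space $\cA$ and shift operators $\Theta_j$ constructed in Sections \ref{sec:Pontryagin}--\ref{sec:full} then apply to this extended sequence, and the issue is to match them with the concrete matrix data $T_j$.

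For the sufficiency direction, I would first observe that the basis $\cB = \{X^{\lambda^{(a)}}\}_{a=1}^r$ of $\cC_{M(n)}$ remains a basis of $\cC_{M(\infty)}$ (by the rank preserving property together with Lemma \ref{lem:Feb17rl3}). Writing $X^{\lambda^{(a)} + {\rm e}_j} = \sum_b t_{ba}^{(j)} X^{\lambda^{(b)}}$ in $\cC_{M(n+1)}$ and restricting to the rows indexed by $\cB$ yields $M_{\cB, \cB+{\rm e}_j}(n+1) = M_{\cB}(n) \, T_j$, where $T_j = (t_{ba}^{(j)})$. Since $M_{\cB}(n)$ is invertible, $T_j$ coincides with the matrix representation of $\Theta_j$ in the basis $(x^{\lambda^{(a)}} + \cI)_{a=1}^r$ of $\cA$; diagonalisability therefore transfers from $T_j$ to $\Theta_j$, and Theorem \ref{thm:Oct28fas} delivers the $r$-atomic quasi-complex representing measure $\mu$ with weights \eqref{eq:WEIGHTS} and $\supp \mu = \cV(\cI)$. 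To identify $\cV(\cI)$ with $\cV(M(n+1))$, I would iterate Lemma \ref{lem:June21variety} to obtain $\cV(M(n+k)) = \cV(M(n+1))$ for all $k \geq 1$ and combine this with Lemma \ref{lem:July19k1} to show that $\cI$ is generated by its degree $\leq n+1$ part, giving \eqref{eq:m1e2}.

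For necessity, given $\mu = \sum_a \varrho_a \delta_{z^{(a)}}$, extend $s$ to $\NN_0^d$ by $s_\gamma := \sum_a \varrho_a (z^{(a)})^\gamma$. Lemma \ref{lem:Feb22supp} yields $\rank M(n+k) \leq r$ for each $k$, while trivially $\rank M(n+k) \geq \rank M(n) = r$, so the extension is automatically rank preserving; the factorisations of Lemma \ref{lem:July18prelim1} then give
$$
T_j \;=\; M_{\cB}(n)^{-1} M_{\cB,\cB+{\rm e}_j}(n+1) \;=\; V_{\cB}^{-1} Z^{(j)} V_{\cB},
$$
which is similar to the diagonal matrix $Z^{(j)}$ and hence diagonalisable. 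If in addition $\cV(M(n+1)) \subseteq \RR^d$, then each $z^{(a)}$ is real and $\mu$ is a signed measure; the factorisation $M_{\cB}(n) = V_{\cB}^T R V_{\cB}$ combined with Sylvester's law of inertia gives $i_{\pm}(M_{\cB}(n)) = i_{\pm}(R) = \card \supp \mu_{\pm}$, while a permuted application of Lemma \ref{lem:Mar31e3} (taking $M_{\cB}(n)$ as the full-rank block) yields $i_{\pm}(M(n)) = i_{\pm}(M_{\cB}(n))$, producing \eqref{eq:INERTPOS}. The main obstacle I anticipate is the variety identity $\cV(\cI) = \cV(M(n+1))$: verifying that $\cI$ is actually generated by its degree $\leq n+1$ part requires careful iterated application of Lemmas \ref{lem:July19k1} and \ref{lem:June21variety} in conjunction with the full extension produced by Theorem \ref{thm:June14k1}, and is the step where the rank preserving hypothesis is essential.
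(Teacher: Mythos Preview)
Your proposal is correct and follows essentially the same route as the paper: extend via Theorem~\ref{thm:June14k1}, identify $T_j$ with the matrix of $\Theta_j$ in the basis $\cB$, invoke Theorem~\ref{thm:Oct28fas}, and for necessity use the Vandermonde factorisations of Lemma~\ref{lem:July18prelim1} to display $T_j = V_{\cB}^{-1} Z^{(j)} V_{\cB}$.

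Two minor remarks. First, your ``main obstacle'' is less severe than you fear: you do not need to show that $\cI$ is generated by its degree~$\leq n+1$ part, nor to invoke Lemma~\ref{lem:July19k1}. Iterating Lemma~\ref{lem:June21variety} alone gives $\cV(M(n+k)) = \cV(M(n+1))$ for every $k \geq 1$; since any $p \in \cI$ has finite degree and hence satisfies $p(X) = {\bf 0} \in \cC_{M(n+k)}$ for some $k$, one has $\cV(\cI) = \bigcap_{k \geq 1} \cV(M(n+k)) = \cV(M(n+1))$ directly. This is exactly what the paper does. Second, your handling of \eqref{eq:INERTPOS} via a permuted application of Lemma~\ref{lem:Mar31e3} (to pass from $i_{\pm}(M_{\cB}(n))$ to $i_{\pm}(M(n))$) is in fact more explicit than the paper, which cites \eqref{eq:Hahn1} and $M_{\cB}(n) = M_{\cB}(\infty)$ but leaves that last inertia step implicit.
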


\begin{proof}
Suppose $\mu = \sum_{a=1}^r \varrho_a \delta_{z^{(a)}}$ is an $r$-atomic representing measure for $s$ and consider the real multisequence $(s_{\gamma})_{\gamma \in \NN_0^d}$, where
$$s_{\gamma} = \int_{\CC^d} z^{\gamma} d\mu(z) \quad {\rm for} \quad \gamma \in \NN_0^d.$$
Lemma \ref{lem:Feb22m1} implies that
$$\rank M(n+1) \leq \card \supp \mu.$$
Since $r := \rank M(n) = \card \supp \mu$, 
$$\rank M(n+1) \leq r.$$
The equality $\rank M(n+1) = \rank M(n)$ then follows from 
$$\rank M(n) \leq \rank M(n+1).$$
We may continue continue in this manner and verify that $\rank M(\infty) = \rank M(n)$. Thus, the basis $\cB = \{ X^{\lambda^{(1)}}, \ldots, X^{\lambda^{(r)}} \}$ for $M(n)$ can easily be identified with a basis for $\cC_{M(\infty)}$ (by just interpreting the column vectors appropriately). With a slight abuse of notation we will use $\cB$ for both bases. Consequently,
$$M_{\cB}(n) = M_{\cB}(\infty)$$
and
$$M_{\cB,\cB+{\rm e}_j}(n+1) = M_{\cB, \cB+{\rm e}_j}(\infty) \quad\quad {\rm for} \quad j=1,\ldots,d$$
and hence the factorisations \eqref{eq:FF1} and \eqref{eq:FF2} and the invertibility of $V_{\cB}$ imply that
\begin{equation}
\label{eq:DIAG}
T_j = M_{\cB}(n)^{-1} M_{\cB, \cB + {\rm e}_j}(n+1) = V_{\cB}^{-1} Z^{(j)} V_{\cB} \quad\quad {\rm for }\quad j=1,\ldots,d
\end{equation}
i.e., $T_j$ is diagonalisable for $j=1,\ldots,d$. 

Conversely, suppose $M(n)$ has a rank preserving extension $M(n+1)$. Theorem \ref{thm:June14k1} asserts the existence of rank preserving extensions $(M(n+m))_{m=2}^{\infty}$ such that $\rank M(\infty) = \rank M(n)$. Let $\cI$ and $\cA :=\RR[x_1, \ldots, x_d] / \cI $ be the polynomial ideal and quotient algebra, respectively, associated with $M(\infty)$ (see Section \ref{sec:full}). Let $\cB = \{ X^{\lambda^{(a)}} \}_{a=1}^r$ be a monomial basis for $\cC_{M(n)}$, where $r = \rank M(n) = \rank M(\infty)$. Since $\rank M(n) = \rank M(\infty)$, $\cB$ (when suitably embedded in $\cC_{M(\infty)}$) is also a basis for $\cC_{M(\infty)}$. Thus, as 
$$T_j = M_{\cB}(n)^{-1} M_{\cB,\cB+ {\rm e}_j}(n+1)$$
is nothing but a matrix representation for $\Theta_j: \cA \to \cA$ with respect to the basis $\cB$ (see \eqref{eq:Feb15shift}), we have that $\Theta_j$ is diagonalisable for $j=1,\ldots,d$. We may use Theorem \ref{thm:Oct28fas} to obtain an $r$-atomic quasicomplex representation measure $\mu = \sum_{a=1}^r \varrho_a \delta_{z^{(a)}}$ for $s$ such that
$$\supp \mu = \cV(\cI)$$
and the weights $\varrho_1, \ldots, \varrho_r$ may be computed by \eqref{eq:WEIGHTS}. If we apply Lemma \ref{lem:June21variety} to $M(n+1), M(n+2), \ldots$, then we obtain 
$$\cV(M(n+1)) = \cV(\cI)$$
and hence \eqref{eq:m1e2} holds.

%


Finally, if $\cV(M(n+1)) \subseteq \RR^d$, then $\supp \mu \subseteq \RR^d$ follows from \eqref{eq:Feb18v1}. Assertion \eqref{eq:INERTPOS} follows from the fact that $M_{\cB}(n) = M_{\cB}(\infty)$ and \eqref{eq:Hahn1}.
\end{proof}

The following corollary is a restatement of the final conclusion of Theorem \ref{thm:Feb7m1}.

\begin{cor}
\label{cor:Nov1}
Let $s = (s_{\gamma} )_{0 \leq |\gamma| \leq 2n}$ be a given multisequence of real numbers and $\kappa_{\pm} := i_{\pm}(M(n))$. $s$ has a representing measure $\mu$ with Jordan decomposition $\mu = \mu_+ - \mu_-$ with $\card \supp \mu_{\pm} = \kappa_{\pm}$ if and only if $M(n)$ has a rank preserving extension $M(n+1)$ with the following properties{\rm :}
$$
\text{$T_{j}$ is diagonalisable for all $j=1,\ldots, d$,}
$$
where $T_j := M_{\cB}(n)^{-1} M_{\cB, \cB+ {\rm e}_j}(n+1)$ for $j=1,\ldots, d$ and 
$$\cV(M(n+1)) \subseteq \RR^d.$$ In this case $\mu$ can be constructed as in Theorem {\rm \ref{thm:Feb7m1}}.
\end{cor}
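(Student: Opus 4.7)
The plan is to deduce this corollary directly from Theorem~\ref{thm:Feb7m1}, since essentially the only new content is the observation that the cardinalities $\kappa_{\pm}$ and the rank $r := \rank M(n)$ are related by $r = \kappa_+ + \kappa_-$ (which holds because $M(n)$ is Hermitian, so its rank equals the total count of nonzero eigenvalues). This means any signed representing measure $\mu = \mu_+ - \mu_-$ with $\card \supp \mu_{\pm} = \kappa_{\pm}$ is automatically $r$-atomic, since the Jordan decomposition gives disjoint supports $\supp \mu_+ \cap \supp \mu_- = \emptyset$ and hence $\card \supp \mu = \kappa_+ + \kappa_- = r$.

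For the forward implication, I would first observe that such a $\mu$ is a (real-valued) signed, hence quasi-complex, representing measure for $s$ with exactly $r = \rank M(n)$ atoms. Applying Theorem~\ref{thm:Feb7m1}, the existence of an $r$-atomic quasi-complex representing measure yields a rank preserving extension $M(n+1)$ such that each shift matrix $T_j = M_{\cB}(n)^{-1} M_{\cB, \cB + {\rm e}_j}(n+1)$ is diagonalisable. Moreover, formula \eqref{eq:m1e2} in that theorem gives $\supp \mu = \cV(M(n+1))$; since $\mu$ is real-valued with $\supp \mu \subseteq \RR^d$, we conclude $\cV(M(n+1)) \subseteq \RR^d$, as required.

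For the reverse implication, I would invoke the ``if, in addition'' part of Theorem~\ref{thm:Feb7m1}: given a rank preserving extension $M(n+1)$ with the $T_j$ diagonalisable and $\cV(M(n+1)) \subseteq \RR^d$, the theorem produces an $r$-atomic signed representing measure $\mu = \mu_+ - \mu_-$ satisfying \eqref{eq:INERTPOS}, namely $\card \supp \mu_{\pm} = i_{\pm}(M(n)) = \kappa_{\pm}$. This is precisely the desired measure, and the explicit construction (diagonalisation of the $T_j$ to obtain the atoms $z^{(1)}, \ldots, z^{(r)}$, followed by the weight formula \eqref{eq:WEIGHTS}) is inherited verbatim from Theorem~\ref{thm:Feb7m1}.

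There is no substantive obstacle here, since every ingredient has already been established; the only point worth making explicit is the identity $r = \kappa_+ + \kappa_-$ that identifies the minimality condition ``$\card \supp \mu_{\pm} = i_{\pm}(M(n))$'' with the $r$-atomic minimality condition of the theorem. In particular, no additional extension or inertia argument is needed, and Sylvester's law of inertia (used inside the proof of Theorem~\ref{thm:Feb7m1}) already accounts for the splitting of $r$ into positive and negative parts via the factorisation $M_{\cB}(n) = V_{\cB}^T R V_{\cB}$ in Lemma~\ref{lem:July18prelim1}.
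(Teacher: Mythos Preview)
Your proposal is correct and aligns with the paper's approach: the paper simply declares this corollary a restatement of the final conclusion of Theorem~\ref{thm:Feb7m1} without further argument, and your write-up supplies exactly the bridging observation ($r=\kappa_++\kappa_-$ for the Hermitian matrix $M(n)$, so the Jordan-decomposition condition is equivalent to $r$-atomicity) that makes that restatement transparent. The forward and reverse implications you outline are precisely the two directions of Theorem~\ref{thm:Feb7m1} together with \eqref{eq:m1e2} and \eqref{eq:INERTPOS}, so nothing additional is needed.
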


\begin{rem}
\label{rem:POSCASE}
If $s$ in Theorem \ref{thm:Feb7m1} is positive definite, i.e., $M(n) \succeq 0$, then Lemma \ref{lem:Mar31e3} implies that the rank preserving extension $M(n+1)$ must satisfy $M(n+1) \succeq 0$. Thus, in the terminology of Curto and Fialkow (see, e.g., \cite{CF_memoir}, \cite{CF_several}), $M(n+1)$ is a {\it flat extension} of $M(n)$. Repeated applications of Lemma \ref{lem:Mar31e3} serve to show that all of the subsequent extensions $(M(n+a))_{a=2}^{\infty}$ would also flat and the Pontryagin space constructed in Section \ref{sec:Pontryagin} is a Hilbert space. In this case, the diagonisability of $T_j := M_{\cB}(n)^{-1} M_{\cB, \cB + {\rm e}_j }(n+1)$ for $j=1,\ldots, d$, or, equivalently, that $\cI$ is a real radical ideal, comes as a direct consequence of $M(n) \succeq 0$ (see Lemma 5.2 in \cite{Laurent}). 

\end{rem}

\begin{thm}
\label{thm:Oct11e1}
Let $s = (s_{\gamma})_{0 \leq |\gamma| \leq 2n'}$ be a given real-valued truncated multisequence. If $M(n')$ has an extension $M(n'+1)$ such that
\begin{equation}
\label{eq:poly1}
p^{(n'+1){\rm e}_j}(X) = {\bf 0 } \in \cC_{M(n'+1)} \quad \quad {\rm for} \quad j=1,\ldots, d,
\end{equation}
form some polynomials $p^{(n'+1){\rm e}_j}(x) \in \RR^d_{n'+1}[{\bf x}]$ which have $n'+1$ distinct real zeros, i.e.,
$$p^{(n'+1){\rm e}_j}(x_1, \ldots, x_d) = \prod_{a=1}^{n'+1} (x_j - w_j^{(a)} ) \quad \quad {\rm for} \quad j=1,\ldots,
$$
where $w_j^{(1)}, \ldots, w_j^{(n'+1)}$ are distinct for any fixed $j \in \{1, \ldots d \}$, then  $s$ has a signed representing measure $\mu = \mu_+ - \mu_-$ with the following properties{\bf :}
\begin{equation}
\label{eq:M3}
\rank M(n') \leq \card \mu \leq (n'+1)^d
\end{equation}
and
\begin{equation}
\label{eq:M4}
\card \supp \mu_{\pm} \geq i_{\pm}(M(n')).
\end{equation}
\end{thm}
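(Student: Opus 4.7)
The plan is to use hypothesis \eqref{eq:poly1} as the input to Lemma \ref{lem:Mar28l1}, thereby extending $M(n'+1)$ to an entire sequence of extensions $(M(n'+a))_{a=2}^{\infty}$ and producing a full multisequence $(s_{\gamma})_{\gamma \in \NN_0^d}$ with $r := \rank M(\infty) = \rank M(n'd) < \infty$. This places us in the setting where Theorem \ref{thm:Oct28fas} can be applied to the resulting finite-rank infinite $d$-Hankel matrix.

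Next I would analyse the associated ideal $\cI \subseteq \RR[x_1, \ldots, x_d]$ and quotient algebra $\cA := \RR[x_1, \ldots, x_d]/\cI$. Since $p^{(n'+1){\rm e}_j}(X) = {\bf 0}$ in $\cC_{M(\infty)}$, each $p^{(n'+1){\rm e}_j}$ lies in $\cI$, so
$$\cV(\cI) \subseteq \bigcap_{j=1}^d \cZ(p^{(n'+1){\rm e}_j}) = \{(w_1^{(a_1)}, \ldots, w_d^{(a_d)}) : 1 \leq a_j \leq n'+1\} \subseteq \RR^d,$$
which is a set of at most $(n'+1)^d$ real points. Because $\cI$ is an ideal, the shift operator $\Theta_j: \cA \to \cA$ satisfies $p^{(n'+1){\rm e}_j}(\Theta_j) = 0$; as $p^{(n'+1){\rm e}_j}$ has $n'+1$ distinct roots by hypothesis, the minimal polynomial of $\Theta_j$ is separable, so each $\Theta_j$ is diagonalisable for $j=1,\ldots,d$.

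Applying Theorem \ref{thm:Oct28fas} then produces an $r$-atomic quasi-complex representing measure $\mu = \sum_{a=1}^r \varrho_a \delta_{z^{(a)}}$ for $s$ with $\supp \mu = \cV(\cI)$. Since $\cV(\cI) \subseteq \RR^d$, the measure $\mu$ is signed, and
$$\rank M(n') \leq \rank M(\infty) = r = \card \cV(\cI) \leq (n'+1)^d,$$
which is precisely \eqref{eq:M3}. For the inertia bound \eqref{eq:M4}, I would invoke the Vandermonde factorisation $M(n') = V^T R V$ with $R = {\rm diag}(\varrho_1, \ldots, \varrho_r)$ supplied by Lemma \ref{lem:Feb22m1}, split $R = R_+ - R_-$ into its positive and negative diagonal parts, and observe that $V^T R_{\pm} V \succeq 0$ has rank at most $\card \supp \mu_{\pm}$; the standard fact that $M = A - B$ with $A, B \succeq 0$ forces $i_+(M) \leq \rank A$ and $i_-(M) \leq \rank B$ then gives $i_{\pm}(M(n')) \leq \card \supp \mu_{\pm}$.

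The main technical obstacle is really the correct application of Lemma \ref{lem:Mar28l1}: one must check that the single extension $M(n'+1)$ provided by \eqref{eq:poly1} generates a consistent infinite Hankel matrix with the stated finite rank, which is the substantial content of that lemma. Once this is in hand, separability of the $p^{(n'+1){\rm e}_j}$ gives diagonalisability of the shifts essentially for free, after which Theorem \ref{thm:Oct28fas} plus a direct Sylvester-type inertia argument completes the proof.
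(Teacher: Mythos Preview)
Your overall strategy matches the paper's: invoke Lemma \ref{lem:Mar28l1} to produce the infinite extension with $\rank M(\infty) = \rank M(n'd)$, establish diagonalisability of the shifts $\Theta_j$, and then apply Theorem \ref{thm:Oct28fas}. The two proofs diverge in how they handle the diagonalisability step and the inertia bound \eqref{eq:M4}.

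For diagonalisability, the paper argues that $\cI$ is radical by invoking Seidenberg's Lemma (each $p^{(n'+1){\rm e}_j}$ is a square-free univariate polynomial in $\cI$), and then uses $\card\cV(\cI)=r$ together with Theorem \ref{thm:June20t1} to conclude. Your argument is more direct: since $p^{(n'+1){\rm e}_j}\in\cI$, the operator $p^{(n'+1){\rm e}_j}(\Theta_j)$ is multiplication by an element of $\cI$, hence zero on $\cA$, and a matrix annihilated by a polynomial with simple roots is diagonalisable. This bypasses Seidenberg's Lemma entirely and is the cleaner route. One small point you gloss over: you assert $p^{(n'+1){\rm e}_j}(X)=\mathbf{0}$ in $\cC_{M(\infty)}$, but the hypothesis only gives this in $\cC_{M(n'+1)}$. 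The paper fills this gap explicitly by iterating Lemma \ref{lem:July19k1} on the successive extensions produced by Lemma \ref{lem:Mar28l1}; you should mention this step.

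For \eqref{eq:M4}, the paper applies \eqref{eq:INERTPOS} to $M(n'd+1)$ and then uses Cauchy's Interlacing Theorem to compare $i_{\pm}(M(n'))$ with $i_{\pm}(M(n'd+1))$. Your argument via the Vandermonde factorisation $M(n') = V^T R_+ V - V^T R_- V$ and the Weyl-type bound $i_{\pm}(A-B)\leq \rank A,\,\rank B$ for $A,B\succeq 0$ is equally valid and arguably more self-contained, since it avoids tracking the intermediate extensions.
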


\begin{proof}

We will use Lemma \ref{lem:Mar28l1} to obtain extensions $(M(n'+k))_{k=2}^{\infty}$ (determined by \eqref{eq:poly1}) such that 
$$\rank M(\infty) = \rank M(n'd).$$
Moreover, since the extension $M(n'+2)$ has column relations
$$
X^{(n'+1){\rm e}_j+{\rm e}_{j'}} = - \sum_{0 \leq |\lambda| \leq n'} p_{\lambda}^{(n'+1){\rm e}_j} X^{\lambda+{\rm e}_{j'}} \quad \quad {\rm for} \quad j,j'=1,\ldots,d,
$$
Lemma \ref{lem:July19k1} implies that $p^{(n'+1){\rm e}_j}(X) = {\bf 0} \in \cC_{M(n'+2)}$ for $j=1,\ldots, d$. One can inductively continue in this manner to obtain
\begin{equation}
\label{eq:INDUCTIVE}
p^{(n'+1){\rm e}_j}(X) = {\bf 0} \in \cC_{M(\infty)} \quad\quad {\rm for} \quad j=1,\ldots, d.
\end{equation}
In addition, we may use Lemma \ref{lem:June21variety} to obtain
\begin{equation}
\label{eq:VARIETYCHAIN}
\cV(M(n'd+1)) = \cV(M(\infty)).
\end{equation}

Let $\cI$, $\cA :=\RR[x_1, \ldots, x_d] / \cI$ and $\Theta_j: \cA \to \cA$ with $j=1,\ldots,d$ be the associated ideal, quotient space and shift operators associated with $(s_{\gamma})_{\gamma \in \NN_0^d}$, respectively, as in Section \ref{sec:full}. In view of \eqref{eq:INDUCTIVE},
$$\cV(\cI) \subseteq \{ w^{(1)}, \ldots, w^{((n'+1)^d)} \},$$
where 
$$w^{(a)} := (w_1^{(a)}, \ldots, w_d^{(a)}) \quad\quad {\rm for} \quad a = 1, \ldots, (n'+1)^d.$$
If $r := \rank M(\infty) = \rank M(n'd)$, then, in view of \eqref{eq:Feb15d4}, we have
$$\card \cV(\cI) \leq r.$$
We endeavour to show that the shift operators $\Theta_j: \cA \to \cA$ for $j=1,\ldots, d$ are diagonalisable. We claim that 
\begin{equation}
\label{eq:Radical}
\card \cV(\cI) = r,
\end{equation}
or, equivalently (see, e.g., Theorem 2.6 in \cite{Laurent}), $\cI$ is {\it radical}. The fact that $\cI$ is radical, and hence real radical, follows directly from Seidenberg's Lemma (see, e.g., Proposition 3.7.15 in \cite{KR}). Indeed, if we let 
$$\psi^{(n'+1){\rm e}_j}(x_j) = p^{(n'+1){\rm e}_j}(x) \quad\quad {\rm for}  \quad j=1, \ldots, d,$$
then 
$$\text{$\{ \psi^{(n'+1){\rm e}_j}(x_j) \}_{j=1}^d$ are square free univariate polynomials belonging to $\cI$}$$
and thus we have
$$\gcd(\psi, \psi') = 1,$$
where by $\gcd(\psi, \psi')$ we mean the monic greatest common divisor of $\psi$ and by $ \psi'$ we mean the formal derivative of $\psi$. Consequently, Seidenberg's Lemma (see, e.g., Proposition 3.7.15 in \cite{KR}) asserts that $\cI$ is radical. Thus, \eqref{eq:Radical} is in force and suppose, without loss of generality,
$$\cV(\cI) = \{ w^{(1)}, \ldots, w^{(r)} \} \subseteq \{ w^{(1)}, \ldots, w^{((n'+1)^d)} \}.$$
In view of assertion (ii) in Theorem \ref{thm:June20t1}, we have
$$\sigma(\Theta_j) = \{ \pi_j(w^{(1)}), \ldots, \pi_j(w^{(r)}) \} \quad \quad {\rm for} \quad j=1,\ldots, d$$
and since $\pi_j(w^{(1)}), \ldots, \pi_j(w^{((n'+1)^d)})$ are distinct for any $j \in \{ 1, \ldots, d \}$, we have that $\Theta_j$ is diagonalisable for $j=1,\ldots, d$. Thus, we may use Theorem \ref{thm:Oct28fas} to obtain an $r$-atomic signed representing measure $\mu = \sum_{a=1}^r \varrho_a \delta_{w^{(a)}}$ for $(s_{\gamma})_{0 \leq |\gamma| \leq n'd}$, and, in particular for $s$.

Finally, the lower bound in \eqref{eq:M3} was follows immediately from Lemma \ref{lem:Feb22supp}, while the upper bound in \eqref{eq:M3} follows immediately from \eqref{eq:Radical}, 
$$\cV(\cI) \subseteq \{ w^{(1)}, \ldots, w^{((n'+1)^d)} \}$$ 
and 
$$\card \{ w^{(1)}, \ldots, w^{((n'+1)^d)} \} = (n'+1)^d.$$
The lower bounds for $\card \supp \mu_{\pm}$ follow immediately from \eqref{eq:INERTPOS} and the Interlacing Theorem (see, e.g., Theorem 4.3.15 in \cite{HJ}). Indeed if we order the eigenvalues of the Hermitian matrices $M(n')$ and $M(n'd+1)$ by 
$$\lambda_1(M(n')) \leq \ldots \leq \lambda_{\ell(n',d)}(M(n'))$$
and
$$\lambda_1(M(n'd+1)) \leq \ldots \leq \lambda_{\ell(n'd+1,d)}(M(n'd+1)),$$
respectively, then the Interlacing Theorem asserts that
$$\lambda_k(M(n'd+1)) \leq \lambda_k(M(n)) \leq  \lambda_{k+\ell(n'd+1,d)-\ell(n',d)}(M(n'd+1)) \quad\quad {\rm for} \quad k=1,\ldots, \ell(n,d)$$
and hence
$$\card \supp \mu_- = i_-(M(n'd+1)) \geq i_-(M(n'))$$
and
$$i_+(M(n')) \leq i_+(M(n'd+1)) = \card \supp \mu_+.$$

\end{proof}

\section{Examples}
\label{sec:Ex}
In this section we will showcase a number of examples which utilise Theorem \ref{thm:Feb7m1} and Theorem \ref{thm:Oct11e1}. In particular, we will compute a signed representing measure for a positive definite truncated bisequence of degree $6$ which is known not to have a positive representing measure (\cite[Example 3.11]{Fialkow_FV}).

\begin{example}
\label{ex:ONEVAR}
The univariate truncated sequence $s = (s_{\gamma})_{\gamma=0}^6$ given by
$$M(3) = \begin{pmatrix} 0 & 0 & 0 & 1 \\ 0 & 0 & 1 & 0 \\
0 & 1 & 0 & -2 \\
1 & 0 & -2 & 0 \end{pmatrix}$$
does not have a $4$-atomic signed representing measure. Indeed, Lemma \ref{lem:Mar31e3} implies that every rank preserving extension $M(4)$ must be of the form
$$M(4) = \begin{pmatrix} 0 & 0 & 0 & 1 & 0 \\ 0 & 0 & 1 & 0 & -2 \\
0 & 1 & 0 & -2 & 0 \\ 
1 & 0 & -2 & 0 & s_9 \\ 
0 & -2 & 0 & s_9 & s_{10} 
\end{pmatrix},$$
where 
\begin{align*}
s_9 =& \; \alpha_0 +4 \quad\quad {\rm for} \quad \alpha_0 \in \RR \\
\tag*{and} \\
s_{10} =& \; 0.
\end{align*} 
Moreover, $M(4)$ only has the column relation
$$p(X) = {\bf 0 } \in \RR^5 \quad\quad {\rm for} \quad p(x) = x^4+2x^2 - \alpha_0 \quad {\rm with } \quad \alpha_0 \in \RR.$$
It is ready checked that the zero set of $p(x)$ is given by
$$\mathcal{Z}(p) = \{ \pm \sqrt{-1 + \tilde{\alpha}_0 }, \pm \sqrt{ -1 - \tilde{\alpha}_0 } \} \quad\quad {\rm for} \quad \tilde{\alpha}_0 := \sqrt{\alpha_0+1}.$$
Thus, for any choice of $\alpha_0 \in \RR$, $p(x)$ has at least two distinct nonreal complex zeros. Thus, Corollary \ref{cor:Nov1} implies that $s$ cannot have a $(2,2)$ representing measure.

Theorem \ref{thm:Feb7m1} asserts corresponding to any choice of $\alpha_0 \in \RR$ such that $\card \mathcal{Z}(p) =4$ (and hence the shift matrix $T_1$ is diagonalisable), there exists of a $4$-atomic quasicomplex measure $\mu = \sum_{a=1}^4 \varrho_a \delta_{z_a}$ for $s$ with $\card \supp \mu \cap (\CC \setminus \RR) \geq 2$. For instance, if $\alpha_0 = 3$, then
$$\mathcal{Z}(p) = \{ 1, -1, i\sqrt{3}, -i\sqrt{3} \}$$
and if we let $z_1 = 1, z_2 = -1, z_3 i \sqrt{3}$ and $z_4 = - i \sqrt{3}$, then
\begin{align*}
\begin{pmatrix} \varrho_1 \\ \varrho_2 \\ \varrho_3 \\ \varrho_4 \end{pmatrix}
 =& \; \begin{pmatrix} 1 & z_1 & z_1^2 & z_1^3 \\
1 & z_2 & z_2^2 & z_2^3 \\
1 & z_3 & z_3^2 & z_3^3 \\
1 & z_4 & z_4^2 & z_4^3
\end{pmatrix}^{-T} 
M(4) \begin{pmatrix} 1 \\ 0 \\ 0 \\ 0 \end{pmatrix} \\
=& \; \begin{pmatrix} \frac{1}{8} \\[6pt] -\frac{1}{8} \\[6pt] \frac{i \sqrt{3}}{24} \\[6pt] -\frac{i \sqrt{3}}{24} \end{pmatrix}.
\end{align*}

\end{example}

We will now use Theorem \ref{thm:Feb7m1} to compute an $r$-atomic quasicomplex representing measure for a truncated multisequence which is positive definite and is known {\it not to have a positive representing measure} \cite[Example 3.11]{Fialkow_FV}. 

\bigskip

\noindent {\bf Warning:} For the convenience of the reader, we will use $x, y$ and $X, Y$ in place of $x_1, x_2$ and $X_1, X_2$, respectively, in the following example.

\bigskip

\begin{example}
\label{ex:QC}
Let $s = (s_{\gamma_1,\gamma_2})_{0 \leq \gamma_1+\gamma_2 \leq 6}$ be the real-valued truncated multisequence given by
$$M(3) = \begin{pmatrix} M(2) & B \\ B^T & C \end{pmatrix} \succeq 0,$$
where
$$
M(2) :=  \begin{pmatrix} 14 & \frac{7}{2} & -\frac{67}{8} & \frac{79}{4} & \frac{1055}{16} & \frac{18195}{64} \\[6pt]
\frac{7}{2} & \frac{79}{4} & \frac{1055}{16} & -\frac{67}{8} & -\frac{1935}{32} & -\frac{-43115}{128}  \\[6pt]
-\frac{67}{8} & \frac{1055}{16} & \frac{18195}{64} & -\frac{1935}{32} & -\frac{43115}{128} & -\frac{926695}{512}  \\[6pt]
\frac{79}{4} & -\frac{67}{8} & -\frac{1935}{32} & \frac{1055}{16} & \frac{18195}{64} & \frac{336151}{256} \\[6pt]
\frac{1055}{16} & -\frac{1935}{16} & - \frac{43115}{128} & \frac{18195}{64} & \frac{336151}{256} & \frac{6407195}{1024} \\[6pt]
\frac{18195}{64} & -\frac{43115}{128} & -\frac{926695}{512} & \frac{336151}{256} & \frac{6407195}{1024} & \frac{124731423}{4096}
\end{pmatrix}, 
$$
$$B := \begin{pmatrix} 
-\frac{67}{8} & -\frac{1935}{32} & -\frac{43115}{128} & -\frac{926695}{512} \\[6pt]
\frac{1055}{16} & \frac{18195}{64} & \frac{336151}{256} & \frac{6407195}{1024} \\[6pt]
\frac{18195}{64} & \frac{336151}{256} & \frac{6407195}{1024} & \frac{124731423}{4096} \\[6pt]
-\frac{1935}{32} & -\frac{43115}{128} & -\frac{926695}{512} & -\frac{19736547}{2048} \\[6pt]
-\frac{43115}{128} & -\frac{926695}{512} & -\frac{19736547}{2048} & -\frac{419176415}{8192} \\[6pt] 
-\frac{926695}{512} & -\frac{19736547}{2048} & -\frac{419176415}{8192} & -\frac{8894873563}{32768}
\end{pmatrix}
$$
and
$$C:= \begin{pmatrix} \frac{18195}{64} & \frac{336151}{256} & \frac{6407195}{1024} & \frac{124731423}{4096} \\[6pt] 
\frac{336151}{256} & \frac{6407195}{1024} & \frac{124731423}{4096} & \frac{2469281827}{16384} \\[6pt] 
\frac{6407195}{1024} & \frac{124731423}{4096} & \frac{2469281827}{16384} & \frac{49568350247}{65536} \\[6pt]
\frac{124731423}{4096}  & \frac{2469281827}{16384} & \frac{49568350247}{65536} & \frac{1006568996907}{262144}
\end{pmatrix}.
$$
A basis for $\cC_{M(3)}$ is given by 
$$\cB = \{ 1, X, Y, X^2, XY, Y^2, X^2 Y, XY^2 \}$$
and $M(3)$ has the column relations
\begin{equation}
\label{eq:CC1}
X^3 =Y \in \cC_{M(3)}
\end{equation}
and
\begin{equation}
\label{eq:CC2}
Y^3 = 3 X + \frac{45}{4} Y - 13 X^2 + \frac{65}{4} X Y - \frac{13}{4} Y^2 - 22 X^2 Y +\frac{35}{4} XY^2 \in \cC_{M(3)}.
\end{equation}
As observed by Fialkow in \cite{Fialkow_FV}[Example 4.18], since $M(3)$ is {\it recursively determinate} (see \cite{Fialkow_FV}), any possible positive semidefinite extension $M(4)$ must be given by
\begin{equation}
\label{eq:X4}
X^4 = XY
\end{equation}
and
\begin{equation}
\label{eq:Y4}
Y^4 = 3XY + \frac{45}{4}Y^2 - 13X^2 Y + \frac{65}{4} XY^2 - \frac{13}{4} Y^3 - 22X^2 Y^2 + \frac{35}{4}XY^3.
\end{equation}
However, the extension $M(4)$ guided by \eqref{eq:X4} and \eqref{eq:Y4} is not positive semidefinite and hence cannot have a {\it positive representing measure}.

We will now use Theorem \ref{thm:Feb7m1} to compute a $12$ atomic quasicomplex representing measure for $s$. Let
$$
p^{(3,0)}(x,y) = x^3 + \sum_{0 \leq a+b \leq 2} p^{(3,0)}_{ab} \, x^a y^b = x^3-y
$$
and
\begin{align*}
p^{(0,4)}(x,y) =& \; y^4 + \sum_{0 \leq a+b \leq 3} p^{(0,4)}_{ab}  \\
=& \; y^4+\left(\frac{47163262297569}{51374768128} \right)x-\left(\frac{3691292821051}{3022045184} \right) y \\
-& \; \left(\frac{65357522194769}{51374768128}\right)x^2+\left(\frac{10195141857683}{6421846016}\right)xy  \\
-& \; \left(\frac{76486966005}{200682688}\right)y^2+x^3+\left(\frac{169021630829}{755511296}\right)x^2y+10xy^2-y^3
\end{align*}
Then one can check that it is possible to consistently define an extension $M(4)$ via the column relations
$$X^4 = -\sum_{0 \leq a+b \leq 3} p_{ab}^{(4,0)} X^{a+1} Y^b$$
and
$$Y^4 = -\sum_{0 \leq a+b \leq 3} p_{ab}^{(0,4)} X^a Y^{b+1},$$
in which case $\rank M(4) = 11$, since the only four column relations are
\begin{align*}
X^3 =& \; Y \\
X^4 =& \; XY \\
XY^3 =& \; Y^2 \\
Y^4 =& \; -\sum_{0 \leq a+b \leq 3} p^{(0,3)}_{ab} X^a Y^{b+1}.
\end{align*}

Lemma \ref{lem:Mar28l1} with $n' = 2$ implies that $M(4)$ has successive extensions $(M(3+a))_{a=2}^{\infty}$ such that
$\rank M(\infty) = \rank M(6) = \rank M(5).$
Let $\cI$, $\cA := \RR[x_1, \ldots, x_d] / \cI$ and $\Theta_j: \cA \to \cA$ with $j=1,\ldots,d$ be the associated ideal, quotient space and shift operators associated with $(s_{\gamma})_{\gamma \in \NN_0^d}$, respectively, as in Section \ref{sec:full}. One can check that $\rank M(5) = 12$. Moreover,
$$\cI = \langle p^{(4,0)}(x,y), p^{(0,4)}(x,y) \rangle$$
satisfies $\card \cV(\cI) \leq \rank M(\infty) = 12$ and a basis for $\cC_{M(5)}$ is given by
$$
\cB = \{ X^a Y^b \}_{0 \leq a + b \leq 2} \bigcup \{  X^2Y,  XY^2, Y^3, X^2 Y^2, X^2 Y^3, XY^4 \}. 
$$
Since $\cI \cap \RR[{\bf x}] = \langle \varphi(x) \rangle$ and $\cI \cap \RR[ { \bf y} ] = \langle \psi(y) \rangle$, where

\begin{align*}
\varphi(x) :=& \; 51374768128x^{12}-51374768128x^9+513747681280x^7-19580663297280x^6 \\
&+ \; 11493470896372x^5+81561134861464x^4-62700603189739x^3 \\ &- \; 65357522194769x^2+47163262297569x
\end{align*}
and
\begin{align*}
\psi(y) :=& \; 135596857365312460266673014833152y^{12} \\
&- \;406790572095937380800019044499456y^{11} \\
&- \;15463486326449173016863163342127104y^{10} \\ &- \;186521952295893441652500941095567360y^9 \\
&+ \; 59019457490413813459703155070657888256y^8 \\
&+ \; 180596151418034206029765192097508884480y^7 \\
&- \; 5991020954875276874502839619015040745472y^6\\
&- \; 6974988944845260455200342338544544855744y^5\\
&+ \; 185832155803676288347648245170522695494448y^4\\
&- \;236371187104770223823228997175985270647387y^3\\
&- \; 314733260415193681665802613874461583211034y^2\\
&+ \; 104908701893148235312716198616535564271009y
\end{align*}
One can check that $\gcd(\varphi, \varphi') = \gcd(\psi, \psi') = 1$ and hence $\varphi$ and $\psi$ are squarefree. Thus, we can use, e.g., Corollary 3.7.16 in \cite{KR} to obtain
$$\sqrt{\cI} = \cI + \langle {\rm sqfree}(\varphi), {\rm sqfree}(\psi) \rangle = \cI +   \langle \varphi, \psi \rangle = \cI$$
and hence $\cI$ is radical. Consequently, the shift operators $\Theta_j: \cA \to \cA$ are diagonalisable which is equivalent to the shift matrices
$$T_j =  M_{\cB}(5)^{-1} M_{\cB, \cB + {\rm e}_j}(6) \quad\quad {\rm for} \quad j=1,\ldots, d$$
being diagonalisable. Thus, Theorem \ref{thm:Feb7m1} asserts that $s$ has a $12$-atomic quasicomplex representing measure $\mu = \sum_{a=1}^{12} \varrho_a \delta_{(u_a, v_a)},$
where $$\{ (u_a, v_a) \}_{a=1}^{12} = \cV(\cI)$$
and $\varrho_1, \ldots, \varrho_{12}$ are given by \eqref{eq:WEIGHTS}.

Unfortunately, $\cV(\cI)$ seems too cumbersome to be computed in closed form. Nevertheless, using Maple, one can obtain the following $16$ digit approximations for $\{ (u_a, v_a) \}_{a=1}^{12},$ namely
\begin{align*}
(u_1, v_1) =& \;  (0,0) \\
(u_2, v_2) \approx & \; (0.6585688908335371, 0.2856298787956733) \\
(u_3, v_3) \approx& \; ( 1.367436555562332, 2.556946004386548) \\
(u_4, v_4) \approx& \; (1.698286037615674, 4.898154923194823) \\
(u_5, v_5) \approx& \; (1.995426735187136, 7.945246215103228) \\
(u_6, v_6) \approx& \; ( 1.161581775737723+2.514035960495372i, \\ & \; -20.45761353392138-5.713297928140102i) \\
(u_7, v_7) \approx& \; (-1.369713057599855+2.577649039498679i, \\ & \; 24.73251126202266-2.618718696770263i), \\
 (u_8,v_8) \approx& \; (-0.9983878163914217, -0.9951712423919440) \\
 (u_9, v_9) \approx& \; (-2.001787809774772, -8.021472900594320 )  \\
 (u_{10}, v_{10}) \approx& \; (-2.303280029308220, -12.21912833469656) \\
 (u_{11}, v_{11}) \approx& \; (-1.369713057599855-2.577649039498679i, \\ & \; 24.73251126202266+2.618718696770263i) \\
 (u_{12}, v_{12}) =& \; (1.161581775737723-2.514035960495372i, \\ & \; -20.45761353392138+5.713297928140102i)
\end{align*}
in which case we can use \eqref{eq:WEIGHTS} to obtain $16$-digit approximations for the weights $\varrho_1, \ldots, \varrho_{16}$ given by
\begin{align*}                                               
   \varrho_1  \approx& \; 0.75251705199900  \\                         
     \varrho_2  \approx& \;   8.157396366263352 \\                                                  
     \varrho_3  \approx& \;    1.23049608230004 \\
        \varrho_4  \approx& \;  -0.19997425644493\\
        \varrho_5  \approx& \;  1.049869493443501 \\
        \varrho_6  \approx& \;  -4.4036738951\cdot 10^{-7}+0.1181009400413 \cdot 10^{-5}i \\
       \varrho_7  \approx& \;   4.91682149450\cdot 10^{-7}+5.3431516336\cdot 10^{-8}i\\
        \varrho_8 \approx& \;   1.011062561963828 \\
        \varrho_9  \approx& \;    1.003988449908174\\
        \varrho_{10}  \approx& \;  0.9946441479402077 \\                                                 
       \varrho_{11}  \approx& \;   4.91682148481\cdot 10^{-7}-5.3431516336\cdot 10^{-8}i \\
       \varrho_{12}  \approx& \;   -4.4036739099\cdot 10^{-7}-0.1181009399657\cdot 10^{-5}i.
\end{align*}

\end{example}

\begin{example}[Example \ref{ex:QC} revisited]
\label{ex:SIGNED}
Let $s = (s_{\gamma_1,\gamma_2})_{0 \leq \gamma_1+\gamma_2 \leq 6}$ be the real-valued truncated multisequence defined in Example \ref{ex:QC}. We saw in Example \ref{ex:QC} has a $12$ atomic quasicomplex representing measure, where four of the atoms are nonreal complex numbers. We shall see that $s$ has a signed representing measure with $15$ atoms.

We will now use Theorem \ref{thm:Feb7m1} to compute a $15$ atomic signed representing measure for $s$. Let
\begin{align*} 
p^{(0,3)}(x,y) =& \; y^3 +\sum_{0 \leq a+b \leq 2} p^{(0,3)}_{ab} \, x^a y^b \\
=& \; y^3-3x-\left(\frac{45}{4}\right)y+13x^2-\left(\frac{65}{4}\right)xy+\left(\frac{13}{4}\right)y^2+22x^2y-\left(\frac{35}{4}\right)xy^2
\end{align*}
and
\begin{align*}
p^{(5,0)}(x,y) =& \; x^5 + \sum_{0 \leq a+b \leq 4} p^{(5,0)}_{ab}  \\
=& \; x^5+\left(\frac{4367271}{897715}\right)x-\left(\frac{12131209}{3590860}\right)x^2-\left(\frac{3792527}{718172}\right)x^3+x^4
\end{align*}
Then one can check that it is possible to consistently define an extension $M(4)$ via the column relation
$$Y^4 = -\sum_{0 \leq a+b \leq 3} p_{ab}^{(0,4)} X^a Y^{b+1}.$$
Next, one can check that it is possible to consistently define an extension $M(5)$ via the column relations
$$X^5 = -\sum_{0 \leq a+b \leq 3} p_{ab}^{(5,0)} X^{a} Y^b$$
and
$$Y^5 = -\sum_{0 \leq a+b \leq 3} p_{ab}^{(0,4)} X^a Y^{b+2},$$
in which case $\rank M(5) = 14$, with seven column relations given by
\begin{align}
Y^3 =& \; X, \label{eq:1} \\
XY^3 =& \; X^2, \label{eq:2} \\
Y^4 =& \; XY, \label{eq:3}\\
X^5 =& \; -\sum_{0 \leq a+b \leq 3} p_{ab}^{(5,0)} X^{a} Y^b, \label{eq:4}\\
X^2 Y^3 =& \; X^3, \label{eq:5}\\
XY^4 =& \; X^2 Y, \label{eq:6}\\ 
Y^5 =& \;  X Y^2. \label{eq:7}
\end{align}
The column relations \eqref{eq:1}-\eqref{eq:7} can be used to consistently define an extension $M(6)$ with $\rank M(6) = 15$, with 13 column relations by \eqref{eq:1}-\eqref{eq:7} and 
\begin{align*}
X^6 =& \;  -\sum_{0 \leq a+b \leq 3} p_{ab}^{(5,0)} X^{a+1} Y^b,  \\
X^5Y  =& \; -\sum_{0 \leq a+b \leq 3} p_{ab}^{(5,0)} X^{a} Y^{b+1},  \\
X^3 Y^3 =& \; X^4, \\
X^2 Y^4 =& \; X^3 Y, \\
X Y^5 =& \; X^2 Y^2, \\
Y^6 =& \; X Y^3. 
\end{align*}

Finally, the above column relations can be used to consistently define an extension $M(7)$ such that $\rank M(7) = 15$, i.e., $M(6)$ has a rank preserving extension. Theorem \ref{lem:Mar28l1} with $n' = 6$ implies that $M(7)$ has successive extensions $(M(6+a))_{a=2}^{\infty}$ such that
$\rank M(\infty) = \rank M(7) = \rank M(6) = 15.$
Let $\cI$, $\cA := \RR[x_1, \ldots, x_d] / \cI$ and $\Theta_j: \cA \to \cA$ with $j=1,\ldots,d$ be the associated ideal, quotient space and shift operators associated with $(s_{\gamma})_{\gamma \in \NN_0^d}$, respectively, as in Section \ref{sec:full}. One can check that $\rank M(5) = 12$. Moreover,
$$\cI = \langle p^{(5,0)}(x,y), p^{(0,3)}(x,y) \rangle$$
satisfies $\card \cV(\cI) \leq \rank M(\infty) = 15$ and a basis for $\cC_{M(6)}$ is given by
\begin{align*}
\cB =& \; \{ 1, X, Y, X^2, XY, Y^2, X^3, X^2 Y, XY^2, X^4, X^3 Y \\
& \; \;\;  X^2 Y^2, X^4Y, X^3Y^2, X^4Y^2 \}. 
\end{align*}
Since $\cI \cap \RR[{\bf x}] = \langle \varphi(x) \rangle$ and $\cI \cap \RR[ { \bf y} ] = \langle \psi(y) \rangle$, where

$$
\varphi(x) := p^{(5,0)}(x,y)
$$
and
\begin{align*}
\psi(y) :=& \; 1650467269068800y^{15} +41261681726720000y^{14} \\
&\;+48554692101624800y^{13}-5493701075197171200y^{12}\\
& \; -28535273672045925930y^{11} +218849974650321391515y^{10} \\& \;+1568658618037258028979y^9-2053760549763388536126y^8 \\
& \;-26616254927163276895452y^7-23447661687066643695517y^6 \\
&\;+82720466156415952711103y^5+130125215370817292369264y^4\\&\;+35790490895893802856564y^3-22834780799740969878000y^2\\
& \;-9853069670890050758400y,
\end{align*}
one can check that $\gcd(\varphi, \varphi') = \gcd(\psi, \psi') = 1$ and hence $\varphi$ and $\psi$ are squarefree. Thus, we can use, e.g., Corollary 3.7.16 in \cite{KR} to obtain
$$\sqrt{\cI} = \cI + \langle {\rm sqfree}(\varphi), {\rm sqfree}(\psi) \rangle = \cI +   \langle \varphi, \psi \rangle = \cI$$
and hence $\cI$ is radical. Consequently, the shift operators $\Theta_j: \cA \to \cA$ are diagonalisable which is equivalent to the shift matrices
$$T_j =  M_{\cB}(6)^{-1} M_{\cB, \cB + {\rm e}_j}(7) \quad\quad {\rm for} \quad j=1,\ldots, d$$
being diagonalisable. Thus, Theorem \ref{thm:Feb7m1} asserts that $s$ has a $15$-atomic quasicomplex representing measure $\mu = \sum_{a=1}^{15} \varrho_a \delta_{(u_a, v_a)},$
where $$\{ (u_a, v_a) \}_{a=1}^{15} = \cV(\cI)$$
and $\varrho_1, \ldots, \varrho_{15}$ are given by \eqref{eq:WEIGHTS}.

We have already observed that
\begin{equation}
\label{eq:psi}
\psi(x) = x\left(x^4+\left(\frac{4367271}{897715}\right)-\left(\frac{12131209}{3590860}\right)x-\left(\frac{3792527}{718172}\right)x^2+x^3 \right)
\end{equation}
is square-free and hence has no repeated roots. Moreover, it is easy to check that the quartic in \eqref{eq:psi} has four real distinct zeros, say $u_1 = 0, u_2 \ldots, u_5$. One can then obtain expressions for $u_2, \ldots, u_5$ in terms of radicals and check that the cubic $p^{(0,3)}(u_a, y)$ has $3$ distinct real roots (e.g., via showing that the discriminant is positive) for $a=1,\ldots,5$. For the convenience of the reader, we have only given closed form expressions for $(u_1, v_1), (u_2, v_2)$ and $(u_3, v_3)$ and $16$ digit approximations for $\{ (u_a, v_a) \}_{a=4}^{15},$ namely
%
%
%
%
%
%
%
%
\begin{align*}
(u_1, v_1) =& \;  (0,0) \\
(u_2, v_2) =& \; \left(0, \frac{-13+\sqrt{889}}{8} \right) \\
(u_3, v_3) =& \; \left( 0, \frac{-13-\sqrt{889}}{8} \right) \\
(u_4, v_4) \approx& \; (0.7624922873183873, 0.4339289106669871) \\
(u_5, v_5) \approx& \; (0.7624922873183873, 5.285799329792782) \\
(u_6, v_6) \approx& \; ( 0.7624922873183873, -2.297920726423880 ) \\
(u_7, v_7) \approx& \; (1.945840648481577, 6.400158217920288 ), \\
 (u_8,v_8) \approx& \; (1.945840648481577,8.202371056743452) \\
 (u_9, v_9) \approx& \; (1.945840648481577, -0.8264236004499402 )  \\
 (u_{10}, v_{10}) \approx& \; (-1.455383893896135, -0.6507966134719535) \\
 (u_{11}, v_{11}) \approx& \; (-1.455383893896135, -4.542562147613256  ) \\
 (u_{12}, v_{12}) =& \; (-1.455383893896135, -10.79125031050597) \\
 (u_{13}, v_{13}) =& \; (-2.252949041903829, -0.5871708139396136) \\
 (u_{14}, v_{14}) =& \; (-2.252949041903829, -10.05498263445494) \\
 (u_{15}, v_{15}) =& \; (-2.252949041903829, -12.32115066826395)
\end{align*}
in which case we can use \eqref{eq:WEIGHTS} to obtain $16$-digit approximations for the weights $\varrho_1, \ldots, \varrho_{15}$ given by
\begin{align*}                                               
   \varrho_1  \approx& \; 1.1485082102759 \\                         
     \varrho_2  \approx& \;   0.67540495615813 \\                                                  
     \varrho_3  \approx& \;    0.13925685438668 \\
        \varrho_4  \approx& \;  8.69674628194238\\
        \varrho_5  \approx& \;  -0.27487370942507 \\
        \varrho_6  \approx& \;  -0.31699335231750\\
       \varrho_7  \approx& \;   0.45867407020225\\
        \varrho_8 \approx& \;   0.78731658438486 \\
        \varrho_9  \approx& \;    0.013888883967842 \\
        \varrho_{10}  \approx& \;  0.21808644955274 \\                                                 
       \varrho_{11}  \approx& \;   0.96056711695518 \\
       \varrho_{12}  \approx& \;   -.06450309890826 \\
       \varrho_{13}  \approx& \;  -0.013834083055744 \\                                                 
       \varrho_{14}  \approx& \;   0.74414479745023 \\
       \varrho_{15}  \approx& \;   0.827610038430212 
\end{align*}

\end{example}

%
%
%
%
%
%
%
%

%
%

\appendix
\section{Background on Pontryagin spaces}
\label{app:P}
Let $\kappa$ be a nonnegative integer. A {\it pre-Pontryagin space with $\kappa$-negative squares} is a complex (or real) vector space $\mathcal{V}$ together with a complex-valued (or real-valued) Hermitian form $( \cdot, \cdot )$ on $\mathcal{V} \times \mathcal{V}$ that satisfies the following properties:
\begin{enumerate}
\item[(i)] There exists a {\it negative subspace} $\mathcal{N} \subseteq \mathcal{V}$ with $\dim \mathcal{N} = \kappa$, i.e., there exists a $\kappa$-dimensional subspace $\mathcal{N} \subseteq \mathcal{V}$ such that
$$( v, v ) < 0 \quad {\rm for} \quad v \in \mathcal{N}.$$
\item[(ii)] $\cN$ is maximal in the sense that any other negative subspace $\cM\subseteq \cV$ satisfies 
$$\dim \cM \leq \kappa.$$
\end{enumerate}
It is readily checked that if $\cN$ is a negative subspace with dimension $\kappa$, then 
$$\cP := \cN^{\perp} = \{ v \in \cV: \text{$(v, w) = 0$ for all $w \in \cN$} \}$$
is a pre-Hilbert space with respect to the Hermitian form $( \cdot , \cdot )$, i.e.,
\begin{align*}
(v, w) =& \; \overline{(w, v) } \\
(c \, v, w) =& \; c (v, w) \\
\tag*{and} \\
(v+w, u) =& \; (v, u) + (w, u) \quad {\rm for} \quad \text{$c \in \CC \; {\rm ( or} \;  \RR {\rm )}$ and $u, v, w \in \cV.$}
\end{align*}

If we consider the Hilbert space completion $\overline{\cP}$ of $\cP$, then 
\begin{equation}
\label{eq:Oct27gn3}
\Pi_{\kappa} = \overline{\cP} \oplus \cN
\end{equation}
is a {\it Pontryagin space with $\kappa$-negative squares}. The decomposition \eqref{eq:Oct27gn3} is, of course, not unique.

It turns out that the definition of $\Pi_{\kappa}$ in \eqref{eq:Oct27gn3} does not depend on the choice of $\cN$. In view of \eqref{eq:Oct27gn3}, every $v \in \Pi_{\kappa}$ can be written as $v_+ + v_-$, where $v_+ \in \overline{\cP}$ and $v_- \in \cN$. If we let,
\begin{equation}
\label{eq:Oct27hj3}
[ v, w ] = (v_+, w_+) - (v_-, w_-) \quad {\rm for} \quad v, w \in \Pi_{\kappa},
\end{equation}
then $[ \cdot, \cdot ]$ is a usual inner product. If $v = w$, then \eqref{eq:Oct27hj3} can be written as
\begin{equation}
\label{eq:Oct27hj4}
[ v, v] = (v, v) - 2 (v_-, v_-), \quad {\rm for} \quad v \in \Pi_{\kappa}.
\end{equation}
Thus, $\Pi_{\kappa}$ may also be regarded as a Hilbert space with norm 
\begin{equation}
\label{eq:Oct27ew2}
\| v \| := [ v, v ] = \sqrt{ (v_+, v_+) - (v_-, v_-) }.
\end{equation} 
It turns out that all the norms corresponding different decompositions in \eqref{eq:Oct27gn3} are equivalent. Thus, we may mention continuity and convergence in $\Pi_{\kappa}$ without referencing a particular norm. It can be readiy checked that an analogue of the Cauchy-Schwarz inequality hold, i.e., 
\begin{equation}
\label{eq:Oct27uu3}
|( v, w) | \leq \| v \| \| w \| \quad {\rm for} \quad v, w \in \Pi_{\kappa}.
\end{equation}
It follows from \eqref{eq:Oct27uu3} that $(\cdot, \cdot)$ is continuous the first and second argument.

\begin{defn}
\label{def:June20d1}
Let $\Pi_{\kappa}$ and $\wt{\Pi}_{\kappa'}$ be Pontryagin spaces with $\kappa$ and $\kappa'$ negative squares, respectively. If $( \cdot, \cdot)_{\Pi_{\kappa}}$ and $(\cdot, \cdot)_{\Pi_{\kappa}'}$ are the Hermitian forms associated with $\Pi_{\kappa}$ and $\wt{\Pi}_{\kappa'}$, respectively, then we will call  $\Pi_{\kappa}$ and $\wt{\Pi}_{\kappa'}$ {\it isomorphic} if there exists a surjective linear map $U: \Pi_{\kappa} \to \wt{\Pi}_{\kappa'}$ such that
\begin{equation}
\label{eq:June20e1}
( v, w )_{ \Pi_{\kappa}} = (Uv ,Uw)_{\Pi_{\kappa'}}.
\end{equation}
\end{defn}

\begin{rem}
\label{rem:June20r1}
It follows immediately from \eqref{eq:June20e1} that if $\Pi_{\kappa}$ and $\wt{\Pi}_{\kappa'}$ are isomorphic, then $\kappa = \kappa'$. Indeed, if we write 
$$\Pi_{\kappa} = \cP \oplus \cN$$ 
and 
$$\wt{\Pi}_{\kappa'} = \wt{\cP} \oplus \wt{\cN},$$
then
$v \in \cN \Longleftrightarrow U v \in \wt{\cN}$. If, in addition, $\Pi_{\kappa}$ and $\wt{\Pi}_{\kappa'}$ are finite dimensional, i.e., $\dim \cP < \infty$ and $\dim \wt{\cP}< \infty$, then 
\begin{equation}
\label{eq:June20p1}
\dim \Pi_{\kappa} = \dim \Pi_{\kappa'} \Longleftrightarrow \dim \cP = \dim \wt{\cP}.
\end{equation}
\end{rem}

\begin{thm}
\label{thm:Pontryagin1}
Let $\Pi_{\kappa}$ be a finite dimensional Pontryagin space with $\kappa$-negative squares endowed with the Hermitian form $( \cdot , \cdot)$ and put $r = \dim \Pi_k$. Then there exists a unique invertible Hermitian matrix $H \in \CC^{r \times r}$ such that 
\begin{equation}
\label{eq:June20RRT}
(v, w) = \langle Hv, w \rangle,
\end{equation} 
where $\langle \cdot, \cdot \rangle$ denotes the usual inner product on $\CC^r$.
\end{thm}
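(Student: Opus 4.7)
The plan is to reduce this to elementary finite-dimensional linear algebra by fixing any basis of $\Pi_{\kappa}$ and reading off $H$ from the Gram matrix of the Hermitian form. Concretely, I would choose an ordered basis $\{e_1,\ldots,e_r\}$ of $\Pi_\kappa$ (identifying $\Pi_\kappa$ with $\CC^r$ via coordinates in this basis) and define $H = (h_{ij})_{i,j=1}^r$ by $h_{ij} := (e_j, e_i)$. The Hermitian symmetry of $(\cdot,\cdot)$ immediately gives $H^* = H$. Then for $v = \sum_i v_i e_i$ and $w = \sum_j w_j e_j$, a direct expansion yields
$$(v,w) = \sum_{i,j} v_i \overline{w_j}\,(e_i,e_j) = \sum_{i,j} h_{ji} v_i \overline{w_j} = \langle Hv, w\rangle,$$
which verifies \eqref{eq:June20RRT}.

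The main substantive point will be invertibility of $H$, which amounts to showing that the form $(\cdot,\cdot)$ on $\Pi_\kappa$ is non-degenerate. For this I would use the decomposition $\Pi_\kappa = \cP \oplus \cN$ from \eqref{eq:Oct27gn3} (in finite dimensions the completion $\overline{\cP}$ coincides with $\cP$). Any $v \in \Pi_\kappa$ decomposes uniquely as $v = v_+ + v_-$ with $v_\pm$ in the respective components, and since these components are mutually orthogonal with $(\cdot,\cdot)$ positive definite on $\cP$ and negative definite on $\cN$, the conditions $(v, v_+)=0$ and $(v, v_-)=0$ force $(v_+,v_+)=0$ and $(v_-,v_-)=0$, hence $v_+ = v_- = 0$. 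Equivalently, if $(v,w)=0$ for every $w\in\Pi_\kappa$ then $v=0$, so the matrix $H$ has trivial kernel and is therefore invertible.

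Uniqueness is the last and easiest step: if $H_1, H_2 \in \CC^{r\times r}$ both satisfy \eqref{eq:June20RRT}, then $\langle (H_1 - H_2) v, w\rangle = 0$ for all $v,w \in \CC^r$, which forces $H_1 = H_2$ by specialising $w$ to range over the standard basis. I do not anticipate any real obstacles; the only mildly delicate point is to be careful that the argument for non-degeneracy uses the orthogonality of $\cP$ and $\cN$ correctly, and to note that the statement of the theorem is of course relative to the choice of identification $\Pi_\kappa \cong \CC^r$ (a different basis produces a congruent matrix $S^*HS$, which is the expected behaviour of a Gram matrix under change of basis).
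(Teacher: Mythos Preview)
Your argument is correct: the Gram-matrix construction gives a Hermitian $H$ satisfying \eqref{eq:June20RRT}, the decomposition \eqref{eq:Oct27gn3} yields non-degeneracy (hence invertibility), and uniqueness is immediate from the non-degeneracy of the standard inner product. The paper itself does not give a proof but simply refers the reader to \cite[p.~8]{GLR}, so your write-up is in fact more complete than what the paper provides; your closing remark about basis-dependence (that a change of basis replaces $H$ by a congruent $S^*HS$) is a worthwhile clarification of what ``uniqueness'' means here.
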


\begin{proof}
See, e.g., \cite[p.~8]{GLR}
\end{proof}

\bibliographystyle{amsplain}
\bibliography{Bib}

\end{document}